\documentclass[11pt]{amsart}


 \usepackage[all]{xy}
\usepackage[utf8]{inputenc}
\usepackage[T1]{fontenc}
\usepackage[paper=a4paper, marginpar=2.4cm]{geometry}
\usepackage{amssymb, amsmath}
\usepackage{mathtools}
\usepackage{graphicx}
\usepackage{xcolor}
\usepackage[french, english]{babel}
\usepackage{mathabx}
\usepackage{enumerate, hyperref}

 

\newcommand{\cc}{\mathbb{C}}

\newcommand{\re}{\mathbb{R}}
\newcommand{\dd}{\mathbb{D}}
\newcommand{\zz}{\mathbb{Z}}
\newcommand{\nn}{\mathbb{N}}
\newcommand{\pp}{\mathbb{P}}
\newcommand{\e}{\varepsilon}

\newcommand{\fr}{\partial}
 \newcommand{\set}[1]{\left\{#1\right\}}
\newcommand{\norm}[1]{\left\Vert#1\right\Vert}
\newcommand{\abs}[1]{\left\vert#1\right\vert}

\newcommand{\pu}{{\mathbb{P}^1}}

\newcommand{\rest}[1]{ \arrowvert_{#1}}
\newcommand{\m}{{\bf M}}

\newcommand{\unsur}[1]{\frac{1}{#1}}

\newcommand{\lrpar}[1]{\left(#1\right)}

\newcommand{\la}{\lambda}

\renewcommand{\propto}{\!\varpropto\!}

\newcommand{\hot}{\mathrm{h.o.t.}}

\newcommand{\inv}{^{-1}}

\DeclareMathOperator{\area}{Area}
\DeclareMathOperator{\supp}{Supp}

\DeclareMathOperator{\Int}{Int}

\DeclareMathOperator{\leb}{Leb}

\DeclareMathOperator{\length}{Length}

\DeclareMathOperator{\diam}{Diam}



\newtheorem{prop}{Proposition} [section]
\newtheorem{thm}[prop] {Theorem}

\newtheorem{lem}[prop] {Lemma}
\newtheorem{cor}[prop]{Corollary}
\newtheorem{theo}{Theorem}

\newtheorem*{problem}{Problem}

\theoremstyle{remark}

\newtheorem{rmk}[prop]{Remark}

\setlength{\parskip}{.2em}

\title[Locally biholomorphic Julia sets]{When do two rational functions have locally biholomorphic Julia sets?}

\author{Romain Dujardin}
\address{Sorbonne Universit\'e, Laboratoire de probabilit\'es, statistique et mod\'elisation,    4 place Jussieu, 75005 Paris, France}
\email{romain.dujardin@sorbonne-universite.fr}
 \author{Charles Favre}
\address{CMLS, Ecole Polytechnique, 91128 Palaiseau Cedex, France}
\email{charles.favre@polytechnique.edu}
 \author{Thomas Gauthier}
\address{Laboratoire de Math\'ematiques d'Orsay, B\^atiment 307, Universit\'e Paris-Saclay, 91405 Orsay Cedex, France}
\email{thomas.gauthier1@universite-paris-saclay.fr}

\begin{document}

\begin{abstract}
In this article we address the following question, whose interest was recently renewed 
by problems arising in arithmetic dynamics:  under which conditions does there exist 
a local biholomorphism between the Julia sets of two given one-dimensional rational maps? 
In particular we find criteria ensuring that such a local isomorphism is induced 
by an algebraic correspondence. 
This extends and unifies 
classical results    due to Baker, Beardon, Eremenko, 
Levin,  Przytycki and others.  The proof involves entire curves and positive currents.
\end{abstract}

\maketitle

\section{Introduction}

The problem of determining when two rational maps have the same Julia set has 
been considered by many authors  
in the holomorphic dynamics literature  \cite{baker-eremenko, beardon, dinh, eremenko, levin, levin-przytycki, SS}, 
in relation with some
 classical functional equations. In certain situations (e.g., if the Julia set is the whole sphere), it is
 preferable to ask when two rational maps have the same measure of maximal entropy. 
 The conclusion is that these rational maps satisfy an algebraic relation whose analysis is 
 quite delicate (this  was  explored  in~\cite{pakovich_commuting, ye}). 
 Such rigidity issues have recently played an important role in 
 arithmetic dynamics (see e.g., \cite{baker-demarco,  favre-gauthier, GNY}, and also Remark~\ref{rmk:dmm} below).
 
 In this article we consider the following problem: 
 is any local biholomorphism preserving the Julia set (or the measure of maximal entropy)
 induced by an algebraic correspondence? 
The case of polynomials of the same degree with connected and locally connected 
Julia sets was recently addressed by Luo~\cite{luo}. Here
we deal with general rational maps, possibly of different degrees,
and  obtain a rather satisfactory answer when the maximal entropy measure is (quasi-)preserved
(Theorem~\ref{thm:globalizable}). We are also able to completely solve the    
 problem for polynomials  
 satisfying some mild expansion properties on their Julia sets 
(Theorem~\ref{thm:polynomial_TCE}; see also Remark~\ref{rmk:luo} for a discussion of the relationship between our results and those of~\cite{luo}).   
Related results were obtained in 
\cite{buff-epstein, inou, lomonaco-mukherjee}, under the stronger assumption 
of the existence of a partial analytic conjugacy. 

To be more specific, for a complex rational map $f$ of degree $d\geq 2$, we
 denote by $J_f\subset \pu$ its Julia set and by
 $\mu_f$ its measure of  maximal 
entropy. It is the unique atomless probability measure such that  $f^\varstar \mu_f = d \mu_f$.
Following the terminology of \cite{favre-gauthier},   we say that $f$ is \emph{integrable} if it is either a Chebychev, monomial, or Lattès map. Likewise, we say that $J_f$ is \emph{smooth} if it is equal to $\pu$, 
a circle, or a segment. 
Any integrable rational map has a smooth Julia set, but  there are  many  more   
  examples (see e.g. \cite{eremenko-vanstrien}).

Given two positive measures $\mu_1$ and $\mu_2$, we write $\mu_1\asymp\mu_2$
if  $c\inv\mu_2\leq \mu_1\leq c\mu_2$ for some positive constant 
$c$. In other words,
$\mu_1$ is absolutely continuous with respect to $\mu_2$ and $c^{-1} \leq \frac{d\mu_1}{d\mu_2} \leq c$.
We   use the more precise notation $\mu_1 \asymp_c \mu_2$ 
if there is a need to specify the constant $c$. 
We also write $\mu_1\propto\mu_2$ if $\mu_1$ is proportional to $\mu_2$, that is, if
 there exists $\alpha>0$ such that $\mu_1 = \alpha \mu_2$.

Here is our first  main result:

 \begin{theo}\label{thm:globalizable}
 Let $f_1$ and $f_2$ be   non-integrable 
  rational maps of degree larger than 1, and $U$ be any open subset of $\pu$
 intersecting $J_{f_1}$.
 Let $\sigma \colon U \to \pu$ be any non-constant 
  holomorphic map satisfying $\sigma^\varstar \mu_{f_2} \asymp \mu_{f_1}$ on $U$. When $J_{f_1}$ is smooth we further require that $\sigma^\varstar \mu_{f_2}  \propto\mu_{f_1}$.
  
 Suppose  in addition that there exists a repelling periodic point $p_1$ for $f_1$ such that $\sigma(p_1)$
 is preperiodic under $f_2$.
   
 Then there exist positive integers $a, b\in \nn^*$ and an irreducible algebraic curve 
 $Z\subset \pu \times\pu$ which is preperiodic under $(f_1^a,f_2^b)$ and contains the graph of $\sigma$.
 
In particular we have that $d_1^a   = d_2^b$, and  $\sigma^\varstar \mu_{f_2} \propto  \mu_{f_1}$ in all cases.
  \end{theo}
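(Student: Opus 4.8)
The plan is to promote the local map $\sigma$ to a global algebraic object by studying the analytic continuation of its graph, using the dynamics of $f_1$ on $J_{f_1}$ to spread $\sigma$ around and the relation $\sigma^\varstar\mu_{f_2}\asymp\mu_{f_1}$ to control the growth. First I would set up the basic functional-analytic input: since $p_1$ is a repelling periodic point of $f_1$, after replacing $f_1$ by an iterate I may assume $f_1(p_1)=p_1$, and I can linearize $f_1$ near $p_1$; the orbit $\{f_1^n\}$ applied to a small disk around $p_1$ covers larger and larger portions of $J_{f_1}$ (by the classical blowing-up property of repelling points and minimality of $f_1$ on $J_{f_1}$). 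The hypothesis that $\sigma(p_1)$ is preperiodic for $f_2$ means, after replacing $f_2$ by an iterate, that $\sigma(p_1)$ is a fixed point $q_2$ of $f_2$; I would want to arrange the relevant local picture so that $\sigma$ conjugates a branch of $f_1$ near $p_1$ to a branch of $f_2$ near $q_2$ (or at least to some iterate/inverse-branch combination), which forces $q_2$ to be repelling as well because $\sigma$ quasi-preserves the maximal entropy measures and hence cannot collapse a repelling point to a non-repelling one.

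The heart of the argument is an analytic-continuation/normal-families scheme. Consider the graph $\Gamma_\sigma\subset U\times\pu$ and push it forward by the correspondence $(f_1,f_2)$: the key point is that $\sigma$ satisfies, near $p_1$, a semi-local functional equation tying $f_1$ and $f_2$ through $\sigma$, so that $f_2\circ\sigma = \sigma\circ f_1$ (possibly up to passing to iterates and choosing inverse branches) holds on a neighborhood of $p_1$; iterating this relation, $\sigma$ extends holomorphically along all of $\bigcup_n f_1^n(\text{small disk})$, i.e. to a full neighborhood of $J_{f_1}$, as a (possibly multivalued) map still quasi-preserving $\mu_{f_2}\mapsto\mu_{f_1}$. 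The resulting family of local branches has uniformly bounded ``size'' in the sense that each branch pulls a fixed mass of $\mu_{f_2}$ back to at most $c$ times the $\mu_{f_1}$-mass of its domain; this boundedness is exactly what prevents the monodromy from being infinite. Then I would invoke the standard correspondence-closure principle (entire curves / Ahlfors currents, as announced in the abstract): the union of analytic continuations of $\Gamma_\sigma$ is a positive closed current of bounded mass in $\pu\times\pu$, its support is contained in an algebraic curve $Z$, and the functional equation shows $Z$ is invariant (hence preperiodic) under $(f_1^a,f_2^b)$ for suitable $a,b$. Finiteness of the degree of $Z$ comes from the mass bound.

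Once $Z$ is produced, the degree identity $d_1^a=d_2^b$ and the proportionality $\sigma^\varstar\mu_{f_2}\propto\mu_{f_1}$ follow formally: on the preperiodic curve $Z$ the dynamics $(f_1^a,f_2^b)$ restricts to a correspondence intertwining $\mu_{f_1}$ and $\mu_{f_2}$, and comparing the degrees of the two projections $Z\to\pu$ forces the topological degrees to match, $d_1^a = d_2^b$; pulling back $\mu_{f_2}$ along $Z$ then yields a measure that is $f_1^a$-invariant, of maximal entropy, hence equal to $\mu_{f_1}$ up to a multiplicative constant by uniqueness, which upgrades $\asymp$ to $\propto$ on all of $U$ by analytic continuation of densities. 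In the smooth case the proportionality is assumed at the outset precisely because there the measure-class argument is not by itself rigid enough, but the same globalization still goes through once one knows $\sigma$ respects the affine/circle structure.

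The main obstacle I expect is controlling the analytic continuation: showing that the multivalued extension of $\sigma$ over a neighborhood of $J_{f_1}$ has \emph{finitely many} branches, equivalently that the associated current has finite mass and algebraic support. The naive continuation along $f_1^n$-orbits could a priori ramify wildly; the mass estimate coming from $\sigma^\varstar\mu_{f_2}\asymp\mu_{f_1}$ is what must be leveraged — together with the fact that $\mu_{f_1}$ and $\mu_{f_2}$ have no atoms and non-integrability rules out the exceptional structures — to bound the branching. Handling the passage to iterates and inverse branches cleanly (so that the functional equation really closes up to give an \emph{algebraic} correspondence rather than just an invariant lamination) is the delicate technical core; this is presumably where the entire-curve/positive-current machinery advertised in the abstract does the decisive work, via an Ahlfors-type construction producing the closed current whose support is $Z$.
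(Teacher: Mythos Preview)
Your outline has the right architecture --- local functional equation, analytic continuation via dynamics, positive currents, algebraic curve --- but two load-bearing steps are asserted rather than proved, and in each case the paper supplies a specific ingredient you have not identified.

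\textbf{The functional equation.} You write that ``$\sigma$ satisfies, near $p_1$, a semi-local functional equation $f_2\circ\sigma=\sigma\circ f_1$ (possibly up to passing to iterates and choosing inverse branches).'' This is not automatic: $f_2\circ\sigma$ and $\sigma\circ f_1$ both fix $q_2$, but there is no a priori reason they should agree, even after adjusting iterates. The paper establishes this via a genuine argument (Proposition~\ref{prop:local_isomorphism_periodic}): one writes $f_2\circ\sigma=\sigma\circ g_1$ for a locally defined germ $g_1$ fixing $p_1$, then invokes Levin's commutation lemma (Lemma~\ref{lem:levin_prop1}) to show $g_1$ commutes with $f_1$. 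In the linearizing coordinate of $f_1$ this forces $g_1$ to be linear, and the group generated by the multipliers $\lambda_1$ and $g_1'(0)$ must be discrete (else one manufactures infinitely many local symmetries of $J_1$, contradicting non-integrability via Levin's finiteness theorem). Discreteness yields $\lambda_1^a=\kappa_2^b$, hence $f_1^a=g_1^b$ and $\sigma\circ f_1^a=f_2^b\circ\sigma$. Your phrase ``up to passing to iterates and choosing inverse branches'' hides exactly this work; without it you have no relation to iterate.

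\textbf{Finiteness of branches.} You correctly flag as the main obstacle that the multivalued continuation of $\sigma$ must have only finitely many sheets, and you propose that the mass bound coming from $\sigma^\varstar\mu_2\asymp_c\mu_1$ ``is exactly what prevents the monodromy from being infinite.'' But a uniform mass bound on each branch does not by itself bound the \emph{number} of branches --- it only gives normality. The decisive input you are missing is again Levin's theorem (Theorem~\ref{thm:levin}): for a non-integrable map, any normal family of local symmetries of $J_f$ with non-constant limits is \emph{finite}. The paper packages the continuation not as a multivalued map over a neighborhood of $J_1$ but as a single entire curve $\Psi=(\chi_1,\chi_2)\colon\cc\to\pu\times\pu$, where $\chi_1,\chi_2$ are (generalized) Poincar\'e--Koenigs maps satisfying $\chi_1^\varstar\mu_1\asymp\chi_2^\varstar\mu_2$. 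One then runs Ahlfors' covering-surface theory (the quantitative Five Islands theorem) to produce many univalent islands of $\chi_1$ over a fixed disk $D$; composing $\chi_2$ with the inverse branches yields local symmetries $\tau_n$ of $J_2$ with a uniform $\asymp_{c^2}$ bound on $\tau_n^\varstar\mu_2$. The normal-families lemma (Lemma~\ref{lem:normal}) gives normality with non-constant limits, Levin's theorem then forces $\{\tau_n\}$ to be finite, so the Ahlfors current is locally a finite sum of graphs, and Siu's decomposition theorem promotes one of these to a global algebraic curve containing $\Psi(\cc)$. Your ``union of analytic continuations is a positive closed current of bounded mass'' is not quite this: closedness and the passage from finite local graphs to a global algebraic variety both require the specific machinery (Ahlfors isoperimetric sequence for closedness, Siu for algebraicity).

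In short: your sketch is morally aligned with the paper, but the two places you mark as ``key point'' and ``delicate technical core'' are precisely where Levin's rigidity theorem does the real work, and you have not named it.
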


 In plain words, the  local measure  class preserving morphism $\sigma$ between $J_{f_1}$ and $J_{f_2}$ is induced by an algebraic correspondence between $f_1$ and $f_2$. It is easy to see that such a result cannot be true in the integrable case (see Remark~\ref{rmk:integrable}).
 
   Preperiodic algebraic curves 
under $(f_1,f_2)$ in $\pu\times \pu$ 
were classified by Pakovich \cite{pakovich_curves}: the upshot is that for any such curve 
there exists rational functions $X_1, X_2, Y_1, Y_2$ such that 
the curve is  a component of 
  $Y_1(x) - Y_2(y) = 0$,  $Y_1\circ X_1  = Y_2\circ X_2$, $X_1\circ Y_1  = f_1^n$ and 
 $X_2\circ Y_2 =f_2^n$. If $f=f_1 = f_2$ is a 
 generic map of   degree $d\geq 3$, this curve must be  a component of 
 $\set{f^k(x)  = f^\ell(y)}$ (see~\cite{pakovich_maximal,ye}). We conclude 
  that in this case   any algebraic correspondence is induced by 
  a branch of $f^{-\ell}\circ f^k$.
  In the polynomial case, more precise results were obtained   in~\cite{medvedev-scanlon}.

The proof of Theorem~\ref{thm:globalizable}
is given in Section~\ref{sec:proof_globalizable}. A natural strategy to establish such a 
 result is to use the expansion induced 
by the repelling point $p_1$ to  propagate the local morphism $\sigma$ to the whole of $\pu$. 
The difficulty is that the  limiting objects will be transcendental and highly multivalued.  
Our approach is geometric and based on entire curves and positive   currents: 
notable ingredients include the Ahlfors Five Islands Theorem (in its strong, quantitative form) and 
Siu's decomposition 
 theorem for positive closed currents. 
 
 Another  key tool  in the proof is  a deep rigidity theorem due to Levin~\cite{levin}, which asserts
that, for a given rational map $f$,  
 there are in a sense 
 only finitely many nontrivial local symmetries of  $J_f$ defined in
 a fixed open set $U$ intersecting $J_f$  
 (see \S\ref{subs:levin} for details). A posteriori, Theorem~\ref{thm:globalizable} may be viewed 
 as a refinement of Levin's theorem: these symmetries are local branches of some
 global algebraic symmetry of $f$. 
 
Note that the notion of local symmetry in Levin's theorem is less restrictive than ours: for a non-smooth Julia set it is just a local holomorphic map such that 
$\sigma\inv(J_f) \cap U=J_f\cap U$. In view of this,  it is natural to expect that, for 
non-smooth Julia sets, the assumption that $\sigma$ preserves the measure class of the maximal entropy measure is superfluous in Theorem~\ref{thm:globalizable} (see   Remark~\ref{rmk:TCE} for a related discussion).
More ambitiously, we may ask the following:
\begin{problem}\label{pbm:unfeasible}
Let  $f_1$ and $f_2$ be non-integrable  rational maps, and
  $\sigma$ be a   
 holomorphic map defined in some open set  $U$ intersecting  $J_{f_1}$, 
 such that the equality  $\sigma\inv(J_{f_2})\cap U = J_{f_1}\cap U $ holds if $J_{f_1}$ is not smooth, 
and $\sigma^\varstar \mu_{f_2} \asymp \mu_{f_1}$  when $J_{f_1}$ is smooth.  

Then is $\sigma$ induced by an algebraic correspondence between $f_1$ and $f_2$?
 \end{problem}

 In Sections~\ref{sec:TCE},~\ref{sec:polynomials}, and~\ref{sec:jordan} we take some further steps towards the resolution of this problem.
  We first prove that   the assumption 
 that $\sigma$ maps a repelling point to a preperiodic point can be dropped 
under suitable expansion properties for $f_1$ and $f_2$.   Indeed, in this case
the geometry of the Julia set  (resp. of $\mu_f$) alone is enough 
to detect preperiodic points   
   (see Section \ref{sec:TCE}, and in particular Corollary~\ref{cor:automatic_S3}, for details). 

For polynomials, by using the fact that the maximal entropy measure 
coincides with the harmonic measure of the Julia set (viewed from infinity), we are able to show 
that, under reasonable assumptions, the    measure class preservation 
 $\sigma^\varstar \mu_{f_2}\asymp \mu_{f_1}$ is automatically implied 
by the geometric condition $\sigma\inv(J_{f_2})\cap U = J_{f_1}\cap U$. 
Some  non-trivial potential-theoretic arguments
are developed in Section~\ref{sec:polynomials} to deal with the delicate
  interplay between the local and global properties of the harmonic measure. 

  In Section~\ref{sec:jordan}, we 
  deal with one specific issue concerning polynomial  Julia sets that are Jordan curves: 
  how to make \emph{locally} the distinction between the inside and the outside of $J_f$? 
As a matter of fact, in  Proposition~\ref{prop:inside_out} we prove that if $f_1$ and $f_2$ are polynomials   whose Julia sets are non-smooth Jordan curves, there does not exist a local biholomorphism mapping
 $J_1$ to $J_2$ and mapping the bounded component of  $\cc\setminus J_1$ to the 
 bounded component of  $\cc\setminus J_2$. 
  This is particularly delicate for quasicircles, which have no preferred side from the conformal point of view
  
Combining this to the  results of Section~\ref{sec:TCE} 
yields the following streamlined version of Theorem~\ref{thm:globalizable}, 
which solves the above problem for  polynomials satisfying the so-called 
Topological Collet-Eckmann (TCE)  condition (see Section \ref{sec:TCE} for this notion; Theorem~\ref{thm:polynomial_TCE} is proven in Section~\ref{sec:polynomials}).
   
  \begin{theo}\label{thm:polynomial_TCE}
  Let  $f_1$ and $f_2$  be  polynomials, such that  
 either  of $f_1$ or $f_2$ is non-integrable and 
  satisfies the Topological Collet-Eckmann property. Let 
  $\sigma$ be a   
 holomorphic map defined in some open set  $U$ intersecting  $J_{f_1}$,  such that the equality 
  $\sigma\inv(J_{f_2})\cap U = J_{f_1}\cap U $ holds.
 Then $\sigma$ is induced by an algebraic correspondence between $f_1$ and $f_2$.  
  \end{theo}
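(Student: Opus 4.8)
The plan is to deduce Theorem~\ref{thm:polynomial_TCE} from Theorem~\ref{thm:globalizable} by supplying the two hypotheses that are missing in the statement at hand, namely (i) the measure-class preservation $\sigma^\varstar\mu_{f_2}\asymp\mu_{f_1}$ on $U$ (or $\propto$ in the smooth case), and (ii) the existence of a repelling periodic point $p_1$ of $f_1$ with $\sigma(p_1)$ preperiodic under $f_2$. The point is that both of these come \emph{for free} once one of the maps satisfies the Topological Collet--Eckmann property: condition (i) is exactly the kind of automatic upgrade promised by the potential-theoretic analysis of Section~\ref{sec:polynomials}, which for polynomials identifies $\mu_f$ with harmonic measure of $J_f$ and shows that a local homeomorphism matching the Julia sets must match the harmonic measures with bounded density; condition (ii) is the content of Corollary~\ref{cor:automatic_S3} in Section~\ref{sec:TCE}, which says that under TCE the geometry of $J_f$ alone detects enough repelling-to-preperiodic pairs.

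First I would dispose of a degenerate case: if $J_{f_1}$ is smooth and $f_1$ is a polynomial, then $J_{f_1}$ is a circle or a segment, so $f_1$ is conjugate to $z\mapsto z^{d_1}$ or to a Chebyshev map, hence $f_1$ is integrable; similarly $J_{f_2}$ smooth forces $f_2$ integrable. Since the hypothesis of the theorem requires one of $f_1,f_2$ to be non-integrable \emph{and} TCE, and the relation $\sigma\inv(J_{f_2})\cap U=J_{f_1}\cap U$ with $\sigma$ non-constant open means $J_{f_1}$ is non-smooth iff $J_{f_2}$ is, we may assume from now on that \emph{both} Julia sets are non-smooth quasi-self-similar Cantor-or-dendrite-or-Jordan sets, and in particular the TCE map among $f_1,f_2$ is non-integrable with non-smooth Julia set. (I should also record that $\sigma$ being a non-constant holomorphic map with $\sigma\inv(J_{f_2})=J_{f_1}$ locally is automatically a local homeomorphism onto a relatively open piece of $J_{f_2}$, after shrinking $U$ and removing the critical points of $\sigma$, which are isolated; this local injectivity is what lets the Section~\ref{sec:polynomials} machinery apply.)

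The substantive step is then to invoke, in order: (a) the potential theory of Section~\ref{sec:polynomials} to get $\sigma^\varstar\mu_{f_2}\asymp\mu_{f_1}$ on (a possibly smaller) $U$ --- here one uses that for polynomials $\mu_{f_i}=\Delta G_i$ where $G_i$ is the Green function of $J_{f_i}$, that a local homeomorphism between the Julia sets extends to a relation between the Green functions whose harmonicity lets one control Radon--Nikodym densities, and that the TCE/expansion hypothesis rules out the pathologies (cusps, flat spots, wild local geometry) that would otherwise destroy the two-sided bound; (b) Corollary~\ref{cor:automatic_S3} applied to the TCE map to produce the required repelling periodic point $p_1$ of $f_1$ with $\sigma(p_1)$ preperiodic under $f_2$ --- if it is $f_2$ rather than $f_1$ that is TCE, one runs the argument with the roles exchanged using a local inverse branch of $\sigma$, which is licit since $\sigma$ is a local homeomorphism onto its image in $J_{f_2}$. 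With (a) and (b) in hand, Theorem~\ref{thm:globalizable} applies verbatim and yields an irreducible algebraic curve $Z\subset\pu\times\pu$, preperiodic under $(f_1^a,f_2^b)$, containing the graph of $\sigma$; that is precisely the assertion that $\sigma$ is induced by an algebraic correspondence between $f_1$ and $f_2$.

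I expect the main obstacle to be step (a): transferring the \emph{global} regularity of harmonic measure (captured by TCE, which is a statement about orbits of critical points) into a \emph{local}, quantitative comparison $c\inv\le d(\sigma^\varstar\mu_{f_2})/d\mu_{f_1}\le c$ valid on an open set that a priori sees only a small piece of $J_{f_1}$. The delicate issue is the interplay flagged in the introduction: harmonic measure is not a local invariant of the Julia set --- it remembers which side infinity lies on --- so one must simultaneously control the local geometry of $J_{f_1}$ near $U$ (uniform perfectness, Hölder continuity of the Green function up to the boundary, no ``deep fjords''), the corresponding local geometry of $J_{f_2}$ near $\sigma(U)$, and the compatibility of these two under $\sigma$. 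TCE is exactly the hypothesis that makes all of this uniform (it is equivalent to a long list of expansion/geometry conditions, including Hölder continuity of the Green function and uniform weak hyperbolicity), so the proof is really a matter of assembling the right equivalent formulation of TCE and feeding it into the potential-theoretic estimates; the Jordan-curve subtlety (Proposition~\ref{prop:inside_out}) handles the one remaining ambiguity, namely that when $J_{f_1}$ is a Jordan curve one must still know $\sigma$ does not swap inside and outside, and that proposition shows such a swap is impossible for non-smooth polynomial Jordan Julia sets.
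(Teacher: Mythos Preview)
Your proposal is correct and follows essentially the same route as the paper's proof: reduce to Theorem~\ref{thm:globalizable} by first invoking Theorem~\ref{thm:harmonic_measure} (the potential theory of Section~\ref{sec:polynomials}) for the measure comparison $\sigma^\varstar\mu_{2}\asymp\mu_{1}$, and then Corollary~\ref{cor:automatic_S3} for the repelling-to-preperiodic point. The one technical verification you leave implicit is checking the hypotheses of Theorem~\ref{thm:harmonic_measure} in the connected case: the paper splits explicitly into the disconnected and connected cases, and in the latter uses that TCE implies local connectedness of $J_f$ (so that alternative~(2) of Theorem~\ref{thm:harmonic_measure} applies) together with Remarks~\ref{rmk:local_TCE} and~\ref{rmk:local_connectedness} to transfer TCE, non-integrability, and (local) connectedness between $f_1$ and $f_2$.
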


\begin{rmk}\label{rmk:dmm}
Our investigations were partially motivated by a question that arose in the work of the first two authors
on the dynamical Manin-Mumford problem for plane polynomial automorphisms~\cite{dmm_automorphisms}.
The context is as follows:

Suppose $f$ is a complex plane polynomial automorphisms of positive entropy, and let $p$ be any hyperbolic fixed point. 
Suppose moreover that there exists a local biholomorphism $\sigma$ from the stable manifold of $p$ to its unstable
manifold that maps the backward invariant current to the forward invariant current (these objects are higher dimensional analogs of the maximal entropy measure). 
Showing that $\sigma$ extends as a global algebraic involution of $\cc^2$ would imply~\cite[Conjecture~1]{dmm_automorphisms}; see Remark 4.4 of op.cit. \qed
\end{rmk}

 \subsection*{Acknowledgements} 
 Nessim Sibony sadly passed away while we were preparing this paper. He was a great promoter of the use of positive currents in holomorphic dynamics, and we  dedicate this paper to his memory. We extend our thanks to Alano Ancona, Guy David and 
 Pascale Roesch for useful conversations. 
 The third author is partially supported by the ANR grant Fatou ANR-17-CE40-0002-01.
  
\section{Proof of Theorem \ref{thm:globalizable}}\label{sec:proof_globalizable}

\subsection{Notation and conventions}
Any positive measure $\mu$ on $\pp^1$ can be locally defined by 
$\mu = dd^c u$ for some subharmonic function $u$. We say that $\mu$ has continuous potentials
when $u$ is continuous. 
 Let $f \colon U \to \pp^1$ be any holomorphic map defined on a connected open subset of the Riemann sphere. 
For any positive measure $\mu= dd^c u$ as before, we locally define 
 $f^\varstar \mu$  by   $f^\varstar \mu = dd^c(u \circ f)$. Alternatively, if $\mu$ gives no mass to points, 
 we may set 
\[f^\varstar \mu\rest{A} = \left({f\rest{f(A)}\inv}\right)_\varstar \mu\] on any Borel set $A$ on which $f$ is injective.

When two rational maps $f_1$ and $f_2$ are given, to ease notation we often write $\mu_i$ for $\mu_{f_i}$, and likewise $J_i$  for the Julia sets, etc. 

We use the   notation  $D(z,r)$ for  the open disk of center $z\in \cc$ and radius $r$ in $\cc$,   write $\dd = D(0,1)$ and identify $\fr\dd$ with   $\mathbb{S}^1  = \re/\zz$. 
Likewise, $B(z,r)$ refers to the  spherical disk of center $z$ and radius $r$ in $\pu$.
We often denote by $C$  a ``constant'' which may change from line to line, independently of some asymptotic quantity which should be clear from the context, and write $a\lesssim b$ for $a\leq Cb$ and $a\asymp b$
 for $C\inv b\leq  a\leq  Cb$.

\subsection{Levin's theorem}  \label{subs:levin}
 
By definition the Julia set of a rational map $f$ is said to be smooth if it contains an open set or if 
on some open subset
it coincides with a smooth arc. 
In this case  it was proved by Fatou that $J_f$ is respectively equal to $\pu$, or it is a circle or a 
segment (see \cite[\S 56 p. 250]{fatou}).  
 
 \begin{thm}[{Levin \cite{levin}}] \label{thm:levin}
Let $f$ be a rational map of degree greater than $1$. 
Suppose there exists  a connected open set $U\subset \pu$ intersecting $J_f$ and 
 an infinite family of holomorphic maps $\sigma_n \colon U\to \pu$ such that 
 $\sigma_n^{-1}(J_f)\cap U  = J_f\cap U$. If in addition $J_f$ is smooth we require that 
 $\sigma_n^\varstar \mu_f \propto \mu_f$.  

If the family  $(\sigma_n)$ is normal and all its  limit functions are non-constant, then $f$ is integrable. 
\end{thm}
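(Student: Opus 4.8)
The plan is to run a renormalisation argument that manufactures, out of the infinite normal family, a non-trivial local one-parameter group of holomorphic symmetries of $J_f$, and then to rule such a group out unless $f$ is integrable. First I would normalise: passing to a subsequence, assume $\sigma_n\to\sigma$ locally uniformly on $U$ with $\sigma$ non-constant, and fix a base point $a\in U\cap J_f$ with $\sigma'(a)\neq0$, which exists because $U\cap J_f$ is infinite while $\{\sigma'=0\}$ is discrete. For $n$ large $\sigma_n$ is a biholomorphism of a fixed disc around $a$ onto its image; the relation $\sigma_n^{-1}(J_f)\cap U=J_f\cap U$ then shows that the corresponding local inverse is again a local symmetry of $J_f$ (and still pulls $\mu_f$ back to a constant multiple of itself in the smooth case). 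Composing, $g_n:=\sigma_n^{-1}\circ\sigma_{n+1}$ is, for $n\gg1$, a local symmetry defined near $a$, converging uniformly to $\id$ on a fixed neighbourhood of $a$, with $g_n^\varstar\mu_f\propto\mu_f$ in the smooth case; moreover $g_n\neq\id$, since two holomorphic maps that agree near $a$ coincide on the connected set $U$.

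Next I would renormalise the near-identity symmetries $g_n$. Setting $c_n:=\|g_n-\id\|$ on a closed disc strictly inside the common domain of definition and $\Phi_n:=(g_n-\id)/c_n$, Montel's theorem yields, along a subsequence, $\Phi_n\to\Phi$ locally uniformly with $\Phi\not\equiv0$ (the normalising maximum is attained at an interior point of the region of uniform convergence). Comparing the iterates $g_n^{k}$ with the Euler scheme for the holomorphic vector field $\Phi\,\fr$ shows that $g_n^{\lfloor s/c_n\rfloor}\to\theta_s$ for $|s|$ small, where $\theta_s$ is the time-$s$ flow map of $\Phi\,\fr$. Since each $g_n^{k}$ is a composition of local symmetries and $J_f$ is closed, $\theta_s$ maps $J_f$ into $J_f$ near $a$; and in the smooth case, writing $g_n^\varstar\mu_f=\beta_n\mu_f$ and passing to a subsequence along which $c_n^{-1}\log\beta_n\to c\in\re$, one also gets $\theta_s^\varstar\mu_f=e^{cs}\mu_f$ near $a$. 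Thus $J_f$ carries a non-trivial local one-parameter group of holomorphic symmetries, quasi-preserving $\mu_f$ in the smooth case.

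When $J_f$ is not smooth this is impossible: if $\Phi(t)\neq0$ for some $t\in J_f$ near $a$, then $s\mapsto\theta_s(t)$ is a non-constant real-analytic arc contained in $J_f$, so $J_f$ contains a smooth arc and is therefore smooth by the theorem of Fatou recalled in \S\ref{subs:levin} — a contradiction. Hence $\Phi$ vanishes identically on $J_f$ near $a$; since $J_f$ is perfect its zeros accumulate, so $\Phi\equiv0$, contradicting $\|\Phi_n\|=1$. So when $J_f$ is not smooth the hypothesis of the theorem is never satisfied and the conclusion holds vacuously (consistently with the fact that every integrable map has a smooth Julia set).

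When $J_f$ is smooth — a circle, a segment, or $\pu$ — the flow survives and I would use it to pin down $\mu_f$. Within the neighbourhood where $\theta_s$ and $\Phi$ are defined, choose $a'\in J_f$ with $\Phi(a')\neq0$ (possible since a holomorphic function vanishing on the real-analytic arc $J_f$ near $a$, or $\equiv 0$ on an open set in the $\pu$ case, vanishes identically), and straighten $\Phi$ to the constant vector field in a holomorphic coordinate near $a'$: then $\theta_s$ becomes a real translation, $J_f$ (in the first two cases) a straight segment, and the scaling relation $\theta_s^\varstar\mu_f=e^{cs}\mu_f$ forces the local density of $\mu_f$ with respect to arclength (resp.\ area) to be a nowhere-vanishing real-analytic exponential. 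Thus $\mu_f$ is absolutely continuous with respect to the natural measure on $J_f$ near $a'$, hence — by the usual invariance and uniqueness properties of the maximal entropy measure — everywhere on $J_f$; and by Zdunik's rigidity theorem (together with its analogues for the circle and the segment) this forces $f$ to be a monomial, Chebyshev, or Lattès map, i.e.\ integrable. I expect the genuine difficulty to lie in the renormalisation producing $\theta_s$: obtaining a truly non-zero limiting vector field whose flow really preserves $J_f$ requires uniform control of the domains of the iterates $g_n^{k}$ (so that each is honestly a composition of local symmetries) together with a uniform Euler-scheme estimate; the passage from local to global absolute continuity and the exact rigidity input for the circle and interval are comparatively minor technical points.
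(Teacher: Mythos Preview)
The paper does not prove this theorem; it is quoted from Levin~\cite{levin}, so there is no in-paper proof to compare with. Levin's own argument (as the statement of Lemma~\ref{lem:levin_prop1} suggests) proceeds quite differently: one chooses a repelling periodic point $p\in U$, uses the linearizing coordinate at $p$ to reduce the symmetries to germs commuting with $z\mapsto\lambda z$, shows these germs are linear, and then argues that an infinite set of linear symmetries of the pulled-back Julia set forces rotational invariance and hence integrability. Your vector-field/flow idea is a genuinely different route.

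That said, the renormalisation step has a real gap. You set $c_n=\|g_n-\id\|$ on a closed disc and $\Phi_n=(g_n-\id)/c_n$, then claim $\Phi_n\to\Phi\not\equiv0$. The family $(\Phi_n)$ is bounded by $1$ only on that closed disc, so Montel gives normality only on its \emph{interior}; the maximum of $|\Phi_n|$ is attained on the boundary circle (by the maximum principle), which is \emph{not} interior to the region of locally uniform convergence. Concretely, near-identity univalent maps of the form $g_n(z)=z+\varepsilon_n(z-a)^{n}$ on $D(a,2r)$ give $\Phi_n(z)=\big((z-a)/r\big)^{n}$, which tends to $0$ locally uniformly on $D(a,r)$ while $\|\Phi_n\|_{\overline{D(a,r)}}=1$. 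Nothing in your setup rules this out: the $g_n$ are merely compositions $\sigma_n^{-1}\circ\sigma_{n+1}$ of local symmetries and carry no a~priori control on the order of contact with $\id$. You flag this as the ``genuine difficulty'' at the end, and it is; as written, the production of a non-zero limiting vector field is unjustified.

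There is a second, smaller gap in the smooth case: you pass to a subsequence with $c_n^{-1}\log\beta_n\to c\in\re$, but give no reason why $c_n^{-1}\log\beta_n$ should be bounded. The constants $\beta_n$ come from the measure relation $g_n^\varstar\mu_f=\beta_n\mu_f$, and there is no evident link between $\log\beta_n$ and the $C^0$ displacement $c_n$ without further analysis of how $\mu_f$ behaves under near-identity maps. If both issues are repaired, the rest of your outline --- the Fatou smoothness dichotomy and the appeal to Zdunik-type rigidity --- is sound.
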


In the following, we will refer to a local map $\sigma$ satisfying these assumptions simply as a \emph{local symmetry} of $J_f$.  
Let us also pinpoint an intermediate step in the proof of Theorem~\ref{thm:levin} which will also be useful. 

\begin{lem}[{see \cite[Proposition 1]{levin}}] \label{lem:levin_prop1}
Let $p$ be a repelling fixed point of the rational map $f$. 
Assume that  $\sigma \colon U \to \pu$ is a holomorphic map fixing $p$ such that $\sigma'(p)\neq 0$  and 
$\sigma^{-1}(J_f) \cap U = J_f \cap U$; if in addition $J_f$ is smooth we further require that 
  $\sigma^\varstar \mu_f \propto \mu_f$. Then $\sigma$ and $f$ commute. 
  \end{lem}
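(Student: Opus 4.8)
The plan is to show that the holomorphic map $g := \sigma\inv\circ f\circ\sigma$ (defined near $p$) coincides with $f$ near $p$, which by analytic continuation along $J_f$ and the uniqueness of meromorphic continuation gives $f\circ\sigma=\sigma\circ f$ on all of $U$. First I would observe that, since $\sigma$ fixes $p$ with $\sigma'(p)\neq0$, it is a local biholomorphism at $p$, so $g$ is a well-defined holomorphic germ at $p$ fixing $p$. The hypothesis $\sigma\inv(J_f)\cap U=J_f\cap U$ (together with $\sigma$ being a local homeomorphism near $p$) shows that $\sigma$ maps a neighborhood of $p$ in $J_f$ bijectively onto a neighborhood of $p$ in $J_f$; combined with the total invariance $f\inv(J_f)=J_f$ this yields $g\inv(J_f)\cap V=J_f\cap V$ on a small neighborhood $V$ of $p$. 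In other words $g$ is itself a local symmetry of $J_f$ fixing $p$.

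The key point is then to identify $g$ with $f$ using the expansion at $p$. Both $f$ and $g$ fix $p$; let $\lambda=f'(p)$, with $\abs\lambda>1$ since $p$ is repelling. Computing derivatives, $g'(p)=\sigma'(p)\inv\cdot\lambda\cdot\sigma'(p)=\lambda$, so $f$ and $g$ have the same multiplier at $p$. Linearizing $f$ at $p$ by a Kœnigs coordinate $\varphi$ (so $\varphi\circ f\circ\varphi\inv(w)=\lambda w$), the conjugated map $h:=\varphi\circ g\circ\varphi\inv$ fixes $0$ with $h'(0)=\lambda$, hence is also linearizable with the \emph{same} linearizing coordinate up to a scalar: there is $c\neq0$ with $h(w)=\psi\inv(\lambda\psi(w))$ where $\psi(w)=cw+\hot$. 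Thus in the Kœnigs coordinate both $f$ and $g$ become multiplication by $\lambda$ up to the affine scaling $w\mapsto cw$; equivalently, in $\varphi$-coordinates $g=S\circ f\circ S\inv$ where $S$ is a germ commuting with $w\mapsto\lambda w$ — but a holomorphic germ commuting with a linear contraction/expansion of multiplier $\lambda$, $\abs\lambda\neq1$, must itself be linear, $S(w)=cw$. So $g$ and $f$ are linearly conjugate near $p$ via a scaling, and both have the local Julia set $J_f$ as their (totally invariant) Julia-type set. Now I would invoke exactly the conclusion of Levin's rigidity input used in the proof of Theorem~\ref{thm:levin}: two germs that are linearly conjugate, share the repelling fixed point $p$ and the same local set $J_f\cap V$ as prescribed invariant set (with the proportionality of $\mu_f$ in the smooth case), must be equal — because the scaling factor $c$ is pinned down by the requirement that $S$ preserve $J_f\cap V$ near $p$, forcing $c$ to lie in a discrete set, and continuity/connectedness then forces $c=1$. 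Hence $g=f$ near $p$.

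Finally, from $\sigma\inv\circ f\circ\sigma=f$ near $p$ we get $f\circ\sigma=\sigma\circ f$ as germs at $p$; since $\sigma$ is holomorphic on the connected open set $U$ and $f$ is rational, both sides are holomorphic (meromorphic) on $U$, so they agree on all of $U$ by the identity principle. This is the desired commutation.

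The main obstacle is the step pinning down the scaling constant $c$ — i.e.\ ruling out that $\sigma$ differs from a true local conjugacy by a nontrivial linear factor preserving $J_f$. Intuitively $J_f$ near a repelling point is ``asymptotically self-similar'' only through $\lambda$, and its fine geometry does not admit a one-parameter family of linear symmetries unless $f$ is integrable; but the rigorous argument is precisely the heart of Levin's Proposition~1, so here I would cite \cite[Proposition 1]{levin} rather than reprove it, using the pieces assembled above (reduction to a linearly-conjugate germ with the same invariant local set) to match its hypotheses.
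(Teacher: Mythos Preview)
The paper itself gives no proof of this lemma; it simply cites Levin's Proposition~1. Since you ultimately defer to the same citation, your bottom line matches the paper's. However, the sketch you give before deferring contains a genuine error, not just an acknowledged difficulty.

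The problematic step is the claim that ``in $\varphi$-coordinates $g=S\circ f\circ S^{-1}$ where $S$ is a germ commuting with $w\mapsto\lambda w$.'' This is circular. Two germs $f$ and $g$ fixing $p$ with the same repelling multiplier $\lambda$ are indeed analytically conjugate (both linearize to $M_\lambda$), but the conjugacy $S$ has no reason to commute with $M_\lambda$. In fact, in the K\oe nigs chart of $f$, the conjugating map is the K\oe nigs coordinate of $\tilde g$, and this is linear if and only if $\tilde g$ is already equal to $M_\lambda$, i.e.\ $g=f$. So the assertion ``$S$ commutes with $M_\lambda$, hence $S$ is linear'' assumes the conclusion. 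Your subsequent remark that the constant $c$ must be pinned down by the geometry of $J_f$ does not repair this, because at that point you have not reduced to a \emph{linear} conjugacy at all---$S$ is a priori a general germ.

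The route Levin actually takes (and the way the lemma is used downstream in the paper, cf.\ the proof of Proposition~2.7) avoids introducing $g=\sigma^{-1}\circ f\circ\sigma$ altogether. One works directly with $\tilde\sigma=\varphi\circ\sigma\circ\varphi^{-1}$ in the K\oe nigs coordinate of $f$ and shows that $\tilde\sigma$ itself is linear (equivalently, commutes with $M_\lambda$). The mechanism is the renormalized family $\tilde\sigma_n(w)=\lambda^{n}\tilde\sigma(\lambda^{-n}w)$, which corresponds to $f^{n}\circ\sigma\circ f^{-n}$: each $\tilde\sigma_n$ is again a local symmetry of $\varphi(J_f)$, and the sequence converges to the linear map $w\mapsto\sigma'(p)\,w$. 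One then exploits the invariance of $J_f$ (or of $\mu_f$ in the smooth case) under this limiting family to force $\tilde\sigma$ to equal its linear part. Your detour through $g$ and the putative commuting $S$ is a wrong turn; if you want to flesh out the citation rather than just invoke it, work with $\sigma$ directly in the linearizing chart.
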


\subsection{A normal families lemma}
The normality assumption in Levin's theorem will
 be deduced  from a uniform bound on the Radon-Nikodym derivative $\frac{d\sigma_n^\varstar\mu_f}{d\mu_f}$, 
 thanks to the following 
normal families criterion.

\begin{lem}\label{lem:normal}
Let $U\subset \pu$ be a connected open set, $\nu_1$ a non-zero positive measure on $U$ 
and  $\nu_2$ a positive measure on $\pu$ 
with continuous local potentials. 
If $\sigma_n$ is a sequence of  holomorphic mappings $U\to \pu$  
 such that  $(\sigma_n)^\varstar \nu_2 \asymp_c  \nu_1$ for some uniform $c>0$, 
 then $(\sigma_n)$ is a normal family and all its limiting maps are non-constant. 
\end{lem}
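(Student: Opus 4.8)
The plan is to prove normality via Montel's theorem, by showing that the maps $\sigma_n$ omit a definite amount of the sphere locally, and then to rule out constant limits using the lower bound $\sigma_n^\varstar\nu_2\geq c\inv\nu_1$.

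\medskip

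\noindent\textbf{Step 1: local uniform mass control.}
First I would fix a point $x_0\in U$ in the support of $\nu_1$ and a small closed ball $\overline B = \overline B(x_0,2r)\subset U$. Since $\nu_2$ has continuous local potentials, it is non-atomic and moreover puts a \emph{uniformly small} mass on small balls: there is a continuous function $\omega(s)\to 0$ as $s\to 0$ with $\nu_2(B(y,s))\leq \omega(s)$ for all $y\in\pu$ (by compactness of $\pu$ and continuity of the potentials). On the other hand $m:=\nu_1(\overline B(x_0,r))>0$. The pullback relation $\sigma_n^\varstar\nu_2\asymp_c\nu_1$ gives $\nu_2\big(\sigma_n(\overline B(x_0,r))\big)\geq (\text{something})$ once we are careful about multiplicity — more precisely, writing $A_n=\overline B(x_0,r)$, for any Borel set on which $\sigma_n$ is injective we have $\nu_1\leq c\,\sigma_n^\varstar\nu_2$, hence $\nu_1(A_n)\leq c\,\nu_2(\sigma_n(A_n))$ on such pieces; splitting $A_n$ into finitely many injectivity pieces is not automatic for a general holomorphic map, so instead I would argue at the level of potentials. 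Locally write $\nu_2=dd^c u$ with $u$ continuous, so $\sigma_n^\varstar\nu_2=dd^c(u\circ\sigma_n)$ on $\sigma_n\inv$ of a coordinate chart; the point is that $u\circ\sigma_n$ is subharmonic with Laplacian comparable to $\nu_1$, which will be used in the next step.

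\medskip

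\noindent\textbf{Step 2: normality by a Montel/Zalcman-type argument.}
The cleanest route is: suppose $(\sigma_n)$ is not normal on $U$. Then on some subdisk $D(a,\rho)\Subset U$ the family fails to be equicontinuous for the spherical metric, so by Zalcman's lemma there are $a_n\to a$, $\rho_n\to 0^+$, such that $g_n(\zeta):=\sigma_n(a_n+\rho_n\zeta)$ converges locally uniformly to a non-constant meromorphic $g\colon\cc\to\pu$. Now I use the measure bound: for a fixed large disk $D(0,R)$, the measures $g_n^\varstar\nu_2 = \rho_n^{-\bullet}$-rescaled copies of $\sigma_n^\varstar\nu_2$ on $D(a_n,R\rho_n)$, which is $\asymp_c$ the rescaled $\nu_1$; since $\nu_1$ is a fixed finite measure, the mass of $\sigma_n^\varstar\nu_2$ on the shrinking disk $D(a_n,R\rho_n)$ tends to $0$, hence $g_n^\varstar\nu_2(D(0,R))\to 0$ for every $R$. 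But $g_n\to g$ locally uniformly with $g$ non-constant, so $g_n^\varstar\nu_2\to g^\varstar\nu_2$ weakly on $\cc$ (here one uses that $\nu_2$ has continuous potentials, so pullback is continuous under locally uniform convergence of the maps — write $\nu_2=dd^c u$, $u$ continuous, then $u\circ g_n\to u\circ g$ locally uniformly and $dd^c$ is weakly continuous), and $g^\varstar\nu_2$ is a non-zero measure because $g$ is non-constant and $\supp\nu_2=\supp\nu_2\neq\emptyset$ with $g(\cc)$ meeting $\supp\nu_2$ (as $g$ is non-constant, $g(\cc)$ omits at most two points, while $\supp\nu_2$ is infinite since $\nu_2$ is non-atomic of positive mass). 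This contradiction proves $(\sigma_n)$ is normal.

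\medskip

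\noindent\textbf{Step 3: non-constancy of limits.}
Let $\sigma = \lim \sigma_{n_k}$ be a limit function, and suppose $\sigma\equiv w_0$ is constant on $U$. Fix a ball $\overline B(x_0,r)\Subset U$ with $\nu_1(\overline B(x_0,r))=m>0$. Using $\sigma_n^\varstar\nu_2\asymp_c\nu_1$ at the level of potentials as above, $\int_{\overline B(x_0,r)} d\sigma_n^\varstar\nu_2\geq c\inv m$ for all $n$. But $\sigma_{n_k}\to w_0$ uniformly on $\overline B(x_0,r)$, so writing $\nu_2=dd^c u$ near $w_0$ with $u$ continuous, $u\circ\sigma_{n_k}\to u(w_0)$ uniformly, whence $\sigma_{n_k}^\varstar\nu_2 = dd^c(u\circ\sigma_{n_k})\to 0$ weakly on a neighborhood of $\overline B(x_0,r)$; this forces $\int_{\overline B(x_0,r)}d\sigma_{n_k}^\varstar\nu_2\to 0$ (up to taking slightly smaller balls to avoid boundary mass, harmless since $\nu_1$ is non-atomic on the relevant scale — or simply note $\limsup$ of the mass on a compact is $\leq$ mass of the limit on it). This contradicts the uniform lower bound $c\inv m>0$. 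Hence every limit function is non-constant.

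\medskip

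\noindent\textbf{Main obstacle.}
I expect the delicate point to be the passage from the pullback relation $\sigma_n^\varstar\nu_2\asymp_c\nu_1$ to a clean inequality on the masses $\nu_2(\sigma_n(\cdot))$, because $\sigma_n$ need not be injective and its critical values vary with $n$. The way around it is to never leave the potential-theoretic formulation: work with $\nu_2=dd^c u$ for $u$ continuous, so that $\sigma_n^\varstar\nu_2 = dd^c(u\circ\sigma_n)$ is \emph{by definition} of the pullback, and exploit that $dd^c$ of a uniformly convergent sequence of subharmonic functions converges weakly. Everything else — Zalcman's lemma, weak convergence of measures under locally uniform convergence of maps with continuous-potential targets, and the positivity of $g^\varstar\nu_2$ for non-constant $g$ — is then routine.
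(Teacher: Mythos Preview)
Your proof is correct and follows essentially the same route as the paper: Zalcman's lemma for normality, and integration by parts against the continuous potential of $\nu_2$ to rule out constant limits. The only difference is cosmetic --- in the Zalcman step the paper picks a disk on which the Zalcman limit map is univalent and compares $\nu_2(\sigma_\infty(D))>0$ with the $\nu_1$-mass of the shrinking preimage, whereas you argue via weak convergence $g_j^\varstar\nu_2\to g^\varstar\nu_2\neq 0$; both hinge on $\nu_1$ being non-atomic (which you should state explicitly at the outset, deduced from $\nu_1\le c\,\sigma_n^\varstar\nu_2=dd^c(u\circ\sigma_n)$ with $u$ continuous, rather than bury it in Step~3), and your Step~1 can be dropped.
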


 From Lemma~\ref{lem:normal} and Levin's theorem we get:  
 
\begin{cor}\label{cor:levin}
Let $f$ be a non-integrable rational map of degree greater than 1,   and  
$(\sigma_n)$ be a sequence of 
local symmetries of $J_f$ as in Theorem~\ref{thm:levin}. If in addition 
$(\sigma_n)^\varstar \mu \asymp_c \mu$ for some uniform $c>0$, then 
the family  $(\sigma_n)$ is finite. 
\end{cor}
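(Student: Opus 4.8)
The plan is to combine the two preceding results in the obvious way and then rule out an infinite normal subfamily with non-constant limits. First I would argue by contradiction: suppose the family $(\sigma_n)$ of local symmetries of $J_f$ is infinite. By hypothesis we have a uniform bound $(\sigma_n)^\varstar \mu_f \asymp_c \mu_f$ on $U$ for a single constant $c>0$. The measure $\mu_f$ has continuous local potentials (this is classical for the measure of maximal entropy of a rational map, and is recorded implicitly in \S\ref{subs:levin}), so we may apply Lemma~\ref{lem:normal} with $\nu_1 = \nu_2 = \mu_f$: the family $(\sigma_n)$ is normal on $U$ and every limit map of every convergent subsequence is non-constant.

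Next, extract from the infinite family $(\sigma_n)$ a subsequence converging locally uniformly on $U$; by the previous paragraph its limit is non-constant. Now one must be slightly careful: Levin's theorem (Theorem~\ref{thm:levin}) requires \emph{all} limit functions of the family to be non-constant, which is exactly what Lemma~\ref{lem:normal} guarantees, and it requires the $\sigma_n$ themselves to satisfy $\sigma_n^{-1}(J_f)\cap U = J_f \cap U$ (and $\sigma_n^\varstar \mu_f \propto \mu_f$ in the smooth case), which holds since each $\sigma_n$ is by definition a local symmetry of $J_f$ in the sense of Theorem~\ref{thm:levin}. Hence Theorem~\ref{thm:levin} applies and forces $f$ to be integrable, contradicting the assumption that $f$ is non-integrable. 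Therefore $(\sigma_n)$ cannot be infinite, i.e.\ the family is finite.

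There is essentially no hard part here: the statement is a formal consequence of Lemma~\ref{lem:normal} (which upgrades the Radon--Nikodym bound to normality with non-constant limits) and Theorem~\ref{thm:levin} (which says such a normal family with non-constant limits can only exist in the integrable case). The only point deserving a line of justification is that $\mu_f$ indeed has continuous potentials, so that Lemma~\ref{lem:normal} is applicable with $\nu_2 = \mu_f$; this is standard. If one wanted to be scrupulous, one should also note that ``finite'' here should be understood modulo the implicit identification in the statement — a family indexed by $\nn$ all of whose members are distinct would be infinite, and it is this that is excluded; if members repeat, one passes to the set of distinct values, which by the argument above is finite.
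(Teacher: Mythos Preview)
Your proposal is correct and matches the paper's approach exactly: the paper simply states that Corollary~\ref{cor:levin} follows from Lemma~\ref{lem:normal} and Theorem~\ref{thm:levin} without giving further details, and what you have written is precisely the intended two-line argument spelled out in full. The only extra care you take---noting that $\mu_f$ has continuous potentials so that Lemma~\ref{lem:normal} applies---is appropriate and standard.
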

\begin{proof}[Proof of Lemma \ref{lem:normal}] 
We may assume that $U$ is a disk.  
Note that our assumption implies that $\nu_2$ gives no mass to points, so neither does 
$\nu_1$ on $U$.

For the first assertion,   assume by contradiction that $(\sigma_n)$ is not normal in $U$. Then by the Zalcman reparameterization lemma there exists a sequence $(a_n)$ converging to some $a^\varstar\in U$, a sequence 
$r_n \to 0$ and an extraction $n_j$ such that  the sequence or meromorphic functions
$(\zeta\mapsto \sigma_{n_j}(a_{n_j}+r_{n_j}\zeta))$ 
converges uniformly on compact subsets  to a non-constant entire mapping $\sigma_\infty:\cc\to\pu$.
Since $\nu_2$ gives no mass to points there is a regular value of $\sigma_\infty$ in $\supp(\nu_2)$. 
In particular there is a  disk $D'$  on which 
 $\sigma_\infty$ is univalent, together with a smaller disk 
 $D\Subset D'$ such that $\nu_2(\sigma_\infty(D))>0$. 
 Now  $\sigma_{n_j}$ is univalent   on $a_{n_j} + r_{n_j} D$ for large $j$, and  
 $ \sigma_{n_j}(a_{n_j} + r_{n_j} D)$ converges to $\sigma_\infty(D)$. Therefore if $D$ was further chosen so that $\nu_{2}(\fr(\sigma_\infty(D))) = 0$ 
 we infer that   
 \[
\nu_1(\set{a^\varstar})
=
 \lim_n
 \nu_{1} (a_{n_j} + r_{n_j} D)
\ge c \nu_{2}(\sigma_\infty(D))>0\]
 which is the desired   contradiction.
 
For the second assertion we again argue by contradiction and assume that some subsequence $(\sigma_{n_j})$ 
converges to a constant $a$  on $U$. Let $\varphi$ be a non-negative test function in $U$  such that 
 $ \int \varphi \, d\nu_1 >0$. Let $g_1$ be a subharmonic potential for $\nu_1$ in $U$ and 
$g_2$ be a subharmonic potential for $\nu_2$ defined in a neighborhood of $a$. By assumption, we have
$dd^c(g_2\circ \sigma_n) \ge c \; dd^cg_1$.
Substracting a constant  we may assume that $g_2(a) = 0$. Then we have 
\[
0< c\, \int_U \varphi \, dd^c g_1 = \int_U \varphi\,  dd^c(g_2\circ \sigma_n) = \int_U (g_2\circ \sigma_n)\,  dd^c\varphi
\]
and 
\[
\abs{\int_U (g_2\circ \sigma_n) \, dd^c\varphi}\leq \norm{\varphi}_{C^2(U)} \norm{g_2\circ  \sigma_n}_{L^\infty (\supp (\varphi))}
\]
which tends to zero since $\sigma_n(\supp (\varphi))$ converges to $\set{a}$ and $g_2$ is continuous. This contradiction 
finishes the proof. 
\end{proof}

\begin{rmk}
The continuity of the potential of $g_2$ is essential in the second part of the proof, in particular 
assuming  that $\nu_2$ gives no mass to points is not enough
 to conclude. Indeed the measure $\nu = dd^c (-\log |\log |z||)$  
gives no mass to points and  satisfies $(\sigma_n)^\varstar \nu = \nu$ for $\sigma_n(z) = z^n$, while $z^n$ converges uniformly to 0 in a neighborhood of the origin. 
\end{rmk}

\subsection{Algebraization}

The core of the proof of Theorem~\ref{thm:globalizable}
 is the following  algebraization result, which will later be applied to (generalized) Poincaré-Koenigs linearization mappings.  
At this stage we do not claim   any invariance for  the implied algebraic curve.

\begin{prop}\label{prop:algebraization}
Let $f_1$ and $f_2$ be two  non-integrable  rational maps on $\pu$. 
Assume that $\psi_1$ and $\psi_2$ are 
entire maps $\cc\to \pu$ such that $(\psi_2)^\varstar\mu_{f_2} \asymp  (\psi_1)^\varstar\mu_{f_1}$; if either $J_{f_1}$ or $J_{f_2}$ is smooth then we further require that  $(\psi_2)^\varstar\mu_{f_2} \propto (\psi_1)^\varstar\mu_{f_1}$. 
Define the entire map $\Psi :\cc\to \pu\times \pu$ by $\Psi = (\psi_1, \psi_2)$. Then $\overline {\Psi(\cc)}$  
is an irreducible algebraic curve which is neither a vertical nor a horizontal line.
\end{prop}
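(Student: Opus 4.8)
The plan is to first show that $Z:=\overline{\Psi(\cc)}$ is an algebraic curve by producing a non-trivial polynomial relation between $\psi_1$ and $\psi_2$, and then to rule out the degenerate cases using the dynamical properties of $\mu_{f_1}$ and $\mu_{f_2}$. For the algebraicity, the natural approach is to compare growth: pull back the Fubini--Study form (or a suitable Kähler form) on $\pu\times\pu$ by $\Psi$ and estimate the area of $\Psi$ over large disks. Since $\mu_{f_i}$ has a continuous potential and is cohomologous to (a multiple of) the Fubini--Study measure, and since $(\psi_2)^\varstar\mu_{f_2}\asymp(\psi_1)^\varstar\mu_{f_1}$, the two coordinate projections of $\Psi$ have \emph{comparable} Nevanlinna characteristic functions $T(r,\psi_1)\asymp T(r,\psi_2)$, up to bounded error coming from the potentials. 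The key point is then to invoke the strong, quantitative form of the Ahlfors Five Islands Theorem (as announced in the introduction): either $\Psi(\cc)$ is contained in an algebraic curve, or the area growth forces one of the $\psi_i$ to be ramified in a way incompatible with the comparability of the characteristic functions. Concretely, I would argue that if $Z$ were not algebraic then $\Psi$ would be a non-degenerate entire curve in $\pu\times\pu$ whose image is Zariski dense, and use the fact that the defect relation / the control on $T(r,\psi_i)$ forces the existence of a dominant algebraic relation. (In the authors' setup this is presumably where Ahlfors islands enters, applied to $\psi_1$ or $\psi_2$ separately, to locate many univalent islands over the Julia set and thereby transfer mass estimates.)

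Granting that $Z$ is an irreducible algebraic curve, it remains to exclude that $Z$ is a vertical line $\{z_1=c\}$ or a horizontal line $\{z_2=c\}$. Suppose $Z=\{z_1=c\}$, so $\psi_1\equiv c$ is constant. Then $(\psi_1)^\varstar\mu_{f_1}=0$, hence by the comparability hypothesis $(\psi_2)^\varstar\mu_{f_2}=0$ as well. But $\psi_2$ is entire and non-constant (otherwise $\Psi$ is constant and $Z$ is a point, not a line), so by the first assertion applied with the roles reversed, or directly: a non-constant entire map $\cc\to\pu$ must hit the support of $\mu_{f_2}$ with positive pullback mass, since $\mu_{f_2}$ gives no mass to points and $J_{f_2}$ is not polar. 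Indeed, pick a regular value $q$ of $\psi_2$ lying in $\supp(\mu_{f_2})$ with $\mu_{f_2}$ giving positive mass to a small disk $B(q,\rho)$ whose boundary is $\mu_{f_2}$-negligible; then $\psi_2$ is univalent on some disk mapping onto a neighborhood of $q$, which forces $(\psi_2)^\varstar\mu_{f_2}>0$, a contradiction. The case $Z=\{z_2=c\}$ is symmetric. Finally, if $\psi_1$ were constant but $\psi_2$ non-constant we have just shown a contradiction, so in fact neither $\psi_i$ is constant, $\Psi$ is non-constant, and $Z$ is a genuine curve, not a point.

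I expect the main obstacle to be the algebraicity statement itself, i.e.\ the step that goes from "comparable Nevanlinna growth of the two coordinates" to "the image lies on an algebraic curve." The subtlety is that comparable growth of $\psi_1$ and $\psi_2$ does \emph{not} in general force an algebraic relation — two independent entire functions of the same order give a counterexample — so the dynamical hypothesis $(\psi_i)^\varstar\mu_{f_i}\asymp$ must be used in a strong way, not merely to compare characteristic functions but to compare the \emph{measures} pointwise. This is presumably where one uses that $\mu_{f_i}$ is the maximal entropy measure: the density $\tfrac{d(\psi_2)^\varstar\mu_2}{d(\psi_1)^\varstar\mu_1}$ being bounded above and below, combined with the functional/invariance equations $f_i^\varstar\mu_i=d_i\mu_i$, should propagate enough rigidity — via a renormalization or a current-pullback argument, and the Ahlfors islands input — to pin down an algebraic relation. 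I would also need to be careful that the proportionality (rather than mere comparability) assumption in the smooth case is exactly what is needed to avoid the failure mode exhibited in the Remark following Lemma 2.4, where $z\mapsto z^n$ preserves a non-atomic but non-rigid measure; the smooth Julia sets (circle, segment, sphere) are precisely the cases where the measure class alone is too weak and one must track the constant.
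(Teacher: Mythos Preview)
Your proposal correctly locates the difficulty in the algebraicity step and correctly notes that comparable Nevanlinna growth of $\psi_1$ and $\psi_2$ cannot by itself force an algebraic relation. But you do not close this gap, and the route you sketch (defect relations, characteristic-function comparison) is not the mechanism that works here. The two ingredients you are missing are Levin's finiteness theorem (used not on $f_1$ or $f_2$ directly but on local symmetries manufactured from the islands) and Siu's decomposition theorem for positive closed currents; the latter was flagged in the introduction precisely because it is what converts a local piece of graph into a global algebraic curve.

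The paper's argument runs as follows. After reducing to the transcendental case, one forms an Ahlfors current $T$ for $\Psi$ on $\pu\times\pu$ along a length--area sequence $R_j\to\infty$, and may assume $(\pi_1)_\varstar T\neq 0$. Quantitative Ahlfors islands applied to $\psi_1$ over a small disk $D$ meeting $J_{f_1}$ produces a positive proportion of univalent islands $\Omega_n\subset D(0,R_j)$; after discarding the few of large $\omega$-area, the normalized sum of the graphs $\Psi(\Omega_n)$ gives a current $S$ with $0<S\le T$. Each island yields a map $\sigma_n:=\psi_2\circ(\psi_1\rest{\Omega_n})^{-1}\colon D\to\pu$, and the hypothesis $(\psi_2)^\varstar\mu_2\asymp_c(\psi_1)^\varstar\mu_1$ translates into $\sigma_n^\varstar\mu_2\asymp_c\mu_1$ with a \emph{uniform} constant $c$. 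By Lemma~\ref{lem:normal} the family $(\sigma_n)$ is normal with no constant limits; composing with a fixed $\sigma_1^{-1}$ produces local symmetries $\tau_n$ of $J_{f_2}$ with $\tau_n^\varstar\mu_2\asymp_{c^2}\mu_2$, and Corollary~\ref{cor:levin} forces this family to be \emph{finite}. Hence the graphs $\Psi(\Omega_n)$ lie on finitely many fixed graphs $\Delta_\ell$ over $D$, so $S=\sum_\ell s_\ell[\Delta_\ell]$; choosing $\ell$ with $s_\ell>0$ gives $T\ge s_\ell[\Delta_\ell]$, and Siu's theorem extends $\Delta_\ell$ to a global algebraic curve $\Gamma\subset\pu\times\pu$, which then contains $\Psi(\cc)$ by analytic continuation. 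Non-verticality and non-horizontality fall out of the construction: $\Gamma$ contains a graph over the first coordinate, and that graph is non-horizontal because the $\sigma_n$ have no constant limits. Your separate argument for the degenerate cases is essentially correct but is not needed.
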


Notice that under our assumptions, $J_{f_1}$ is smooth if and only if $J_{f_2}$ is also smooth.

\begin{proof} 
As a preliminary step, let us observe  that
 if $\psi_1$ and $\psi_2$ are rational, then $\Psi (\cc)$ is an algebraic curve 
 since the transcendence degree of $\cc(T)$ over $\cc$ is $1$. Another argument goes by using 
 Remmert's Proper Mapping Theorem and the GAGA principle. 
 So without loss of generality 
 we may assume that $\Psi$
 is transcendental. 

 \noindent{\bf Step 1:}  construction of inverse branches and geometry of the Ahlfors currents.

 Let $\omega_{\pu}$ be a Fubini-Study form on $\pu$, normalized by 
 $\int_\pu \omega_\pu =1$  and set $\omega  = \pi_1^\varstar \omega_{\pu}  + 
\pi_2^\varstar \omega_{\pu}$.  
For any $R>0$, set
\[\area(\Psi(D(0, R))) 
 := 
\int_{D(0, R)}\!\! \Psi^\varstar \omega \text{ , and }\length (\fr \Psi(D(0, R))) 
 :=
\int_{\fr \dd}\!\! |\Psi'(R e^{i\theta})|_\omega R d\theta.\]
Since $\Psi$ is transcendental, $\area(\Psi(D(0, R)))\to\infty$ when $R\to\infty$. 
By the Ahlfors isoperimetric inequality 
(see \cite[\S VI.5]{tsuji} or \cite{brunella})
there exists a sequence $R_j\to \infty$  
such that 
\[\length (\fr \Psi(D(0, R_j))) = \mathrm{o} \big({ \area(\Psi(D(0, R_j)))}\big).\]
Any cluster value 
of the sequence of positive currents
$$T_j:= \unsur{\area(\Psi(D(0, R_j)))} [\Psi(D(0, R_j))]$$ is by definition an Ahlfors current associated to $\Psi$. 
Fix such an Ahlfors current $T$. 
Then $T$ is a positive closed $(1,1)$ current in $\pu\times \pu$ satisfying $\int T\wedge \omega =1$
so there exists $i\in \set{1, 2}$ such that $(\pi_i)_\varstar T\neq 0$, or equivalently $\langle [T], [\pi_i^\varstar \omega_{\pu}]\rangle\neq 0$, 
where $[\cdot]$ denotes the  class in $H^{1,1}(\pu\times \pu)$ and $\langle \cdot, \cdot\rangle$ is the intersection pairing. Without loss of generality we may assume $i=1$.

We now apply Ahlfors' theory of covering surfaces, in the spirit of \cite[\S 7]{bls}, with an additional twist 
inspired from \cite{lamin}. 
Fix any integer $q\geq 5$ and consider $q$ disks
     $D_i$ with disjoint closures,  intersecting $J_1$. 
For every $1\leq i\leq q$, let $ N_j(D_i)$ be the number of 
univalent inverse branches (``good islands'') of $\psi_1$ over   $D_i$ contained in 
$D(0, R_j)$. We label the corresponding  components of $\psi_1\inv(D_i)$ 
as $(\Omega_{i, n})_{n\geq 1}$ in such a way that 
for $1\leq n\leq N_j(D_i)$, $\Omega_{i, n}\subset D(0, R_j)$. Note that at most one of the $\Omega_{i, n}$ contains the origin so we may assume that $0\notin \Omega_{i, n}$. 
Then by Ahlfors' theorem, 
$$\sum_{i=1}^q N_j(D_i)\geq (q-4) { \area_{\pu}(\psi_1(D(0, R_j)))} - h \,
\length_{\pu}(\psi_1(D(0, R_j))),$$ where the area and length are computed with respect to $\omega_{\pp^1}$,
and $h$ is a geometric constant depending only on the disks $D_i$
 (see \cite[Theorem~VI.4]{tsuji}). 
 
Since
 \[
0< \int (\pi_1)_\varstar T \wedge \omega_{\pp^1}
 =
 \lim_{j\to\infty} \frac{\area(\psi_1(D(0, R_j)))}{\area(\Psi(D(0, R_j)))}
 \]
  there exists a constant $C_1$ such that 
 for every $j$, 
 \[\area(\Psi(D(0, R_j)))\leq C_1 \area_{\pu}(\psi_1(D(0, R_j)));\] 
 in particular $\psi_1$ is transcendental. 
 The number of good islands  contained in 
$D(0, R_j)$ whose volume (relative to $\omega$)  is at least $1/2$ is bounded from above
 by $2\area(\Psi(D(0, R_j))$, which is itself bounded by  $2C_1 \area_{\pu}(\psi_1(D(0, R_j)))$.  
Let us discard these components and denote by $ N_j'(D_i)$ the number of remaining ones.  Since these 
components have volume bounded by $1/2$, by Bishop's compactness theorem (see, e.g., \cite[Lemma 3.5]{lamin})
 they form a normal family.  
If $q$ was  chosen so that  $q>4+2C_1$ we infer that 
$$\sum_{i=1}^q N_j'(D_i)\geq (q-4-2C_1) { \area_{\pu}(\psi_1(D(0, R_j)))} - h \,
\length_{\pu}(\psi_1(D(0, R_j))).$$
 
 Extract a further subsequence of   $(R_j)$ (still denoted by $(R_j)$) so that 
 for a fixed $i\in \set{1, \ldots ,q}$
\begin{equation}\label{eq:ahlfors}
N_j'(D_i) \geq \frac{q-4-2C_1}{q}  { \area_{\pu}(\psi_1(D(0, R_j)))} - \frac{h}{q} 
\length_{\pu}(\psi_1(D(0, R_j))),
\end{equation}
and put $D = D_i$ and $\Omega_{n} = \Omega_{i,n}$. 
Let 
$$S_j = \unsur{\area(\Psi(D(0, R_j)))} \sum_{n=0}^{N_j'(D)} \left[ \Psi(\Omega_n) \right] $$
which is a sum of integration currents of graphs over $D$. By   
\eqref{eq:ahlfors}, 
we have that  $S_j\leq T_j$ and we may estimate the mass $\m(S_j) := \int S_j \wedge \omega$ as follows: $$ \m(S_j)  \geq    \int S_j \wedge \pi_1^\varstar \omega  
\geq  \frac{q-4-2C_1}{q} \cdot   \frac{\area_{\pu}(\psi_1(D(0, R_j)))}{ \area(\Psi(D(0, R_j)))}  - 
  \frac{h}{q} \cdot
\frac{\length_{\pu}(\psi_1(D(0, R_j)))}{{ \area(\Psi(D(0, R_j)))} }$$
hence
 $\liminf_j \m(S_j) \geq \frac{q-4-2C_1}{q} \m ((\pi_1)_\varstar T)>0$
and  any cluster value $S$ of the sequence $(S_j)$ satisfies 
$0<S\leq T$.

\medskip 

 \noindent{\bf Step 2:}  using the local symmetries to conclude.  
 
 To simplify notation, write $\mu_1= \mu_{f_1}$ and $\mu_2= \mu_{f_2}$.
For every $n$, define  
 $$\psi_{1, n}\inv := \lrpar{\psi_1\rest{\Omega_n}}\inv\colon D \overset{\sim}{\longrightarrow} \Omega_n$$ and 
 let 
 $\sigma_n := \psi_2\circ  \psi_{1, n}\inv$. 
 By construction $\sigma_n$ is defined in  $D $ with values in  $\pu$.
Writing
   $\psi_2^\varstar \mu_{2}  = h \psi_1^\varstar \mu_{1}$, with $c\inv\leq h\leq c$  for some $c>0$, 
 we infer that  
\begin{align} \label{eq:sigma_n_mu2}
 \sigma_n^\varstar \mu_2&= 
 (\psi_{1,n})_\varstar (\psi_2^\varstar \mu_{2}) 
  =  (\psi_{1,n})_\varstar (h \psi_1^\varstar\mu_1)  \\
  &\notag= \lrpar{h\circ \psi_{1, n}\inv  }   
  (\psi_{1,n})_\varstar  \psi_1^\varstar \mu_1
 =  \lrpar{h\circ \psi_{1, n}\inv   }
 \mu_1
  \asymp_c  \mu_1,
  \end{align}
so by  Lemma \ref{lem:normal}, $(\sigma_n)$ is a normal family
  and its limiting maps are non-constant.
   
   \begin{rmk}\label{rmk:normal_components}
Note that  the  normality of the family $(\sigma_n)$ was already obtained in  Step 1, so
 the  quasi-preservation of the measure is only used 
  to guarantee  that its normal  limit  are non-constant. 
 \end{rmk}
   
Now observe that the maps $\sigma_n$ give rise to local symmetries of $J_{2}$: indeed we can 
pick a subdisk $D'$ intersecting $J_{1}$ on which $\sigma_1$ is univalent, and define a sequence of local symmetries of 
$J_{2}$ by putting $\tau_n = \sigma_n\circ \lrpar{\sigma_1\rest{D'}}\inv$. 
These are holomorphic map from 
$\sigma_1(D') $ to $\pu$  satisfying the relation 
$\tau_n^\varstar\mu_{2}  \asymp_{c^2} \mu_{2}$, where $c$ is as in~\eqref{eq:sigma_n_mu2}.
If in addition $J_{f_1}$ and $J_{f_2}$ are smooth, arguing as in~\eqref{eq:sigma_n_mu2} we  further 
deduce   that $\tau_n^\varstar\mu_{2}  \propto \mu_{2}$.
Thus it follows from   Corollary~\ref{cor:levin}  that the family $(\tau_n)$ is   finite, hence so does the family $(\sigma_n)$. From this we infer that the graphs 
$\Psi(\Omega_n)$ are contained in finitely many graphs over $D$, therefore $S_j$ is an 
 integration current over a fixed finite union of graphs $(\Delta_\ell)$ over $D$, independent of $j$, namely 
 $S_j  = \sum_{\ell} s_{\ell, j}[\Delta_\ell]$.
Extracting a converging subsequence, we get a current 
$S = \sum   s_{\ell }[\Delta_\ell]$ 
supported on the same family of   graphs and from Step 1 we know that $0< S\leq T$. 
    Note that none of these graphs is horizontal because
  $\psi_2^\varstar \mu_{2} \asymp   \psi_1^\varstar \mu_{1}$.
 
With the above notation, fix $\ell$ such that $s_\ell>0$. Then $T\geq s_\ell [\Delta_\ell]$. 
 By Siu's decomposition theorem (see \cite[(2.18)]{MR2978333}) there exists an 
 analytic, hence algebraic, subvariety $\Gamma$ 
of $\pu\times \pu$, extending $\Delta_\ell$, 
 such that $T\geq s_\ell [\Gamma]$.   
 Since $\Delta_\ell\subset \Psi(\cc)$ by construction and 
 $\Delta_\ell\subset \Gamma$, by analytic continuation $\Psi(\cc)$ is contained in $\Gamma$. 
 Therefore we conclude that 
$\overline  {\Psi(\cc)}$ is an   algebraic curve, which is obviously irreducible, and which cannot be 
neither a vertical line because it contains a graph over the first coordinate, nor a horizontal line because 
this graph was shown to be non-horizontal. The proof is complete. 
\end{proof}

\begin{rmk}\label{rmk:transcendental}
Note that if $\psi_1$ is transcendental, then so does $\psi_2$. 
Indeed by Proposition~\ref{prop:algebraization} 
there exists  a polynomial $P\in \cc[x,y]$ such that  $P(\psi_1, \psi_2) =0$. If $\psi_2$ is algebraic, 
it follows that   $\psi_1^{-1}(z_1)$ is finite for any $z_1\in \cc$
which is contradictory.
\end{rmk}

\subsection{Local isomorphisms and Poincaré-Koenigs functions}
Let $f$ be a rational map  of degree $d\geq 2$, and $p$ be a repelling fixed point. 
Denote by 
 $\lambda = f'(p)$ its multiplier.
Then $f $ is linearizable in a neighborhood of $p$, consequently there exists a unique holomorphic map
$\psi_{(f,p)} \colon \cc\to \pu$  such that $\psi_{(f,p)}(0)=p$, $\psi_{(f,p)}'(0)=1$ and for every $\zeta\in \cc$, 
\begin{equation}\label{eq:poincare}
f  \circ \psi_{(f,p)}(\zeta)  = \psi_{(f,p)}(\la \zeta).
\end{equation} 
This map is called the Poincaré-Koenigs linearizing map of $f$.

\begin{lem}\label{lem:generalized_poincare}
Let $\chi \colon (\cc,0) \to (\pu,p)$ be a germ of  non-constant holomorphic map 
satisfying the functional equation 
 \begin{equation}\label{eq:poincare_generalized}
f \circ \chi(\zeta)  = \chi(\kappa \zeta) \text{ for all }\zeta\in \cc \text{ and some } \kappa \in \cc. 
\end{equation} 
If we have $\chi(\zeta) = \beta \zeta^l + O(\zeta^{l+1})$ with $l\ge1$ and $\beta\neq0$, then 
$\kappa^\ell = \lambda$ and $\chi(\zeta) = \psi_{(f,p)}( \beta\zeta^\ell)$.
\end{lem}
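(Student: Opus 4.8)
The plan is to exploit the uniqueness built into the Poincaré--Koenigs map $\psi_{(f,p)}$ together with the rigidity of the functional equation \eqref{eq:poincare_generalized}. First I would record the algebraic constraint coming from comparing leading terms: plugging $\chi(\zeta) = \beta\zeta^\ell + O(\zeta^{\ell+1})$ into $f\circ\chi(\zeta) = \chi(\kappa\zeta)$ and using $f(w) = p + \lambda(w-p) + O((w-p)^2)$ near the fixed point $p$ (after translating $p$ to $0$ if convenient), the lowest-order term on the left is $\lambda\beta\zeta^\ell$ and on the right is $\beta\kappa^\ell\zeta^\ell$, so $\kappa^\ell = \lambda$. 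In particular $|\kappa|>1$, so $\kappa$ is itself a repelling multiplier-type number and the map $\zeta\mapsto\kappa\zeta$ is an expanding linear germ.

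Next I would construct the candidate identity. Set $\eta(\zeta) := \psi_{(f,p)}(\beta\zeta^\ell)$. Using the defining equation \eqref{eq:poincare} for $\psi_{(f,p)}$, one checks directly that $f\circ\eta(\zeta) = f\circ\psi_{(f,p)}(\beta\zeta^\ell) = \psi_{(f,p)}(\lambda\beta\zeta^\ell) = \psi_{(f,p)}(\beta(\kappa\zeta)^\ell) = \eta(\kappa\zeta)$, so $\eta$ satisfies the same functional equation \eqref{eq:poincare_generalized} with the same $\kappa$, and moreover $\eta(\zeta) = \beta\zeta^\ell + O(\zeta^{\ell+1})$ since $\psi_{(f,p)}'(0)=1$. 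Thus $\chi$ and $\eta$ are two germs with the same leading term, both satisfying $g\circ(\kappa\,\cdot\,) = f\circ g$. The goal is to show $\chi = \eta$.

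The main step — and the place where the real work lies — is a uniqueness argument for solutions of \eqref{eq:poincare_generalized} with prescribed leading jet. I would argue by induction on the Taylor coefficients: write $\chi(\zeta) = \eta(\zeta) + \sum_{k>\ell} c_k\zeta^k$ and let $m$ be the smallest index with $c_m \neq 0$ (seeking a contradiction). Comparing the $\zeta^m$-coefficients on both sides of $f\circ\chi = \chi(\kappa\,\cdot\,)$: on the right the relevant term contributes $\kappa^m c_m$, while on the left, expanding $f$ around the value $\eta(\zeta)$ and using that $\chi - \eta$ has order $m > \ell$, the linear part of $f$ at $p$ produces $\lambda c_m$ and all other contributions at order $\zeta^m$ involve only coefficients $c_k$ with $k<m$, hence vanish by minimality of $m$. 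This forces $\kappa^m c_m = \lambda c_m$, i.e. $\kappa^m = \lambda = \kappa^\ell$; since $|\kappa|>1$ this gives $m = \ell$, contradicting $m>\ell$. (One should be slightly careful expanding $f$ around the \emph{moving} point $\eta(\zeta)$ rather than around $p$; the clean way is to set $u = \chi - \eta$, note $f(\eta + u) = f(\eta) + f'(\eta)\,u + O(u^2)$, observe $f(\eta(\zeta)) = \eta(\kappa\zeta)$, and conclude that $u$ satisfies $u(\kappa\zeta) = f'(\eta(\zeta))\,u(\zeta) + O(u(\zeta)^2)$ with $f'(\eta(0)) = f'(p) = \lambda$, then extract the lowest-order term.) Therefore $u \equiv 0$, i.e. $\chi(\zeta) = \psi_{(f,p)}(\beta\zeta^\ell)$, which is the claim. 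The only genuine subtlety to watch is that $\lambda \neq 0$ (automatic since $p$ is repelling, so $|\lambda|>1$) and that the $O(u^2)$ terms cannot interfere at the critical order, which holds because $u$ has order $\ge m > \ell \ge 1$ so $u^2$ has order $\ge 2m > m$.
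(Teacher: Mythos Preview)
Your argument is correct. The leading-term comparison for $\kappa^\ell=\lambda$ is identical to the paper's, and your uniqueness step via the difference $u=\chi-\eta$ and lowest-order coefficient matching is sound (the care you take with $f'(\eta(\zeta))=\lambda+O(\zeta^\ell)$ is exactly what is needed to ensure the order-$m$ term on the left really is $\lambda c_m$).

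The paper takes a closely related but cleaner route: rather than comparing $\chi$ to the candidate $\eta$ and analyzing their difference, it pre-composes with the local inverse of the linearizer, setting $\widetilde\chi:=\psi_{(f,p)}^{-1}\circ\chi$. The functional equation \eqref{eq:poincare_generalized} combined with \eqref{eq:poincare} immediately yields the \emph{linear} relation $\widetilde\chi(\kappa\zeta)=\lambda\,\widetilde\chi(\zeta)$, and a one-line power-series comparison (using that $\kappa^m=\lambda=\kappa^\ell$ with $|\kappa|>1$ forces $m=\ell$) gives $\widetilde\chi(\zeta)=\beta\zeta^\ell$. This avoids the nonlinear remainder $O(u^2)$ and the ``moving base point'' issue altogether, at the cost of invoking the local invertibility of $\psi_{(f,p)}$. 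Your approach has the minor advantage of being self-contained (it does not pass through $\psi_{(f,p)}^{-1}$), but the paper's conjugation trick is the more efficient packaging of the same core identity $\kappa^m=\kappa^\ell\Rightarrow m=\ell$.
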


Any function $\chi$ satisfying~\eqref{eq:poincare_generalized} will be referred to as a \emph{generalized Poincaré-Koenigs map}. 

\begin{proof}
The expansion of $f \circ \chi(\zeta)$ at the origin together with~\eqref{eq:poincare_generalized} force
 $\lambda =\kappa^l$. 
Locally at $0$, $\psi_{(f,p)}$ is invertible, so that we may consider the holomorphic germ
$\widetilde \chi  := \psi_{(f,p)}\inv\circ\chi$.
Observe that 
\[
\widetilde \chi (\kappa \zeta)
=
\psi_{(f,p)}^{-1} \circ f  \circ \chi  (\zeta) 
=
\lambda \widetilde \chi (\zeta)~.
\]
Expanding $\widetilde \chi$ in power series at $0$ yields
$\widetilde \chi(\zeta)  =  \beta \zeta^\ell$. The proof is complete.
\end{proof}

Recall that for two rational maps $f_i$,  $i=1, 2$ we write $\mu_i = \mu_{f_i}$, $J_i = J_{f_i}$, etc.

 \begin{prop}\label{prop:local_isomorphism_periodic}
 Let $f_1$ and $f_2$ be two non-integrable rational maps of respective degrees
  $d_1, d_2\ge 2$, and   $U$ be any connected open set intersecting $J_{1}$. 
 Suppose $\sigma \colon U \to \pu$ is a non-constant holomorphic map sending a 
 repelling fixed point $p_1$ for $f_1$
to a fixed point $p_2$ for $f_2$. Let $\lambda_1$ and $\lambda_2$ be the respective multipliers of $p_1$ and $p_2$, 
 and set $\ell = \deg_{p_1}(\sigma) \ge 1$. 

Suppose that:
\begin{enumerate}
 \item either $J_{2}$ is not smooth; 
 \item or $J_{2}$ is   smooth and $\sigma^\varstar(\mu_{2}) \propto \mu_{1}$. 
\end{enumerate}
Then the point $p_2$ is repelling, and there exist $a,b\in\nn^*$ such that 
 $\lambda_1^{a \ell} =\lambda_2^{b}$ and   $f_2^b \circ \sigma = \sigma \circ f_1^a$.
Moreover, for $\chi_1 = \psi_{(f_1, p_1)}$, 
  $\chi_2 := \sigma \circ \chi_{1} $ extends to a 
  generalized Poincaré-Koenigs map for $f_2$
 satisfying $\chi_1\inv(J_{1}) = \chi_2\inv(J_{2})$.

 If in Case (1) we further assume: 
 \begin{enumerate}
 \item[(1')] $J_{2}$ is not smooth  and  $\sigma^\varstar \mu_{2} \asymp \mu_{1}$
 \end{enumerate} 
 then we have the identities  $d_1^{ a} = d_2^{b}$
and 
 $\chi_1^\varstar  \mu_1 \asymp \chi_2^\varstar \mu_2$ (resp. $\chi_1^\varstar  \mu_1 
 \propto \chi_2^\varstar \mu_2$ in Case (2)). 
 \end{prop}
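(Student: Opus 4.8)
The plan is to build the iterate relation $f_2^b\circ\sigma=\sigma\circ f_1^a$ from a \emph{dynamical} self-intersection of the curve produced by Proposition~\ref{prop:algebraization}, then read off all the remaining identities from the functional equation. First I would set $\chi_1=\psi_{(f_1,p_1)}$ and $\chi_2=\sigma\circ\chi_1$ (which is defined near $0$ since $\chi_1(0)=p_1$ and $\sigma$ is holomorphic near $p_1$), and check the growth hypotheses of Proposition~\ref{prop:algebraization} for the pair $(\chi_1,\chi_2)$: the equality $\chi_1^\varstar\mu_1=\chi_2^\varstar\mu_2$ on the (a priori small) disk of definition of $\chi_2$ follows because $\sigma^\varstar\mu_2\asymp\mu_1$ (resp.\ $\propto$ in the smooth case) and $\chi_1^\varstar\mu_1$ is the pullback of $\mu_1$ under the linearizer; but $\chi_2$ is only a germ, so the first genuine task is to \emph{extend} $\chi_2$ to an entire curve. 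This is exactly what Lemma~\ref{lem:generalized_poincare} is for once we know $\chi_2$ satisfies a functional equation of the form~\eqref{eq:poincare_generalized}; so logically the functional equation must be produced first, before the global extension, and the two steps are intertwined.

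The mechanism I would use to get the functional equation is the following. Apply Proposition~\ref{prop:algebraization} to $\Psi=(\chi_1,\chi_2)$ on the domain where both are defined — or rather, first extend using the finitely-many-branches argument: the curve $Z:=\overline{\Psi(\cc)}$ (here $\cc$ should be read as the domain of $\chi_2$, extended as far as possible) is algebraic and is a graph near $(p_1,p_2)$. Now the key observation: the linearization equation~\eqref{eq:poincare} says $f_1\circ\chi_1(\zeta)=\chi_1(\lambda_1\zeta)$, so the map $(x,y)\mapsto(f_1(x),?)$ should send $Z$ into another algebraic curve $Z'$ which is again a graph near $(p_1,p_2)$ over the first coordinate, and $Z'$ contains $\Psi(\lambda_1\cdot)=(\chi_1(\lambda_1\zeta),\sigma\circ\chi_1(\lambda_1\zeta))$. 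The point is that $f_1$ acts on the finite set of algebraic curves through $(p_1,p_2)$ which arise this way (there are finitely many because $\sigma$ has finitely many local branches in the relevant sense, via Levin/Corollary~\ref{cor:levin}), and similarly $f_2$ acts on the second coordinate; by pigeonhole some iterate $(f_1^a,f_2^b)$ stabilizes a curve through $(p_1,p_2)$ that is a graph of (a branch of) $\sigma$, which on the level of the linearizing parametrization reads $f_2^b\circ\sigma\circ\chi_1(\zeta)=\sigma\circ\chi_1(\lambda_1^a\zeta)$ near $0$; since $\chi_1$ is a local biholomorphism near $0$ this is $f_2^b\circ\sigma=\sigma\circ f_1^a$ on a neighborhood of $p_1$, hence on $U$ by analytic continuation. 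Along the way one gets $\chi_2(\kappa\zeta)=f_2^b\circ\chi_2(\zeta)$ with $\kappa=\lambda_1^a$, so Lemma~\ref{lem:generalized_poincare} applies with $f_2$ replaced by $f_2^b$: it forces $\kappa^\ell=\lambda_2^b$, i.e.\ $\lambda_1^{a\ell}=\lambda_2^b$, shows $\chi_2$ extends entirely as $\chi_2(\zeta)=\psi_{(f_2^b,p_2)}(\beta\zeta^\ell)$, and in particular shows $p_2$ is repelling (its multiplier under $f_2^b$ is $\lambda_1^{a\ell}$, of modulus $>1$, hence $|\lambda_2|>1$).

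Once $f_2^b\circ\sigma=\sigma\circ f_1^a$ is established, the identity $\chi_1^\varstar\mu_1=\chi_2^\varstar\mu_2$ (resp.\ $\asymp$, $\propto$) propagates from the germ to all of $\cc$ by the functional equations: $\chi_i^\varstar\mu_i$ is, up to the action of multiplication by $\lambda_1^a$ (resp.\ $\kappa$) and the invariance $f_i^\varstar\mu_i=d_i\mu_i$, self-similar, so a bound valid on a small disk spreads to $\cc$ by iterating~\eqref{eq:poincare} and its analogue for $\chi_2$. For the degree identity $d_1^a=d_2^b$ in Case~(1'): from $\chi_1\circ(\lambda_1^a\cdot)=f_1^a\circ\chi_1$ we get $(\lambda_1^a\cdot)^\varstar\chi_1^\varstar\mu_1=d_1^a\,\chi_1^\varstar\mu_1$, and likewise $(\kappa\cdot)^\varstar\chi_2^\varstar\mu_2=d_2^b\,\chi_2^\varstar\mu_2$; but $\kappa=\lambda_1^a$ (they scale the $\zeta$-variable by the same factor once we pass to $\chi_2$, after absorbing $\beta$ and $\ell$ — more precisely $\chi_2(\zeta)=\psi_{(f_2^b,p_2)}(\beta\zeta^\ell)$ so the scaling on $\psi_{(f_2^b,p_2)}$ is by $\lambda_2^b=\lambda_1^{a\ell}$, matching the $\ell$-th power correctly), so comparing the two self-similarity relations for the $\asymp$-comparable measures $\chi_1^\varstar\mu_1$ and $\chi_2^\varstar\mu_2$ and using that these are nonzero forces $d_1^a=d_2^b$; one has to be slightly careful that an $\asymp$ comparison survives the scaling argument (iterate $n$ times: $d_1^{an}\chi_1^\varstar\mu_1\asymp d_2^{bn}\chi_2^\varstar\mu_2$ with uniform constant, which is only possible if $d_1^a=d_2^b$). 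The main obstacle I anticipate is the first part of the second paragraph: carefully setting up the ``$f_1$ acts on finitely many algebraic curves through $(p_1,p_2)$'' argument so that the iterate landing back on a \emph{graph-of-$\sigma$} curve (and not merely any stabilized curve) is legitimate — this is where Corollary~\ref{cor:levin}/Levin's rigidity and the compatibility of the two coordinates must be used delicately, and where one must rule out that the iterate only fixes a curve which is not the graph of a branch of $\sigma$ near $p_1$ (e.g.\ a vertical or horizontal component), using that $\Psi$ is genuinely the graph of $\sigma$ near $(p_1,p_2)$ by construction.
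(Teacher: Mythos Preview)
Your proposal has a genuine circularity that you partly sense but do not resolve. Proposition~\ref{prop:algebraization} is stated and proved for \emph{entire} maps $\psi_1,\psi_2\colon\cc\to\pu$: its proof relies on Ahlfors' isoperimetric inequality and the Five Islands Theorem applied to the entire curve $\Psi$, and these tools simply do not apply to the image of a bounded disk. So you cannot invoke it for the germ $\chi_2=\sigma\circ\chi_1$, and your parenthetical ``here $\cc$ should be read as the domain of $\chi_2$, extended as far as possible'' does not salvage the argument. In the paper's overall logic the order is the reverse of what you attempt: Proposition~\ref{prop:local_isomorphism_periodic} is proved \emph{first}, by purely local means, and one of its conclusions is precisely that $\chi_2$ extends to an entire generalized Poincar\'e--Koenigs map; only then is Proposition~\ref{prop:algebraization} applied (in the proof of Theorem~\ref{thm:globalizable}).

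Your fallback mechanism (``pigeonhole on finitely many curves through $(p_1,p_2)$ via Levin'') also breaks down. Interpreted as germs of graphs, the relevant family is $\tau_{n,m}=f_2^m\circ\sigma\circ f_1^{-n}$ (local branch fixing $p_1$). To apply Corollary~\ref{cor:levin} or Theorem~\ref{thm:levin} you need normality with non-constant limits; but the leading coefficient of $\tau_{n,m}$ at $p_1$ is (up to a fixed factor) $\lambda_2^{m}\lambda_1^{-n\ell}$, and without already knowing $|\lambda_2|>1$ you cannot balance $m$ against $n$ to keep this bounded away from $0$ and $\infty$. In particular your scheme cannot establish that $p_2$ is repelling, which is part of the conclusion. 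The paper avoids all of this by a direct local construction: normalize coordinates so that $\sigma(z)=z^\ell$, take an $\ell$-th root of $f_2\circ\sigma$ to produce a local biholomorphism $g_1$ at $p_1$ with $f_2\circ\sigma=\sigma\circ g_1$, observe that $g_1$ is a local symmetry of $J_1$ fixing $p_1$, and apply Lemma~\ref{lem:levin_prop1} to get that $g_1$ commutes with $f_1$. In the linearizing coordinate of $f_1$ this forces $g_1$ to be linear, and a discreteness argument (again via Levin, applied to a genuinely normal family with non-constant limits) gives $\lambda_1^a=\kappa_2^b$ with $a,b>0$; this simultaneously shows $|\lambda_2|>1$ and yields $f_1^a=g_1^b$, hence $\sigma\circ f_1^a=f_2^b\circ\sigma$. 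Your concluding paragraphs on propagating the measure comparison and deducing $d_1^a=d_2^b$ are essentially correct and match the paper, but they rest on the functional equation, which your argument does not deliver.
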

 
 \begin{rmk}
 If $p_1$ and $p_2$ are periodic of respective periods $m_1$ and $m_2$, applying this result to 
 $f^{m_1}$ and $f^{m_2}$ we get a similar conclusion, where the relations become 
 $\lambda_1^{a \ell} =\lambda_2^{b}$,   $f_2^{m_2b} \circ \sigma = \sigma \circ f_1^{m_1a}$, and
 $d_1^{m_1 a} = d_2^{m_2 b}$. 
 \end{rmk}

\begin{proof}
Choose local coordinates such that 
  $p_1 = p_2 = 0$ and $\sigma(z) = z^\ell$ for some $\ell \in \nn^*$. Fix any $\ell$-th root $\kappa_2$ of $\lambda_2$.
Then  we can write
 \[f_2\circ \sigma(z)   = \lambda_2 z^\ell + \hot =  \left(\kappa_2 z + \sum_{j\ge 2} a_j z^j\right)^\ell
 \]
and we set $g_1(z):=  \kappa_2 z + \sum_{j\ge 2} a_j z^j$ so that $f_2 \circ \sigma = \sigma \circ g_1$.
Note that $g_1$ is a local isomorphism at $p_1$ 
which locally satisfies  $g_1^{-1}(J_{1}) = J_{1}$   
  in Case~(1) and 
$g_1^\varstar \mu_{1} \propto \mu_{1}$
in Case~(2).
 
 Lemma \ref{lem:levin_prop1} implies that $f_1$ and $g_1$ commute.
  In the linearizing coordinate of $f_1$, the map $\widetilde g_1$ corresponding 
 to $g_1$ is a local biholomorphism satisfying 
 $\widetilde g_1(\lambda_1 \zeta) = \lambda_1 \widetilde g_1(\zeta)$. 
 Expanding $g_1$ in power series, and since $\lambda_1$ is not a root of unity,
 we obtain that $\widetilde g_1$ is linear: $\widetilde g_1(\zeta) = \kappa_2\zeta$. The subgroup generated by $
 \lambda_1$ and $\kappa_2$ in $\cc^\varstar$ must be   
  discrete otherwise by taking sequences $(k_j)$ and $(\ell_j)$ such that 
 $\lambda_1^{k_j}\kappa_2^{\ell_j} \to 1$ we would create an infinite
  normal  family  of local  symmetries of $J_1$ contradicting the fact that $f_1$ is not integrable. 
  It follows that  there is a relation of the form $\lambda_1^a=\kappa_2^{b}$ for some $a\in \nn$ and $b\in \zz\setminus\{0\}$. 
 Since $p_2\in J_2$, $\abs{\lambda_2}\geq 1$ so $\abs{\kappa_2}\geq 1$. 
 Since  $\widetilde g_1$  has infinite order, $\kappa_2$ is not a root of unity so   $a$ is positive.  
This implies that  $b$ is positive as well, hence   $\abs{\lambda_2}> 1$, i.e. $p_2$ is repelling. 
Thus we have shown that    there is a relation  of the form   $\lambda_1^{\ell a} = \lambda_2^{b}$,
 with $a,b>0$, as asserted.   
 Back to the initial coordinates, this means that $f_1^a = g_1^b$ so that
 $\sigma \circ f_1^a= \sigma \circ g_1^b = f_2^b \circ \sigma$. 
 
Now observe    that with $\chi_1 = \psi_{(f_1, p_1)}$ we have 
\[\sigma\circ\chi_{ 1}(\lambda_1^a \zeta) = \sigma \circ f_1^a (\chi_{ 1}(\zeta))   = \sigma \circ g_1^b (\chi_{ 1}(\zeta)) 
  = f_2^b\circ \sigma \circ\chi_{1}(\zeta), \] hence by~Lemma \ref{lem:generalized_poincare}
  locally we have 
  $\sigma\circ\chi_{ 1} (\zeta)=\psi_{(f_2, p_2)}(\zeta^\ell)$.  
  Set  $\chi_2 (\zeta):=\psi_{(f_2,p_2)}(\zeta^\ell)$, which by definition 
  is a generalized Poincaré-Koenigs map.
Locally near the origin we have  $\sigma\circ\chi_{ 1} = \chi_2$, hence
\begin{equation}\label{eq:egaliteJ}
\chi_2\inv(J_2) = (\sigma\circ\chi_{1})\inv(J_2) = \chi_{1}\inv(\sigma\inv(J_2)) =  \chi_{1}\inv(J_1). 
\end{equation}
Since $\chi_2(\lambda_1^a \zeta)   = f_2^b\circ \chi_2(\zeta)$, 
   $\chi_2\inv(J_2)$ is invariant under multiplication by $\lambda_1^a$. The same holds evidently 
   for $\chi_{1}\inv(J_1)$, so~\eqref{eq:egaliteJ} propagates from a neighborhood of $0$ to the whole complex plane.

Now assume that  we are in Case~(1') so that 
$\sigma^\varstar \mu_2 \asymp_c \mu_1$. 
From the relation $f^n_2\circ \sigma = \sigma \circ g^n_1$, for all $n\in\nn$ we obtain
\[
d_2^{bn} \mu_1 \asymp_{c^2} (g^{bn}_1)^\varstar \mu_1,
\]
 so that
  \[
   d_1^{an} \mu_1
   =
   (f_1^{an})^\varstar \mu_1 
   =
  (g_1^{bn})^\varstar \mu_1
  \asymp_{c^2}
  d_2^{bn} \mu_1 \] 
  which implies that 
  $d_1^a= d_2^b$.
  
Locally near the origin we have that 
 \begin{equation}\label{eq:egalitemu}
 \chi_2^\varstar\mu_2  = \chi_{1}^\varstar (\sigma^\varstar\mu_2) \asymp_c \chi_{1}^\varstar \mu_1. 
 \end{equation}
It remains to explain why the  relation~\eqref{eq:egalitemu}  propagates to $\cc$. 
 Write $M_\kappa(\zeta) := \kappa \zeta$, and define positive measures on $\cc$ by 
   $\widetilde \mu_1:=\chi_{1}^\varstar \mu_1$ and  $\widetilde \mu_2:=\chi_2^\varstar\mu_2$.
   We have
\[
M_{\lambda_1^a }^\varstar \widetilde \mu_1
=
(\chi_{1} \circ M_{\lambda_1^a })^\varstar
\mu_1
=
\chi_{1}^\varstar (f_1^\varstar
\mu_1)
=
d_1^a   \widetilde \mu_1, \] and likewise, since $\chi_2\circ M_{\lambda_1^a} = f_2^b\circ \chi_2$ we get 
$$ M_{\lambda_1^a }^\varstar \widetilde \mu_2 
= M_{\lambda_1^a }^\varstar \chi_2^\varstar \mu_2
= \chi_2^\varstar  (f_2^b)^\varstar \mu_2  = d_2^b \widetilde \mu_2 = d_1^a \widetilde \mu_2 .$$
 Therefore $\widetilde \mu_1$ and $\widetilde\mu_2$ are positive measures on $\cc$ satisfying the same 
 relation $M_\Lambda^\varstar \widetilde \mu_i   = D \widetilde \mu_i$ (for $\Lambda = \lambda_1^a$ and $D = d_1^a$) 
 and such that 
 $\widetilde \mu_1\asymp_c\widetilde\mu_2$ in some small 
 disk $D(0,r)$ . Since $M_\Lambda$ is  invertible on $\cc$, it follows that  
 $\widetilde \mu_1\asymp_c\widetilde\mu_2$ globally. Indeed, let $A$ be any Borel set and let $n$ be 
 so large   that $M_\Lambda^{-n}(A)\subset D(0,r)$. Then 
 $$\widetilde \mu_2(A) = D^n \widetilde \mu_2\lrpar{M_\Lambda^{-n}(A)} \leq c D^n \widetilde \mu_1\lrpar{M_\Lambda^{-n}(A)} = c\widetilde \mu_1(A)$$ and similarly for the reverse inequality, so we are done. 
 
 In Case~(2), repeating the same argument with $\sigma^\varstar \mu_2 \propto \mu_1$  
 we arrive 
 at $\widetilde \mu_1\propto\widetilde\mu_2$, and the 
 proof is complete. 
    \end{proof}

\begin{rmk}
This argument is reminiscent from 
 the work of Ghioca, Nguyen and Ye~\cite{GNY}. 
\end{rmk}

\subsection{Conclusion of the proof of Theorem~\ref{thm:globalizable}}
Recall that $\sigma\colon U\to \pp^1$ is a non-constant holomorphic map
such that $\sigma^\varstar \mu_{f_2} \asymp \mu_{f_1}$ (resp. 
$\sigma^\varstar \mu_{f_2} \propto \mu_{f_1}$ when $J_1$ is smooth), and $\sigma(p_1) = p_2$
where $p_1$ is a repelling periodic point for $f_1$ and $p_2$ a preperiodic point for $f_2$. 
Replacing $f_1$ by a suitable iterate, and $\sigma$ by $f_2^k \circ \sigma$ for a suitable $k$
we may suppose that $f_1(p_1)=p_1$ and $f_2(p_2) =p_2$.
Also we write $\ell = \deg_{p_1}(\sigma)$.

 By Proposition~\ref{prop:local_isomorphism_periodic}, the point $p_2$ is repelling and 
 there exist a generalized Poincaré-Koenigs maps $\chi_1$, $\chi_2$   
 associated to  $(f_1,p_1)$ and  $(f_2,p_2)$ such that 
$\chi_{1}^\varstar \mu_1 \asymp \chi_2^\varstar\mu_2$. 
We also have the relations $d_1^a = d_2^b$  for some $a,b\in \nn^*$, and 
 \begin{equation}\label{eq:functional}
 \chi_{1}(\lambda_1^{a} \zeta) = f_1^{a}(\chi_{1}(\zeta)) \text{ and }\chi_2(\lambda_1^{a} \zeta) = 
 \sigma \circ \chi_1(\lambda_1^{a} \zeta) 
 = \sigma \circ f_1^a \circ \chi_1(\zeta)
= f_2^{b}(\chi_{1}(\zeta)).
\end{equation}
Let $\Psi  =   (\chi_1, \chi_2): \cc\to \pu\times \pu$ and $F = (f_1^a, f_2^b)$. 
By Proposition~\ref{prop:algebraization}, 
$\overline{\Psi(\cc)}$ is an irreducible algebraic curve $Z$ which is neither a horizontal nor a vertical line, 
and from~\eqref{eq:functional} we deduce  that 
\[F (\Psi(\zeta)) = \Psi ( \lambda_1^{a} \zeta)\]
so $Z$ is $F$-invariant.

Now note that the first (resp. second) projection $\pi_1\colon \pu \times \pu \to \pu$ (resp. $\pi_2$)
semi-conjugates $F|_Z$ to $f_1$ (resp. to $f_2$). This implies that the measure of maximal entropy
$\mu|_F$ is equal to 
\[
\mu_{F|_Z}=\frac1{\deg(\pi_1)} \pi_1^\varstar \mu_{f_1}=\frac1{\deg(\pi_2)} \pi_2^\varstar \mu_{f_1}
\]
which implies $\sigma^\varstar \mu_{f_2} \propto \mu_{f_1}$ (see e.g., \cite{GNY} for details).
This concludes the proof. \qed
 
 \begin{rmk}  \label{rmk:integrable}  
  {Theorem~\ref{thm:globalizable} fails in the integrable case. }
 
 Indeed, let $f_1$ and $f_2$ be arbitrary Lattès maps, associated to finite branched covers 
 $\pi_i: \cc/\Lambda_i \to \pu$, for some lattices $\Lambda_i$   $i=1, 2$. Let $p_i$ be any point in $\pu$ located 
 outside the critical value locus of $\pi_i$.   Then there is a local measure preserving isomorphism 
 $\sigma$ mapping $p_1$ to $p_2$. Indeed let  $q_i$ be a lift of $p_i$ in $\cc/\Lambda_i$,   
 $\tilde q_i$ be a lift of $q_i$ in $\cc$, and $\tilde \pi_i\colon (\cc,\tilde q_i) \to  (\pu, p_i)$ the natural germ of biholomorphism. Then 
  $(\tilde \pi_i\inv)_\varstar \mu_i$  is proportional to the   
Haar measure of the torus. Therefore,  putting  $\sigma = \tilde \pi_2 \circ\tau \circ (\tilde \pi_1)\inv$, where $\tau$ is the translation mapping $\tilde q_1$ to $\tilde q_2$, we get $\sigma^\varstar\mu_2\propto\mu_1$. 

On the other hand, there is an algebraic correspondence between $f_1$ and $f_2$ when and only
when 
there is an isogeny between the corresponding elliptic curves $\cc/\Lambda_1$ and $\cc/\Lambda_2$, and
$\la_1^a=\la_2^b$ for some $a,b \in \nn^*$ (where $\la_i$ is the derivative of any lift of $f_i$ to $\cc$).

An analogous discussion
can be made in the monomial case. \qed
  \end{rmk}

\section{Local contractions and     preperiodic points}  \label{sec:TCE}

It is  natural to expect that pre-repelling points in the Julia set are   
 geometrically characterized by the existence of a contracting local symmetry. 
We confirm this intuition when $f$ satisfies suitable expansion properties on its Julia set, namely 
when $f$   satisfies the
topological Collet-Eckmann (TCE)
condition. This condition can be defined in a number of equivalent ways, for instance by the following exponential shrinking property: there exists $\lambda>1$ and $r>0$ such that for every $x\in J_f$ and $n\in \nn$,   every connected component $W$ of $f^{-n}(B(x,r))$ satisfies $\diam(W)\leq \lambda^{-n}$. 
  We refer to~\cite{PRS} for a thorough discussion of this notion.

 \begin{thm}\label{thm:TCE}
 Let $f$ be a  non-integrable rational map satisfying the Topological Collet-Eckmann  condition. 
Let  $\sigma \colon U \to \pu$ be a non-constant 
  holomorphic map  satisfying   $\sigma^{-1}(J_f) \cap U = J_f \cap U$,  
  and  furthermore  $\sigma^\varstar \mu_f \propto \mu_f$  if $J_f$ is smooth. 
  
Suppose that 
there exists $p\in J_f \cap U$ such that  $\sigma (p) = p$ and $\abs{\sigma'(p)}<1$. 
Then $p$ is  preperiodic to a repelling point. 
 \end{thm}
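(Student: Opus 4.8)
The plan is to argue by contradiction. First, it suffices to prove that $p$ is preperiodic under $f$: since $p\in J_f$ its whole forward orbit lies in $J_f$, and a TCE rational map has no non-repelling cycle inside $J_f$ (parabolic, Cremer or Siegel cycles in $J_f$ all violate the exponential shrinking of pull-backs), so $p$ is then automatically preperiodic to a \emph{repelling} point. Assume then that $p$ is not preperiodic, i.e.\ that its forward orbit $p,f(p),f^2(p),\dots$ consists of pairwise distinct points.

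The main idea is to transport $\sigma$ along the forward orbit of $p$, so as to produce an infinite normal family of local symmetries of $J_f$ on a \emph{fixed} open set, with non-constant limit functions, and then to contradict Levin's theorem (Theorem~\ref{thm:levin}) together with the non-integrability of $f$. Suppose first that the orbit of $p$ never meets $\crit(f)$. Then $f^n$ is a local biholomorphism at $p$, so there is a well-defined inverse branch $g_n$ of $f^n$ with $g_n(f^n(p))=p$; set $\sigma_n:=f^n\circ\sigma\circ g_n$, a germ fixing $f^n(p)$. A direct computation from $\sigma^{-1}(J_f)\cap U=J_f\cap U$ and the total invariance $f^{-1}(J_f)=J_f$ shows that $\sigma_n$ is again a local symmetry of $J_f$, with the \emph{same} multiplier $\sigma_n'(f^n(p))=\sigma'(p)$; in particular $|\sigma_n'(f^n(p))|<1$ and $\sigma_n$ is a contraction. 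Here is where TCE enters: by the exponential shrinking of pull-backs, together with the standard bounded-distortion estimates it yields on slightly smaller balls, one shows that the $g_n$ are defined on balls $B(f^n(p),r)$ of a \emph{uniform} radius $r>0$ and controlled well enough that, after shrinking, each $\sigma_n$ is univalent on a fixed-size ball around $f^n(p)$, maps it into a small ball around $f^n(p)$ (by contraction), and has there a derivative bounded below in modulus (Koebe). Since the forward orbit of $p$ is an infinite subset of the compact set $J_f$, some subsequence $f^{n_j}(p)\to y\in J_f$; restricting all the $\sigma_{n_j}$ to a fixed ball $B(y,r/2)$ produces a family of local symmetries of $J_f$ on a fixed open set, which is infinite (if one germ coincided with $\sigma_{n_j}$ for infinitely many $j$ it would fix the infinitely many points $f^{n_j}(p)$ clustering in $B(y,r/2)$, forcing it to be the identity, which a non-trivial local contraction is not), uniformly bounded — hence normal — and with non-constant limiting maps (uniform lower bound on $|\sigma_{n_j}'|$). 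By Theorem~\ref{thm:levin} this forces $f$ to be integrable, a contradiction. When $J_f$ is smooth, one additionally carries the proportionality $\sigma_n^\varstar\mu_f\propto\mu_f$ through this construction, exactly as in the smooth-case bookkeeping of Proposition~\ref{prop:local_isomorphism_periodic}, using $f^\varstar\mu_f=(\deg f)\,\mu_f$ and the fact that inverse branches of $f$ pull $\mu_f$ back to a proportional measure.

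It remains to treat the case where the forward orbit of $p$ meets $\crit(f)$. If it does so infinitely often, then since $\crit(f)$ is finite some critical point is hit at two distinct times, so $p$ is already preperiodic and we are done. Otherwise there is a last critical passage, and the strategy is to push $\sigma$ \emph{through} each critical point it encounters: if the transported symmetry $\sigma_c$ fixes a critical point $c\in J_f$, then near $c$ the Julia set reflects the local branched-covering structure of $f$ at $c$ (in a suitable coordinate $J_f$ is invariant under $z\mapsto e^{2\pi i/k}z$, $k=\deg_c f$), and using that TCE prevents $J_f$ from being locally smooth or otherwise degenerate one shows that $\sigma_c$ is compatible with this structure, hence descends through $f$ to a local symmetry $\sigma_{f(c)}$ fixing $f(c)$, with multiplier $\sigma'(p)^{k}$ still of modulus $<1$. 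Carrying out this descent at each of the finitely many critical passages (and inducting on, say, the time of the last one) reduces the general case to the critical-free one treated above. I expect this descent through critical points, and the precise extraction from the TCE condition of the distortion control needed in the critical-free case, to be the delicate technical heart of the proof.
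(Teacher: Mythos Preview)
Your overall strategy — produce an infinite normal family of local symmetries and invoke Levin's theorem — is the same as the paper's, and your opening reduction (it suffices that $p$ be preperiodic, since TCE forces all cycles in $J_f$ to be repelling) is exactly right. But the implementation has a genuine gap.

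The problem is the sentence ``by the exponential shrinking of pull-backs, together with the standard bounded-distortion estimates it yields on slightly smaller balls, one shows that the $g_n$ are defined on balls $B(f^n(p),r)$ of a uniform radius $r>0$.'' This is false in general. The TCE condition (in the form used here, from \cite{PRS}) only guarantees that, along a subsequence $(n_j)$ of positive lower density, the connected component $W_{n_j}$ of $f^{-n_j}(B(f^{n_j}(p),r))$ containing $p$ satisfies $\deg(f^{n_j}\colon W_{n_j}\to B(f^{n_j}(p),r))\le\delta$ for some fixed $\delta$. It does \emph{not} give degree~$1$, i.e.\ it does not give univalent inverse branches on uniform balls. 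Assuming the forward orbit of $p$ avoids $\crit(f)$ does not help: $f^n$ is then a local biholomorphism at $p$, but the pullback component $W_n$ may still contain a critical point of $f^n$ (think of a Collet--Eckmann quadratic with the critical point in $J_f$), so no single-valued $g_n$ exists on $B(f^n(p),r)$. Your maps $\sigma_n=f^n\circ\sigma\circ g_n$ are therefore not defined as claimed, and the whole transport-along-the-orbit construction collapses. For the same reason, your case distinction on whether the orbit of $p$ meets $\crit(f)$, and the hand-wavy ``descent through critical points,'' are beside the point.

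The paper's fix is to reverse the direction of the construction so that no inverse branches are needed. One works on the \emph{fixed} ball $B(p,r)$: for each $n_j$, choose $k=k(n_j)$ so that the contraction $\sigma^{k}$ maps $B(p,r)$ into the small pullback component $W'_{n_j}\ni p$, and then apply the forward map $f^{n_j}$, which is globally defined. The resulting symmetries $f^{n_j}\circ\sigma^{k(n_j)}\colon B(p,r)\to\pu$ land in $B(f^{n_j}(p),r/2)$ (normality) and have image of diameter bounded below (non-constant limits), using the bounded-degree distortion estimates of \cite{przytycki-rohde} in place of Koebe. Levin then gives $f^{n_{j_1}}\circ\sigma^{k_1}=f^{n_{j_2}}\circ\sigma^{k_2}$ for some $j_1\neq j_2$; evaluating at $p$ yields $f^{n_{j_1}}(p)=f^{n_{j_2}}(p)$. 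Note that this argument is insensitive to whether the orbit of $p$ meets $\crit(f)$, so no separate critical-point analysis is needed.
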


\begin{proof} 
Note  that it is enough to show that $p$ is preperiodic: indeed for a TCE map 
all periodic points on the Julia set are hyperbolic. 

To make the main idea more transparent, 
we sketch a proof under the stronger assumption that  $f$ is hyperbolic. 
Then there exists $r>0$ such that for every $n\geq 0$, there is a univalent inverse branch $f_{-n}$ 
of $f^n$ on $B(f^n(p), 2r)$ such that $f_{-n}(f^n(p)) = p$. Reducing $r$ if necessary we may assume that 
$B(p, r)\Subset U$, where $U$ is the domain of definition of $\sigma$. 
By the Koebe distortion theorem, 
$f_{-n}$ has uniformly bounded distortion on $B(f^n(p),  r)$. Therefore 
$f_{-n}(B(f^n(p),  r))$ is up to uniformly bounded distortion a disk centered at $p$ and of radius 
$r_n$, where $r_n$ decreases exponentially with $n$. Let now $k = k(n)$ be the least integer such that 
$C^2(\sigma'(0))^k r < r_n$, where $C$ bounds the distortion of $f_{-n}$ on $B(f^n(p),  r)$ and 
the distortion of $\sigma^k$ on $B(p, r)$. Then $f^n\circ \sigma^{k(n)}$  is a sequence of univalent   symmetries of 
$J$ defined on $B(p, r)$,   with derivative at $p$ bounded away from 0 and infinity.   Levin's theorem 
entails that this sequence is finite, and we conclude that there exists $n_1<n_2$ such that 
$f^{n_1}(p) = f^{n_2}(p)$, as desired. 

If $f$ only satisfies the TCE property the argument is similar, except that we can only map a small neighborhood 
of $p$ to the large scale with bounded degree and along a subsequence of integers. More specifically, the TCE condition of~\cite[p. 31]{PRS} reads as follows. There exists a radius $r>0$ and an integer $\delta$ such that if $W_n$ denotes the connected component  
of $f^{-n}(B(f^n(p), r))$ containing $p$, then there exists a sequence of integers $(n_j)$ of positive lower density such that 
\[\deg\left(f^{n_j}:W_{n_j}\to B(f^{n_j}(p), r)\right)\leq \delta.\]
We claim that if $r$ is small enough,  $W_n$ is simply connected for all $n$.  
Then by \cite[Lemma 2.1]{przytycki-rohde}, $f^{n_j}\rest{W_{n_j}}$ satisfies some
bounded distortion properties.

To prove our claim we make the
following   observations: first, the local structure of holomorphic maps shows that there exists 
$r_0 = r_0(f)$ such that for  $r\leq r_0$, for every $p$,  every component $W$ of $f\inv(B(p, r))$ is simply 
connected. Then $W$ is biholomorphic to a disk in $\cc$ so by the maximum principle, if $U \subset  B(p, r)$ 
is a simply connected open set, $f\inv (U)\cap W$ is simply connected.  Next, by the TCE property 
 there exists $r_1$ such that for every $r\leq r_1$, every $p\in \pu$ 
  and every $n\geq 0$ and every component $W_n$ of 
 $f^{-n}(B(p, r))$ has diameter smaller than $r_0$ (see the Backward Lyapunov Stability 
 condition in~\cite[\S 5]{PRS}). 
 Then the simple connectivity of $W_n$   easily follows by induction.

Reduce $r$ if necessary so that $\sigma$ is well defined and univalent on $B(p, 2r)$, 
and write $\la := \abs{\sigma'(0)}<1$.
By the Koebe distortion theorem, there exist constants $C_1$ and $C_2$ such that 
\[
B\left(p, C_1 \la^k r\right) 
\subset 
\sigma^k(B(p,r))
\subset 
B\left(p, C_2\la^k r\right) .
\]

For any $0<\tau<1$, denote by $W_n(\tau)$ the connected component  of 
$f^{-n}(B(f^n(p), \tau r))$  containing $p$. To simplify notation we write  $W'_n= W_n(1/2)$.

Pick $\alpha <1$, and let 
$k = k(n_j)$ 
be the least integer such that 
$\la^k r \le \alpha  \diam (W'_{n_j})$. 
Then $\alpha \diam (W'_{n_j}) \le \la^{k-1} r$, and we get
\[
B\left(p, \alpha \la C_1\diam (W'_{n_j}) \right) 
\subset 
\sigma^k(B(p,r))
\subset 
B\left(p,  \alpha C_2 \diam (W'_{n_j}) \right)
\]
Now, by \cite[Lemma 2.1 (2.3)]{przytycki-rohde}, we have
$B\left(p,  \alpha C_2 \diam (W'_{n}) \right) \subset W'_{n}$ when   $\alpha$   small enough, 
independently on $n$. 
Furthermore  by \cite[Lemma 2.1 (2.2)]{przytycki-rohde}, when $\tau$ is small enough, then for every $n$, 
$W'_{n}(\tau)\subset B\left(p,  \alpha\la C_1 \diam (W'_{n}) \right)$.
 
 It follows that 
  $f^{n_j}\circ \sigma^{k(n_j)}$  is a sequence of  symmetries of  $J$ defined on $B(p, r)$ which satisfies: 
  \begin{align*}
  f^{n_j}\circ \sigma^{k(n_j)}(B(p, r))
  \subset f^{n_j} (
   W'_{n}) \subset B(f^n(p), r/2), \text{ and }
    \\
  \diam \left(f^{n_j}\circ \sigma^{k(n_j)}(B(p, r))\right)
  \ge
  \diam \left(f^{n_j}(W_{n_j}(\tau))\right)
  \ge \tau r~.
  \end{align*}
The first estimate implies that $f^{n_j}\circ \sigma^{k(n_j)}$
 forms a normal family on $B(p,r)$ and the second that no cluster value of this sequence 
    is constant. At this stage we conclude as in the hyperbolic case:   the sequence $f^{n_j}\circ \sigma^{k(n_j)}$ must be finite, and 
  we find integers $n_{j_1}$ and $n_{j_2}$ such that $f^{n_{j_1}}(p) = f^{n_{j_2}}(p)$. 
\end{proof}

As a consequence we infer that the assumption that $\sigma$ maps a repelling point to a preperiodic point is 
superfluous in Theorem~\ref{thm:globalizable} when $f_2$ satisfies the TCE property. 

\begin{cor}\label{cor:automatic_S3}
Let $f_1$ and $f_2$ be two rational maps and assume $f_2$ is non-integrable and satisfies the TCE property.
Let $\sigma \colon U \to \pu$ be any non-constant 
  holomorphic map satisfying 
  $\sigma^{-1}(J_{2}) \cap U = J_{1} \cap U$ if $J_{f_1}$ is not smooth, 
  and   $\sigma^\varstar \mu_{2} \propto \mu_{1}$ otherwise.
Then $\sigma$ maps any repelling periodic point of 
  $f_1$  to a  pre-repelling point  of $f_2$.
\end{cor}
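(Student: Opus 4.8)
The plan is to reduce Corollary~\ref{cor:automatic_S3} to Theorem~\ref{thm:TCE} by producing, out of the local map $\sigma$ and a repelling periodic point $p_1$ of $f_1$, a \emph{contracting} local symmetry of $J_{f_2}$ fixing $\sigma(p_1)$. Let me sketch the steps.

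First I would normalize: replacing $f_1$ by an iterate $f_1^{m_1}$ (which has the same Julia set and maximal entropy measure), we may assume $p_1$ is a repelling \emph{fixed} point of $f_1$, with multiplier $\lambda_1$, $\abs{\lambda_1}>1$. Set $p_2 := \sigma(p_1) \in \pu$; since $\sigma^{-1}(J_2)\cap U = J_1\cap U$ (or $\sigma^\varstar\mu_2\propto\mu_1$ in the smooth case), $p_2\in J_{f_2}$. Let $\ell = \deg_{p_1}(\sigma)\ge 1$. Now consider the sequence of local maps $\rho_n := \sigma \circ f_1^{-n}_{(p_1)}$, where $f_1^{-n}_{(p_1)}$ denotes the inverse branch of $f_1^n$ near $p_1$ fixing $p_1$ (well defined on a fixed neighborhood since $p_1$ is repelling). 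Each $\rho_n$ is a holomorphic map from a fixed neighborhood $V$ of $p_1$ into $\pu$, sending $p_1$ to $p_2$, and it satisfies $\rho_n^{-1}(J_2)\cap V = J_1 \cap V$ because $f_1^{-n}_{(p_1)}$ preserves $J_1$ locally (again replacing the $\propto$-condition in the smooth case, which is preserved since $f_1^\varstar\mu_1 = d_1\mu_1$). Moreover $\deg_{p_1}(\rho_n) = \ell$ and the leading coefficient scales like $\lambda_1^{-\ell n}$, so $\rho_n$ contracts arbitrarily strongly near $p_1$.

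The key point is then to build a genuine local symmetry of $J_{f_2}$ \emph{based at $p_2$}, i.e.\ a map defined near $p_2$, not near $p_1$. For this I would fix a small disk $D'\subset V$ intersecting $J_1$ on which $\sigma$ (or, if $\ell>1$, a suitable branch of an $\ell$-th root composed with $\sigma$) is univalent, and set $\tau_n := \rho_n \circ (\sigma\rest{D'})^{-1}$, a holomorphic map from $\sigma(D')$ (a neighborhood of $p_2$ meeting $J_2$) into $\pu$ fixing $p_2$, with $\tau_n^{-1}(J_2) \cap \sigma(D') = J_2\cap\sigma(D')$ (and $\tau_n^\varstar\mu_2\propto\mu_2$ in the smooth case, arguing as in~\eqref{eq:sigma_n_mu2}). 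Since $\sigma\rest{D'}$ is a fixed biholomorphism while $\rho_n$ contracts by $\lambda_1^{-\ell n}\to 0$, for a suitable choice of index $n = n(m)$ the composition $\tau_{n(m)}$ has $\abs{\tau_{n(m)}'(p_2)}<1$. Thus $\tau := \tau_{n(m)}$ is a contracting local symmetry of $J_{f_2}$ fixing $p_2$ (when $\ell = 1$ this is immediate; when $\ell>1$ one first composes with a local $\ell$-th root to linearize, as in the proof of Proposition~\ref{prop:local_isomorphism_periodic}, and the same contraction estimate applies). Applying Theorem~\ref{thm:TCE} to $f_2$ and $\tau$ shows $p_2$ is preperiodic to a repelling point of $f_2$, i.e.\ pre-repelling, which is exactly the conclusion. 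Finally, undoing the normalization $f_1\leadsto f_1^{m_1}$ gives the statement for an arbitrary repelling periodic point of $f_1$.

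The main obstacle I expect is the bookkeeping when $\ell = \deg_{p_1}(\sigma)>1$: the map $\tau_n$ then has a critical point at $p_2$, so it is not literally a contracting map with $\abs{\tau_n'(p_2)}<1$ in the sense required by Theorem~\ref{thm:TCE}. One must pass through a local $\ell$-th root coordinate — choosing holomorphic coordinates in which $\sigma(z)=z^\ell$ near $p_1$ and then splitting $f_1^{-n}_{(p_1)}$ accordingly, exactly the device used in the proof of Proposition~\ref{prop:local_isomorphism_periodic} — to replace $\sigma$ by a genuine local isomorphism and thereby obtain an honest contracting symmetry. A secondary point to check carefully is that the choice of inverse branch index $n(m)$ can be made so that the derivatives $\abs{\tau_{n(m)}'(p_2)}$ are bounded \emph{away from} $0$ as well (as in the hyperbolic-case argument inside the proof of Theorem~\ref{thm:TCE}); but in fact Theorem~\ref{thm:TCE} as stated only requires a single contracting symmetry, so it suffices to produce one $\tau$ with $\abs{\tau'(p_2)}<1$, which removes this difficulty entirely.
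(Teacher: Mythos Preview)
Your approach is the same as the paper's: build a contracting local symmetry of $J_{f_2}$ at $p_2=\sigma(p_1)$ and apply Theorem~\ref{thm:TCE}. The paper does it in one line by restricting to $p_1$ outside the critical set of $\sigma$ and writing down $\sigma\circ f_1^{-k}\circ\sigma^{-1}$ (with $k$ the period of $p_1$); this is exactly your $\tau_k$, so the sequence $(\tau_n)$ and the selection of an index $n(m)$ are unnecessary --- any single $n\ge 1$ already gives $|\tau_n'(p_2)|=|\lambda_1|^{-n}<1$.

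Your worry about $\ell=\deg_{p_1}(\sigma)>1$ is legitimate but your proposed fix does not work: when $\sigma$ is $\ell$-to-$1$ at $p_1$, the composition $\sigma\circ f_1^{-n}\circ\sigma^{-1}$ is genuinely multivalued near $p_2$ (the local inverse branch $f_1^{-n}$ has no reason to commute with multiplication by $e^{2\pi i/\ell}$), and the $\ell$-th-root device from Proposition~\ref{prop:local_isomorphism_periodic} produces a germ near $p_1$, not near $p_2$, so it does not directly yield the map Theorem~\ref{thm:TCE} requires. The paper simply sidesteps the issue by taking $p_1\notin\crit(\sigma)$; since the critical set of $\sigma$ is finite, repelling periodic points are dense in $J_1$, and the applications (Theorems~\ref{thm:globalizable} and~\ref{thm:polynomial_TCE}) only require \emph{one} repelling periodic point with preperiodic image, this restriction is harmless.
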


\begin{proof}
Note that $J_2$ is smooth if and only if $J_1$ is smooth. 
Fix a repelling periodic $p_1$ of $f_1$ of period $k$ outside the critical set of $\sigma$. 
Then   $\sigma\circ f_1^{-k} \circ \sigma\inv$ defines a local holomorphic 
contraction  of $J_2$ at $\sigma(p_1)$, 
which furthermore preserves $\mu_2$ up to a constant  if $J_2$ is smooth, 
 thus the previous proposition gives the result. 
\end{proof}

\begin{rmk} \label{rmk:local_TCE}
The TCE property is detected by the maximal entropy measure: indeed it is equivalent to the property that the measure of small balls satisfies an estimate of the form $\mu(B(x,r))\gtrsim r^\theta$ for some $\theta>0$ and for every $x\in J$ (see~\cite{rivera-letelier}). It is not difficult  to see
that if such an estimate holds for every $x\in U$, where $U$ is any open set  interesting $J_f$, then it holds everywhere (possibly with a different $\theta$). It follows that
 under the assumptions of Theorem~\ref{thm:globalizable}, \emph{$f_1$ is TCE if and only if $f_2$ is TCE}. 
\end{rmk}

\begin{rmk}\label{rmk:TCE} 
Pick any local 
symmetry $\sigma$ of $J_f$. By precomposing with some inverse branch of $f$, we may always
assume that it satisfies $\sigma(U)\Subset U$, so it has an attracting fixed point. 
The proof of Theorem~\ref{thm:TCE}
then gives  the existence of 
integers $n_1<n_2$ and  $k_1<k_2$ such that $  f^{n_1} \circ \sigma^{k_1} = f^{n_2}\circ \sigma^{k_2}$, 
so $\sigma^{k_2-k_1}  = f^{-n_2}\circ f^{n_1}$, and we infer that 
an iterate of $\sigma$ is the restriction of an algebraic correspondence. 

Unfortunately,  the algebraicity of $\sigma$ itself does not seem to follow from this relation, and the only route we know of to algebraicity 
goes through measure class preservation and Theorem~\ref{thm:globalizable}. Still, this
gives additional credit to the  problem stated in the Introduction.
\end{rmk}

\section{The polynomial case} \label{sec:polynomials}

 For  polynomials the maximal entropy measure is determined by the Julia set: 
indeed it coincides with  the harmonic measure of $K_f$ viewed from infinity. 
Thus  in this case it is natural to expect that the  measure class preservation 
hypothesis in Theorem~\ref{thm:globalizable} should follow from weaker property 
$\sigma\inv(J_{2})\cap U = J_1\cap U$. However, since we are working locally, 
some technicalities arise and we are 
only able to confirm this expectation under a mild  additional assumption.

\begin{thm}\label{thm:harmonic_measure}
Let  $f_1$ and $f_2$ be non-integrable polynomials such that:
\begin{enumerate}
\item either $J_1$ and $J_2$ are disconnected;
\item or $J_1$ and $J_2$ are connected and  locally connected.  
\end{enumerate}
Let $U\subset \cc$ be an open subset intersecting $J_1$ and 
 $\sigma:U\to\cc$ be a non-constant holomorphic map such that 
$\sigma\inv(J_{2})\cap U = J_1\cap U$. 
Then there exists $\Omega\subset U$ intersecting $J_1$ on which $\sigma^\varstar \mu_2\asymp \mu_1$. 
\end{thm}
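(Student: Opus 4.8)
The plan is to reduce the statement to a comparison between local and global harmonic measure. Recall that for a polynomial $f$ with Green's function $G_f$ (the escape rate function, harmonic on the basin of infinity, zero on $K_f$), one has $\mu_f = \Delta G_f$ (up to normalization), so $\mu_f$ is the harmonic measure of $K_f$ seen from $\infty$, with a continuous global potential. The key point is that on a small ball $\Omega$ meeting $J_1$, the measure $\mu_1\rest\Omega$ is comparable to the harmonic measure $\omega_1$ of $K_1$ relative to \emph{any} fixed bounded domain $V_1 \supset \supp(\mu_1\rest\Omega)$ with smooth boundary staying away from $K_1$ except along $\Omega$; indeed $G_{f_1}$ is a positive harmonic function on $V_1 \setminus K_1$ vanishing on $K_1 \cap V_1$, hence by Harnack and the boundary Harnack principle it is comparable, near $J_1 \cap \Omega$, to the harmonic measure density of such a domain. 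The same holds for $\mu_2$ near $\sigma(\Omega)$. Since $\sigma$ is a local biholomorphism onto its image near a generic point of $J_1 \cap U$ (after shrinking $U$ and discarding the critical set of $\sigma$, which is a proper analytic subset), and conformal maps transport harmonic measure of a domain to harmonic measure of the image domain with a comparable (indeed, after composing, bounded) Radon–Nikodym derivative, pushing $\omega_1$ forward by $\sigma$ and comparing with $\omega_2$ will yield $\sigma^\varstar\mu_2 \asymp \mu_1$ on $\Omega$.

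More concretely, I would proceed as follows. First, shrink $U$ to a ball $\Omega_0$ meeting $J_1$ on which $\sigma$ is univalent with nonvanishing derivative; this is possible since $\sigma$ is non-constant and $J_1$ has no isolated points, so $J_1 \cap U$ meets the complement of $\crit(\sigma)$. Set $\Omega_1' = \sigma(\Omega_0)$. Second, choose a smoothly bounded Jordan domain $V_1$ with $\Omega_0 \Subset V_1$ and $\overline{V_1} \cap K_1 \subset \overline{\Omega_0}$ — this is where hypotheses (1) and (2) enter: when $J_1$ is disconnected one can separate a piece of $K_1$ near $\Omega_0$ from the rest by an analytic curve, and when $J_1$ is connected and locally connected the local connectivity lets one cut out a neighborhood of $J_1 \cap \Omega_0$ in $K_1$ bounded by an arc of $J_1$ together with a smooth arc in the basin, so that a suitable $V_1$ exists. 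Third, on $V_1 \setminus K_1$ the Green's function $G_{f_1}$ is positive harmonic and vanishes continuously on $K_1 \cap V_1$; by the boundary Harnack inequality applied near $J_1 \cap \Omega_0$ (which is where $\mu_1$ is supported inside $\Omega_0$), $G_{f_1} \asymp G_{V_1 \setminus K_1}(\cdot, z_0)$ for a fixed interior pole $z_0$, whence taking Laplacians / normal derivatives along the boundary, $\mu_1 \asymp \omega_{V_1\setminus K_1}^{z_0}$ on $\Omega_0$ near $J_1$. Fourth, do the same for $f_2$ with a domain $V_2 \supset \Omega_1'$, and arrange (again shrinking) that $\sigma: \Omega_0 \to \Omega_1'$ extends to a conformal map of neighborhoods matching $V_1$ to $V_2$ near the relevant boundary arcs, or more simply just compare $\sigma^\varstar \omega_{V_2\setminus K_2}^{w_0}$ with $\omega_{V_1\setminus K_1}^{z_0}$ using that $\sigma$ is a conformal equivalence $V_1 \setminus K_1 \to$ (its image) $\subset V_2 \setminus K_2$ taking $K_1$ to $K_2$ locally, so harmonic measures of the two domains restricted to the common boundary piece are mutually absolutely continuous with bounded density by the conformal invariance of harmonic measure plus Harnack. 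Finally, chain the comparisons: $\sigma^\varstar\mu_2 \asymp \sigma^\varstar \omega_2 \asymp \omega_1 \asymp \mu_1$ on a ball $\Omega \subset \Omega_0$ meeting $J_1$.

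\textbf{Main obstacle.} The delicate point, and the reason for the two-case hypothesis, is the third step combined with the choice of the auxiliary domains $V_i$: one must separate, by a \emph{smooth} (or at least rectifiable, non-tangential) curve lying in the basin of infinity, a neighborhood of $J_1 \cap \Omega_0$ inside $K_1$ from the rest of $K_1$, so that the Green's function $G_{f_1}$ restricted to the resulting domain is comparable to that domain's own harmonic measure. For disconnected $J_1$ this is essentially free (different components of $K_1$ are a positive distance apart). For connected, locally connected $J_1$ one invokes local connectivity to obtain access to $J_1 \cap \Omega_0$ by curves from infinity and to build a Jordan subdomain; the subtlety is the interplay between the \emph{local} geometry near $\Omega_0$ and the \emph{global} harmonic measure $\mu_1$ from $\infty$ — in particular one must ensure the boundary Harnack principle applies uniformly, which is why $J_1$ should not be too wild. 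Handling quasicircles and the precise boundary Harnack estimate on such domains is exactly the "non-trivial potential-theoretic arguments" the introduction alludes to; I expect that to be the technical heart of the section, while the conformal-transport step for $\sigma$ is comparatively routine.
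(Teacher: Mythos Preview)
Your overall strategy—localize the harmonic measure of $K_i$ to a small domain, then transport by conformal invariance of harmonic measure—is exactly the paper's strategy, and in the disconnected case your sketch is essentially equivalent to the paper's Lemma~\ref{lem:potential_disconnected} (the paper phrases it probabilistically via Brownian excursions, you phrase it via boundary Harnack; both are fine once one has arranged $\partial V_1\cap K_1=\emptyset$, which the density of point components permits).

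There is, however, a genuine gap in your treatment of the connected case. In your fourth step you write that ``$\sigma$ is a conformal equivalence $V_1\setminus K_1\to$ (its image) $\subset V_2\setminus K_2$''. This inclusion is \emph{unjustified}: nothing you have said rules out that $\sigma$ maps the local piece of the basin $K_1^\complement$ into $\Int(K_2)$ rather than into $K_2^\complement$. If that happens, $\sigma^\varstar\mu_2$ is being compared with the harmonic measure of $K_2$ seen from the \emph{inside}, which for a non-smooth Jordan curve is typically mutually singular with $\mu_2$ (the harmonic measure from $\infty$); the chain of comparisons then collapses. The paper handles this in two steps. When $J_1$ is not a Jordan curve, it produces (via the density of ``endpoints'', Lemma~\ref{lem:zdunik3}) a crosscut $C$ with $\overline C\cap K_1$ a single point; the maximum principle then forces $\sigma(C)\subset K_2^\complement$, so side-preservation is automatic. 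When $J_1$ \emph{is} a Jordan curve, no such topological trick is available—local biholomorphisms of a quasicircle need not respect sides—and the paper devotes all of Section~\ref{sec:jordan} (Proposition~\ref{prop:inside_out}) to proving that $\sigma$ cannot flip the sides, via a rigidity argument involving Blaschke products and Lyapunov exponents of periodic points. You have not identified this issue at all; your ``main obstacle'' paragraph worries about boundary-Harnack regularity, which is the wrong difficulty.

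A secondary point: even granting side-preservation, your construction of $V_1$ in the connected case (``cut out a neighborhood of $J_1\cap\Omega_0$ in $K_1$ bounded by an arc of $J_1$ together with a smooth arc in the basin'') is too vague to be an argument. The paper makes this precise by pulling everything back through the Riemann map $\phi_{K_1}\colon\dd\to K_1^\complement$ and comparing harmonic measure in a sector $S_{\theta,\delta}\subset\dd$ with harmonic measure in $\dd$ (Lemmas~\ref{lem:sector} and~\ref{lem:zdunik2}); this replaces your appeal to a boundary Harnack principle on a possibly wild domain by an elementary estimate on smooth domains inside $\dd$.
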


 \begin{rmk}\label{rmk:local_connectedness}  
 Under the assumptions of Theorem~\ref{thm:harmonic_measure}, \emph{$J_1$ is connected (resp. locally connected) iff 
 $J_2$ is connected (resp. locally connected)}, so we could state the assumption only for one of $J_1$ or $J_2$. 
 
 Indeed suppose $J_1$ is disconnected. Then  point components of $J_1$ accumulate the whole Julia set (see \S\ref{subs:disconnected} below), hence
 $J_2 \cap \sigma(U)$ admits  points components, and $J_2$ is disconnected. 
 
 When $J_1$ is locally connected, then it is clear that $J_2 \cap \sigma(U)$ is also locally connected. Since
 $f_2$ is open and  $f_2^n(\sigma(U))\supset J_2$ for large $n$, we conclude that $J_2$ is also connected.
\qed  \end{rmk}
 
We now have all the necessary ingredients for  Theorem~\ref{thm:polynomial_TCE}.

\begin{proof}[Proof of Theorem~\ref{thm:polynomial_TCE}] 
Restricting $U$ if necessary we may assume that $\sigma$ is a biholomorphism, 
and without loss of generality we may assume 
that $f_2$ is non-integrable and TCE. 

When $J_2$ is disconnected, then $J_1$ is disconnected too by the preceding remark, and 
Theorem~\ref{thm:harmonic_measure} implies
$\sigma^\varstar \mu_2\asymp \mu_1$ on some $\Omega\subset U$. 
By Remark~\ref{rmk:local_TCE}, $f_1$ is TCE, and non-integrable. 
Corollary~\ref{cor:automatic_S3} implies that $\sigma$ maps any repelling periodic point of 
$f_1$ to a preperiodic point of $f_2$. 
Thus the result follows from Theorem \ref{thm:globalizable}. 

When $J_2$ is connected, then it is not locally smooth since
$f_2$ is not integrable, and it is locally connected by the TCE property; see~\cite{mihalache}.
Applying the local biholomorphism $\sigma$, we infer that 
 $J_1$ is not smooth, hence $f_1$ is not integrable, and 
by Remark~\ref{rmk:local_connectedness}, $J_1$ is connected and locally connected.
We conclude as in the previous case.
  \end{proof}

 The remainder of this section will be devoted to the proof of Theorem~\ref{thm:harmonic_measure}. 
  It relies on a localization principle for harmonic measure,  which requires different arguments
   in the disconnected and  connected cases; the latter  is the most delicate, a more precise   outline is given at the beginning of \S\ref{subs:connected}.

Let us first introduce  a few standard potential theoretic tools 
(see  \cite{peres-morters}  for a gentle introduction to the probabilistic viewpoint  
on  potential theory,   \cite{doob} for a systematic account, and  \cite{garnett-marshall} for the planar case).
If $\Omega$ is a   domain on the Riemann sphere with non-polar complement,  for  $z\in \Omega$  the harmonic measure 
 $\omega(z, \cdot, \Omega)$ is the measure on $\fr \Omega$ defined by 
 declaring that
 $\int  \varphi(w) \omega(z , dw, \Omega)$ is the value at $z$ of the solution of 
 the Dirichlet problem with boundary values given by   $\varphi\in \mathcal {C}(\fr\Omega)$. It is also the exit distribution of the Brownian motion issued from $z$, that is, if we denote by $B_z$ the Brownian motion issued from $z$ on $\pu$, and 
 $\tau_z = \inf\set{t>0, B_z(t)\in \fr\Omega}$   the hitting time of the boundary (which is a.s. finite), then 
 for a (say closed) subset $E\subset \fr\Omega$, $\omega(z,E, \Omega) = \pp\lrpar{B_z(\tau_z)\in E} $. 
 If $\Omega$ is simply connected,  
 let $\phi: \dd\to\Omega$ be a uniformizing map such that $\phi(0)   = z$ 
 (which is unique up to pre-composition with a rotation). Then $\phi$   
extends radially outside a set of rays of zero capacity, and still denoting by $\phi$ this extension we have 
 $$\omega(z,E, \Omega)  = \omega(0, \phi\inv(E), \dd) = \unsur{2\pi}\abs{\phi\inv(E)}, $$
 where $\abs{\cdot}$ denotes usual arclength. 
 We will only use this fact when 
 $\fr\Omega$ is locally connected, in which case by the Caratheodory theorem 
 $\phi$ extends to a continuous surjection $  \overline\dd\to \overline \Omega$. 
 
If $f$ is a polynomial, the properties of the Green function $G_f$ imply that 
the maximal entropy measure $\mu_f$  
coincides with the harmonic measure of the basin of infinity: $\mu_f= \omega(\infty, \cdot, K_f^\complement)$ (where the complement here is understood in $\pp^1(\cc)$).
  
  \begin{rmk}\label{rmk:luo}  
Theorem~\ref{thm:polynomial_TCE} has some overlap with \cite[Theorem 1.4]{luo} 
(see also~\cite[Prop. 4.5]{mcmullen_motion}), which relies on 
completely different ideas (even if Levin's finiteness theorem also plays a key role there).
In his paper, Luo 
assumes that $f_1$ and $f_2$ are polynomials of the same degree with connected and locally connected Julia 
sets, and all these assumptions are essential. In this setting, his result is stronger than ours since no 
 no additional hyperbolicity assumption is required to guarantee that a periodic point 
 is mapped to a preperiodic point.   Note that by applying his methods, we can obtain the 
 following generalization of \cite[Theorem~1.1]{luo}: \emph{if $M_d$ denotes the degree $d$ Multibrot set, then for $d\neq d'$, an open subset of $\fr M_d$ cannot be biholomorphic to an open subset of $\fr M_{d'}$.}
\end{rmk}
  
 \subsection{Proof of Theorem~\ref{thm:harmonic_measure} in the disconnected case}\label{subs:disconnected}
 The localization principle that we use in the disconnected case is the following: 
  
  \begin{lem}\label{lem:potential_disconnected}
  Let $f$ be a polynomial of degree $\geq 2$. Let   $\Omega$ be a   connected  and simply  
  connected open set    with smooth boundary     such that $\Omega\cap J_f\neq \emptyset$ and 
  $\fr \Omega\cap K_f = \emptyset$. 
  Then for every $z\in \Omega\setminus K_f$, there exists a constant $c>0$ such that 
  $c\inv \mu_f \leq \omega(z,\cdot  , \Omega\setminus K_f) \leq c\mu_f$ on $J_f\cap \Omega$.
  \end{lem}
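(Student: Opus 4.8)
The statement is a Harnack-type comparison between two harmonic measures living on the same boundary piece $J_f \cap \Omega$: the "global" measure $\mu_f = \omega(\infty,\cdot,K_f^\complement)$ and the "local" exit measure $\omega(z,\cdot,\Omega\setminus K_f)$ of Brownian motion from $z\in\Omega\setminus K_f$. The natural strategy is to compare both measures to a third object by a two-sided estimate, using the probabilistic picture: $\omega(z,E,\Omega\setminus K_f)$ is the probability that Brownian motion started at $z$ exits $\Omega\setminus K_f$ through $E\subset J_f\cap\Omega$, whereas $\mu_f(E)$ is the probability that Brownian motion from $\infty$ hits $K_f$ for the first time inside $E$. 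The key geometric input is the hypothesis $\fr\Omega\cap K_f=\emptyset$, which means $\fr\Omega$ stays at positive distance from the compact set $K_f$; since $\Omega$ has smooth boundary, $\fr\Omega$ is a nice compact curve disjoint from $K_f$.

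\textbf{Key steps.} First I would set up the strong Markov property at the first time $\tau_{\fr\Omega}$ the Brownian motion from $\infty$ enters $\overline\Omega$ through $\fr\Omega$ (it must pass through $\fr\Omega$ before reaching $K_f\cap\Omega$, since $K_f\cap\Omega$ is compactly contained in $\Omega$). Conditioning on the landing point $w\in\fr\Omega$, one gets
\[
\mu_f(E)=\int_{\fr\Omega}\omega(w,E,\Omega\setminus K_f)\,\nu(dw)
\]
for $E\subset J_f\cap\Omega$, where $\nu$ is the distribution of the entrance point on $\fr\Omega$ (note: Brownian motion started outside $\Omega$ might wander in and out of $\Omega$, so one has to be a little careful — restart at each successive visit and use that the part of $\partial\Omega$ is hit with some positive probability, or alternatively decompose $\mu_f$ using only the contribution of paths that enter $\Omega$ and then hit $K_f$ without leaving $\Omega$; paths that leave $\Omega$ and come back only add more mass, giving one of the two inequalities for free). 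The second ingredient is a Harnack inequality: the family of positive harmonic functions $w\mapsto\omega(w,E,\Omega\setminus K_f)$ on the connected open set $\Omega\setminus K_f$, evaluated at the two compact subsets $\fr\Omega$ and $\{z\}$, are comparable by a constant $c$ depending only on $\Omega$, $K_f$, $z$ (and not on $E$): this is exactly the Harnack chain condition, available since $\Omega\setminus K_f$ is a fixed domain and $\fr\Omega\cup\{z\}$ is a fixed compact subset of it (for the Harnack inequality to apply uniformly in $E$ we use that the functions are harmonic and nonnegative on all of $\Omega\setminus K_f$, being harmonic extensions of boundary data supported away from $\fr\Omega$). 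Combining,
\[
c^{-1}\,\omega(z,E,\Omega\setminus K_f)\le\mu_f(E)\le c\,\omega(z,E,\Omega\setminus K_f),
\]
possibly after adjusting $c$ to absorb the total mass $\nu(\fr\Omega)\le 1$ (and bounding it below by the probability that Brownian motion from $\infty$ enters $\Omega$ at all and then hits $K_f\cap\Omega$, which is positive and independent of $E$). Since the comparison holds for all Borel $E\subset J_f\cap\Omega$, it gives $c^{-1}\mu_f\le\omega(z,\cdot,\Omega\setminus K_f)\le c\mu_f$ on $J_f\cap\Omega$, as claimed — noting that $\mu_f$ is supported on $J_f=\fr K_f$ so there is no contamination from $\partial\Omega$ or from the interior of $K_f$.

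\textbf{Main obstacle.} The delicate point is the lower bound $\mu_f(E)\gtrsim\omega(z,E,\Omega\setminus K_f)$: one must guarantee that a definite fraction of the Brownian mass coming from $\infty$ actually enters $\Omega$, reaches a neighborhood of $z$ (or of $\fr\Omega$) with uniformly positive probability, and does so in a way that the subsequent exit distribution through $E$ is comparable to that of the motion started at $z$ — uniformly in $E$. This is where the Harnack chain along a fixed path inside $\Omega\setminus K_f$ from $\fr\Omega$ to $z$ is essential, together with the observation that $\Omega\setminus K_f$ is connected (a consequence of $\Omega$ being connected and simply connected with $K_f\cap\Omega$ compactly contained, so that removing it does not disconnect $\Omega$ — here one may also invoke that in the disconnected case $J_f=\partial K_f$ has empty interior, so $K_f\cap\Omega$ is a compact set with connected complement in $\Omega$). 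Once connectedness and the Harnack chain are in hand, the estimate is uniform in $E$ and the proof closes; I expect the verification of these topological facts about $\Omega\setminus K_f$ and the careful bookkeeping of Brownian excursions in and out of $\Omega$ to be the only real work.
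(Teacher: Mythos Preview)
Your overall strategy (Harnack plus a strong-Markov decomposition of Brownian excursions) matches the paper's, but there is a real gap in the execution. You stop the Brownian motion at its first hit of $\partial\Omega$ and then invoke Harnack for the function $w\mapsto\omega(w,E,\Omega\setminus K_f)$ on the ``compact set'' $\partial\Omega$. The trouble is that $\partial\Omega$ is part of the \emph{boundary} of the domain $\Omega\setminus K_f$, not a compact subset of its interior: since $\partial\Omega$ is smooth, every point of it is a regular boundary point, and for $E\subset J_f\cap\Omega$ disjoint from $\partial\Omega$ the harmonic function $w\mapsto\omega(w,E,\Omega\setminus K_f)$ has boundary value $0$ there. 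Consequently your displayed identity reads $\mu_f(E)=\int_{\partial\Omega}0\,d\nu=0$, and the Harnack inequality is simply not available at those points.

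The repair is to introduce a slightly smaller smoothly bounded $\Omega'\Subset\Omega$ with $\Omega'\cap K_f=\Omega\cap K_f$ and to run the stopping-time bookkeeping on $\partial\Omega'$ instead of $\partial\Omega$. This is exactly what the paper does: it first replaces $\mu_f$ by $\tilde\mu=\omega(z,\cdot,K_f^\complement)$ via Harnack in $K_f^\complement$, and then decomposes the event that Brownian motion from $z$ eventually hits $J_f\cap\Omega$ according to the successive crossings of $\partial\Omega$ and $\partial\Omega'$. Since $\partial\Omega'$ is a genuine compact subset of the connected open set $\Omega\setminus K_f$, Harnack gives $\omega(w,\cdot,\Omega\setminus K_f)\asymp_c\omega(z,\cdot,\Omega\setminus K_f)$ on $J_f\cap\Omega$ uniformly for $w\in\partial\Omega'$, so each conditional piece of the decomposition is comparable to $\nu_z$. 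Your ``main obstacle'' paragraph correctly identifies the lower bound as the delicate direction; the missing ingredient is precisely this inner curve $\partial\Omega'$.
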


 This is most likely well-known, however we give a (probabilistic) proof because we have not been able to locate 
it in the literature.

\begin{proof}
Since $K_f$ is full, $\Omega\setminus K_f$ is connected. 
By the Harnack inequality, if $L\subset \Omega\setminus K_f$ then for any $(z,z')\in L^2$, there exists $c = c(L)$ such that 
\begin{equation*} 
c\inv \omega(z,\cdot  , \Omega\setminus K_f) \leq \omega(z',\cdot  , \Omega\setminus K_f) \leq 
c \, \omega(z,\cdot  , \Omega\setminus K_f). 
\end{equation*}
In particular if
the conclusion of the lemma  is true for some $z\in \Omega\setminus K_f$, then it is true for every $z\in \Omega\setminus K_f$. 
Reduce $\Omega$ a little bit to get  a smoothly bounded $\Omega'\Subset \Omega$ with the same properties as $\Omega$ and 
such that $\Omega\cap K_f = \Omega'\cap K_f$. Pick $z\in\Omega'$. 
Let $\tilde \mu = \omega(z, \cdot, \pu\setminus K_f)$. By the Harnack inequality again, $\tilde \mu$ is equivalent to $\mu_f$ with uniform bounds, so it is enough to compare 
$\tilde \mu$ and $ \omega(z,\cdot  , \Omega\setminus K_f)$. 

Denote by $\nu_z$ the restriction of $\omega(z,\cdot  , \Omega\setminus K_f)$ to $J_f\cap \Omega$.
The difference between 
$\tilde\mu\rest{\Omega\cap J_f}$ and  $\nu_z$ 
is accounted for  by the contributions of Brownian 
paths leaving $\Omega$ before reaching $\Omega\cap J_f$. 
If a Brownian path from $z$ eventually hits $\Omega\cap J_f$ without staying in $\Omega$, then it must cross $\fr\Omega'$. 
Let $B_z$ be the Brownian motion from $z$ in $K_f^\complement$ killed when hitting $J_f$, 
and $\tau$ be the hitting time of $\Omega\cap J_f$. Then $\tau<\infty$ if $B_z(t)$ exits  $K_f^\complement$ in  
$\Omega\cap J_f$, and the distribution of $B_z(\tau)$ conditioned to $\tau<\infty$ is 
the harmonic measure $\tilde \mu\rest{J_f\cap \Omega}$. 
 Introduce the following  sequence of stopping times: $T'_0=0$,  and 
by induction    $T_{i+1} = \inf\set{t>T'_i, \ B_z(t)\in \fr\Omega}$ and 
$T'_{i+1} =  \inf\set{t>T_{i+1}, \ B_z(t)\in \fr\Omega'}$, so that $T_1$ is the first hitting time of  $\fr\Omega$, $T'_1$  the first hitting time of  $\fr\Omega'$ after $T_1$, etc. 
  The event 
${\tau<T_1}$ holds if $B_z$ does not reach  $\fr\Omega$ before hitting $J_f$ so the distribution of 
$B_z(\tau)$ conditioned to $\set{\tau<T_1}$ is $\nu_z$. Now conditioned to the event
that $\set{T'_i< \tau < T_{i+1}}$, 
the distribution of $B_z(T'_i)$
   is a certain probability measure $p_i$  on $\fr\Omega'$. Hence  
by the strong Markov property of Brownian motion 
  \cite[\S 2.2]{peres-morters}
the conditional distribution of  $B_z(\tau)$ is   $\nu^{(i)} := 
    \int  \nu_w\,  dp_i(w)$, which satisfies  $c\inv \nu_z\leq \nu^{(i)} \leq c  \nu_z $ for a constant 
    $c$  depending only on $\fr\Omega'$ and $z$. Finally, decomposing the event $\set{\tau<\infty}$ as a disjoint union 
$\set{\tau<\infty} = \bigcup_{i\geq 0} \set{T'_i< \tau < T_{i+1}}$, we express $\tilde \mu\rest{J_f\cap \Omega}$ (normalized 
by $\pp({\tau<\infty})$) as an infinite 
convex combination of $ \nu_z $ and of the $ \nu^{(i)}$, and the lemma follows. 
\end{proof}

\begin{proof}[Conclusion of the proof of the theorem]
Assume   that the assumptions of Theorem~\ref{thm:harmonic_measure} hold, with  $J_1$   
disconnected.

Since point components are dense in $J_1$ 
(see DeMarco-McMullen~\cite{demarco-mcmullen} or  Emerson~\cite{emerson}), 
$J_1$ admits arbitrary small  relatively compact components in $U$.

Therefore we can fix a smoothly bounded simply connected 
open set $\Omega$ intersecting $J_1$, relatively compact in $U$  and  such that 
 $\fr\Omega\cap J_1   = \emptyset$. By choosing $\Omega$ small enough 
 we can further assume that $\sigma$ is a biholomorphism in a neighborhood of $\overline{\Omega}$. 
  Its image $\sigma(\Omega)$ under $\sigma$ satisfies the same properties relatively to $J_2$ and 
 $\sigma(U)$. By the holomorphic invariance of harmonic measure, for $z\in \Omega \setminus K_1$ we have  that 
 $\sigma_\varstar(\omega(z, \cdot, \Omega))  = \omega(\sigma(z), \cdot, \sigma(\Omega))$, 
 so the  property $\sigma^\varstar\mu_2\asymp \mu_1$   follows   from  
 Lemma~\ref{lem:potential_disconnected} applied to $\Omega\cap J_1$ and $\sigma(\Omega\cap J_2)$. 
 \end{proof}

 \subsection{Proof of Theorem~\ref{thm:harmonic_measure} in the  connected case}  \label{subs:connected}
 To establish Theorem~\ref{thm:harmonic_measure} for connected $J_f$ we face several difficulties. The first 
 one is that we need to take care of possible boundary effects in Lemma~\ref{lem:potential_disconnected}: indeed the argument of Lemma~\ref{lem:potential_disconnected} breaks down since 
 we cannot assume that $\Omega\cap J_f = \emptyset$. For this, we uniformize $K_f^\complement$ and use 
 some facts from Caratheodory theory (see \S\ref{subs:local_global_harmonic_measure} 
 as well as the notion of endpoint in \S\ref{subs:endpoints}). 
 The other new difficulty is that  given a small open set $\Omega$ intersecting $J_f$ and 
 $x\in \Omega\setminus J_f$, we need to detect whether $x$ belongs to $K_f$ or not,     by using  only the 
 data given by   $J_f\cap \Omega$. If $J_f$ is not a Jordan curve this can be done by looking at the 
  local topological properties of $J_f$ (the endpoints are also used  here). In the Jordan curve case we cannot distinguish the inside from the outside of $J_f$   from topology, nor even from complex analysis
  when the Julia set is a quasicircle, so a completely different argument needs to be found, which is postponed to the next section  (see Proposition~\ref{prop:inside_out}). 
Note that this study is necessary because  the harmonic measures viewed from the two 
sides of a non-smooth Jordan curve are typically mutually singular (see~\cite[Theorem~VI.6.3]{garnett-marshall}).

\subsubsection{From local to global harmonic measure}\label{subs:local_global_harmonic_measure}
We denote by $\arg(\cdot)$  the argument function    $\cc\setminus \re_-$, with values in $(-\pi, \pi)$. 

\begin{lem}\label{lem:sector}  
For $0<\theta<\pi$ and $0<\delta<1$, 
let $S_{\theta, \delta}\subset \dd$ be the sector defined by 
\[S_{\theta, \delta}  =\set{z ,  \ \abs{\arg(z)}< \theta, \  1-\delta < \abs{z} < 1}.\] 
There exists a constant $c = c(\theta, \delta )$ such that for $\zeta_0 = 1-\delta/2$, if 
  $E$ is   any measurable subset of $\fr\dd\cap \set{z,\  \abs{\arg(z)}\leq   \theta/2}$, 
\begin{equation}\label{eq:harmonic}
\omega(\zeta_0, E, S_{\theta, \delta}) \leq \omega(\zeta_0, E, \dd)\leq  c \omega(\zeta_0, E, S_{\theta, \delta}).
\end{equation}
\end{lem}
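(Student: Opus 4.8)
The plan is to prove both inequalities in \eqref{eq:harmonic} by comparing harmonic measures on the two domains $S_{\theta,\delta}\subset\dd$ through standard comparison principles for harmonic functions. Throughout I fix $\zeta_0 = 1-\delta/2$, which lies in $S_{\theta,\delta}$, and I restrict attention to a set $E\subset\fr\dd$ contained in the ``inner'' arc $\{|\arg(z)|\le\theta/2\}$ whose boundary stays well away from the ``corners'' of the sector.

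The left inequality $\omega(\zeta_0,E,S_{\theta,\delta})\le\omega(\zeta_0,E,\dd)$ is immediate from the maximum principle: the function $u(z) = \omega(z,E,S_{\theta,\delta})$ is harmonic on $S_{\theta,\delta}$, has boundary values $\le\mathbf 1_E$ on $\fr\dd\cap\fr S_{\theta,\delta}$ and $0$ on the radial and circular parts of $\fr S_{\theta,\delta}$ that lie inside $\dd$; hence $u\le\omega(\cdot,E,\dd)$ on $\fr S_{\theta,\delta}$ and therefore on all of $S_{\theta,\delta}$, in particular at $\zeta_0$. (Probabilistically: Brownian motion from $\zeta_0$ that exits $S_{\theta,\delta}$ through $E$ is one of the ways of exiting $\dd$ through $E$.)

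For the right inequality I would use the strong Markov property of planar Brownian motion, exactly in the spirit of the proof of Lemma~\ref{lem:potential_disconnected}. A Brownian path from $\zeta_0$ that exits $\dd$ through $E\subset\{|\arg|\le\theta/2\}$ must, just before its final exit, last visit the ``inner boundary'' $\Gamma$ of $S_{\theta,\delta}$ — that is, the part of $\fr S_{\theta,\delta}$ lying inside $\dd$ (the two radial segments $|\arg z| = \theta$ together with the arc $|z| = 1-\delta$) — or else never leave $S_{\theta,\delta}$ at all. Decomposing $\omega(\zeta_0,E,\dd)$ according to this last visit, one gets
\[
\omega(\zeta_0,E,\dd) = \omega(\zeta_0,E,S_{\theta,\delta}) + \int_{\Gamma}\omega(w,E,\dd)\, d\rho(w),
\]
where $\rho$ is the (sub-probability) distribution on $\Gamma$ of the Brownian motion from $\zeta_0$ at the last moment it sits on $\Gamma$ before exiting $\dd$ through $E$. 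So it suffices to bound $\omega(w,E,\dd)\le C\,\omega(\zeta_0,E,\dd)$ uniformly for $w\in\Gamma$, or even just for $w$ ranging over $\Gamma$ reached in this way. This is a Harnack-type comparison: $\Gamma$ together with $\zeta_0$ lies in a compact subset of $\dd\cup(\fr\dd\setminus\overline E)$ — here the separation between $E\subset\{|\arg|\le\theta/2\}$ and the corners $\{|\arg| = \theta\}$, where $\Gamma$ meets $\fr\dd$, is exactly what is needed — and the function $z\mapsto\omega(z,E,\dd)$ is a positive harmonic function on $\dd$ vanishing continuously on the arc $\fr\dd\setminus\overline E$. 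By the boundary Harnack principle (or simply the explicit Poisson kernel on $\dd$, since the ratio $P_\dd(w,\xi)/P_\dd(\zeta_0,\xi)$ is bounded above uniformly for $w$ in a compact subset of $\dd\cup(\fr\dd\setminus\overline E)$ and $\xi\in\overline E$), there is $C = C(\theta,\delta)$ with $\omega(w,E,\dd)\le C\,\omega(\zeta_0,E,\dd)$ for all such $w$. Plugging this into the displayed decomposition gives $\omega(\zeta_0,E,\dd)\le\omega(\zeta_0,E,S_{\theta,\delta}) + C\,\omega(\zeta_0,E,\dd)\cdot\rho(\Gamma)$; this is not yet of the desired form because $\rho(\Gamma)$ could a priori be close to $1$.

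The fix, and the main technical point, is to iterate: introduce stopping times $T_1<T_1'<T_2<T_2'<\cdots$ where $T_i$ is a return to $\Gamma$ and $T_i'$ a subsequent return to the circle $\{|z| = 1-\delta/2\}$ through $\zeta_0$'s component, exactly as in Lemma~\ref{lem:potential_disconnected}, so that between consecutive visits the path either exits through $E$ directly from within $S_{\theta,\delta}$ (contributing a term comparable to $\omega(\zeta_0,E,S_{\theta,\delta})$ by a Harnack inequality at the fixed compact set $\{|z| = 1-\delta/2\}\cap S_{\theta,\delta}$) or returns to $\Gamma$ with some probability bounded away from $1$. Summing the resulting geometric series expresses $\omega(\zeta_0,E,\dd)$ as a convergent sum of terms each $\le C\,\omega(\zeta_0,E,S_{\theta,\delta})$ with geometrically decaying weights, giving $\omega(\zeta_0,E,\dd)\le c(\theta,\delta)\,\omega(\zeta_0,E,S_{\theta,\delta})$ as claimed. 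I expect the bookkeeping for this iteration — verifying the uniform Harnack constants and the uniform ``escape probability'' bound, so that the geometric series genuinely converges with a constant depending only on $\theta$ and $\delta$ — to be the only real obstacle; everything else is the maximum principle and the explicit Poisson kernel on the disk.
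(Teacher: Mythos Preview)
Your approach is correct in outline but takes a genuinely different route from the paper. The paper gives a three-line argument via conformal mapping: if $\phi\colon\dd\to S_{\theta,\delta}$ is the Riemann map with $\phi(0)=\zeta_0$, then $\omega(\zeta_0,E,S_{\theta,\delta})=\tfrac1{2\pi}|\phi^{-1}(E)|$; since the arc $\{|\arg z|<\theta\}\subset\partial S_{\theta,\delta}$ is real-analytic, Schwarz reflection shows $\phi$ extends biholomorphically across any compact sub-arc, so $\phi$ is bi-Lipschitz on $\phi^{-1}(\{|\arg|\le\theta/2\})$ and $|\phi^{-1}(E)|\asymp|E|$. Combined with Harnack to pass between $0$ and $\zeta_0$ in $\dd$, this gives both inequalities at once. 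What this buys is a pointwise comparison of Poisson densities, $P_{S_{\theta,\delta}}(\zeta_0,\xi)\asymp P_\dd(\zeta_0,\xi)$ for $\xi$ in the inner arc, with no probabilistic bookkeeping.

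Your probabilistic iteration, modeled on Lemma~\ref{lem:potential_disconnected}, can be made to work, but it is more delicate here than there, and your sketch does not fully confront the difference: in Lemma~\ref{lem:potential_disconnected} the inner boundary $\partial\Omega'$ is compact in the ambient domain, so Harnack applies directly at each return; here $\Gamma$ meets $\partial\dd$ at the corners $e^{\pm i\theta}$, so after hitting $\Gamma$ near a corner the path may exit $\dd$ immediately and your ``return to $\{|z|=1-\delta/2\}$'' may never happen. This is not fatal --- such exits are through $\partial\dd\setminus E$ and contribute nothing, and indeed any exit through $E$ forces a re-entry into $S_{\theta,\delta}$ since $E$ lies in the interior of the arc --- but the accounting of excursions near the corners is exactly the content you defer as ``bookkeeping'', and it is more than routine. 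A quicker way to close your argument is to bypass the iteration: from your first decomposition and the Poisson bound $\omega(w,E,\dd)\le M|E|$ on $\Gamma$ you get $\omega(\zeta_0,E,\dd)\le\omega(\zeta_0,E,S_{\theta,\delta})+M|E|$, and then it suffices to prove $\omega(\zeta_0,E,S_{\theta,\delta})\ge m|E|$, i.e.\ a uniform lower bound on $P_{S_{\theta,\delta}}(\zeta_0,\cdot)$ over the inner arc. That lower bound is exactly what the paper's Schwarz reflection gives; it can also be obtained probabilistically by a direct tube estimate, which would keep your argument self-contained without the iteration.
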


\begin{proof}
The first inequality  in~\eqref{eq:harmonic} follows automatically from the fact that $S_{\theta, \delta}\subset \dd$. To prove the second one,  let $\phi:\dd\to S_{\theta, \delta}$ be the uniformisation such that 
$\phi (0) = z_0$ and $\phi'(0)\in \re_+$, which depends only on $(\theta, \delta)$ and 
 extends as a homeomorphism from $\fr\dd$ to $\partial S_{\theta, \delta}$. Then $\omega(z_0, E, S_{\theta, \delta})  = \unsur{2\pi}\abs{\phi\inv(E)}$. 
On the other hand  $\phi\inv \lrpar{\overline {S_{\theta, \delta}} \cap \fr\dd}$  is a closed  circular  
arc $I_0$, 
 and for every sub-arc $I\Subset I_0$, 
  $\phi$ extends by Schwarz reflection  to a biholomorphism in  a neighborhood of $I$. 
  Fix $I  = \phi\inv\lrpar{\set{z\in\fr\dd, \abs{\arg(z)}\leq \theta/2}}$.  Then 
  $\phi: I\to \phi(I) $ distorts lengths by a uniformly bounded amount, so $\abs{\phi\inv(E)}\asymp \abs{E}$, where the implied constant  depends only on $\theta$ and  $\delta$.  
 By the Harnack inequalities, we get that $\abs{E}=2\pi\omega(0, E, \dd)\asymp\omega(\zeta_0, E, \dd)$, and this concludes the proof.
\end{proof}
 
Let $K$ be a full connected and locally connected compact set of $\cc$, containing at least two points. 
Denote by $\phi_K \colon \dd \to K^\complement$ the uniformisation map fixing $\infty$ and tangent to the identity at $\infty$.
Recall that by the Caratheodory theorem, $\phi_K$ extends continuously to a map $\phi_K \colon \overline{\dd} \to K^\complement\cup \partial K$. 
A \emph{crosscut}  of $K$ is an open Jordan arc $C$ in $\cc \setminus K$ such that $\overline{C} = C \cup \{a,b\}$ with $a,b \in \partial K$. 
Note that we allow $a=b$. It follows that  the open set $K^\complement \setminus C$ admits two connected  components, see~\cite[Proposition~2.12]{pommerenke}.

\begin{lem} \label{lem:zdunik2}
Let $K$ be any connected and locally connected full compact subset of the complex plane. 
Let $C$ be any crosscut of $K$, and denote by $W$
the  bounded connected component of $K^\complement \setminus C$. 
Suppose that $\overline{W} \cap \partial K$ is not reduced to a singleton (that is, it is not reduced to 
$\overline C\cap \fr K$). 

Then for any point $z\in W$, and for any  $x \in \partial K\cap \overline{W}\setminus \overline{C}$, there exist a neighborhood $\Omega$ of $x$ and a constant $c>0$ such that 
\[
c\inv \omega(\infty, \cdot, K^\complement) \rest{\Omega}\leq \omega(z, \cdot, W)\rest{\Omega} \leq c\omega(\infty, \cdot, K^\complement)\rest{\Omega}.
 \]
 \end{lem}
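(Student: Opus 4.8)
The plan is to compare the harmonic measure $\omega(z,\cdot,W)$ with the full harmonic measure $\omega(\infty,\cdot,K^\complement)$ near a boundary point $x\in\fr K\cap\overline W\setminus\overline C$ by transporting everything to the disk via the Carathéodory uniformization $\phi_K\colon\overline\dd\to\overline{K^\complement}$. First I would use local connectedness and the Carathéodory extension: since $x$ is accessible and lies in $\overline W$ but not on the crosscut $C$, the fiber $\phi_K^{-1}(x)\subset\fr\dd$ is a nonempty compact set, and I can find a sub-arc $J_0\subset\fr\dd$ and a sector-like region in $\dd$ abutting $J_0$ whose image under $\phi_K$ is contained in $W$ and contains a neighborhood $\Omega$ of $x$ in $\overline{K^\complement}$. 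The point is that $\phi_K^{-1}(W)$ is an open subset of $\dd$ whose boundary meets $\fr\dd$ in an arc containing a neighborhood of $\phi_K^{-1}(x)$ in $\fr\dd$ (the crosscut $C$ lifts to a crosscut of $\dd$, and $x\notin\overline C$ keeps us away from its endpoints). After a conformal change of chart sending a point of $\phi_K^{-1}(x)$ to $0$ and flattening $\fr\dd$, this region contains a genuine sector $S_{\theta,\delta}$ of the type considered in Lemma~\ref{lem:sector}.

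Next I would run the comparison in two stages. On one hand, $\omega(\infty,\cdot,K^\complement)$ pulls back under $\phi_K$ to normalized arclength on $\fr\dd$ (this is exactly the statement that $\mu_K=\omega(\infty,\cdot,K^\complement)$ corresponds to Lebesgue measure on the circle under the Riemann map), so on the arc $J_0$ it is comparable, with constants depending only on the chart, to $\omega(\zeta_0,\cdot,\dd)$ for a suitable interior reference point $\zeta_0$; this uses the Harnack inequality to move the basepoint from $0$ to $\zeta_0$. On the other hand, $\phi_K$ restricted to $\phi_K^{-1}(W)$ is a conformal bijection onto $W$, so by conformal invariance of harmonic measure $\omega(z,\cdot,W)$ corresponds to $\omega(\phi_K^{-1}(z),\cdot,\phi_K^{-1}(W))$, and after the chart change this is $\omega$ of a domain squeezed between the sector $S_{\theta,\delta}$ and the disk $\dd$ near the relevant boundary arc. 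Lemma~\ref{lem:sector} then gives the two-sided comparison $\omega(\zeta_0,E,S_{\theta,\delta})\asymp\omega(\zeta_0,E,\dd)$ for $E$ in the smaller arc, and a further Harnack step (plus the fact that $\phi_K^{-1}(W)$ contains $S_{\theta,\delta}$ and is contained in $\dd$, so monotonicity of harmonic measure pins $\omega(\cdot,\phi_K^{-1}(W))$ between the two) lets me sandwich $\omega(z,\cdot,W)$ between $c^{-1}$ and $c$ times $\omega(\infty,\cdot,K^\complement)$ on $\Omega$. One should shrink $\Omega$ so that $\phi_K^{-1}(\Omega)$ stays inside the image of the sector's boundary arc, which is where the hypothesis $x\notin\overline C$ is used to keep a definite distance from the lifted crosscut.

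The main obstacle, I expect, is the purely topological bookkeeping near the prime-end structure: justifying that a full neighborhood $\Omega$ of $x$ in $\overline{K^\complement}$ really is captured by a single sector inside $\phi_K^{-1}(W)$, rather than being split among several lifts of $x$ or leaking across the crosscut. Concretely, the fiber $\phi_K^{-1}(x)$ may be a nondegenerate arc or a Cantor-type set, and different prime ends over $x$ may lie on different sides of the lifted crosscut; the assumption $x\in\overline W\setminus\overline C$ and the fact that $W$ is one of the two components of $K^\complement\setminus C$ must be combined to show that at least one prime end over $x$ sits in the interior of $\phi_K^{-1}(W)$ with a sector-shaped approach region there, and that the harmonic measure from $z\in W$ does not ``see'' $x$ through any other route. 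Once this local picture is set up cleanly, the estimates themselves are a routine assembly of Harnack, conformal invariance, monotonicity, and Lemma~\ref{lem:sector}. I would also remark that the constant $c$ is allowed to depend on $z$, $x$, $K$, and $C$, which is harmless since all later applications only need comparability with some constant; this frees me from tracking uniformity through the Harnack steps.
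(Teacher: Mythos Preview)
Your proposal is correct and follows essentially the same route as the paper: lift everything to $\dd$ via $\phi_K$, locate a sector $S_{\theta,\delta}\subset\hat W:=\phi_K^{-1}(W)$ abutting an arc of $\partial\dd$ containing $\phi_K^{-1}(x)$, and then combine Lemma~\ref{lem:sector} with the chain of inclusions $S_{\theta,\delta}\subset\hat W\subset\dd$ (monotonicity), conformal invariance, and Harnack to move basepoints. The paper disposes of the prime-end bookkeeping you flag by simply asserting $\phi_K^{-1}(x)\subset I$ (the arc of $\partial\dd$ bounding $\hat W$) and then shrinking $\Omega$ until $\overline{\phi_K^{-1}(\Omega)}$ lies in a fixed sub-arc $A\Subset I$; your extra caution about multiple preimages is reasonable but does not alter the structure of the argument.
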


\begin{proof}
Lift $C$ to    $\hat{C}:=\phi_K^{-1}(C)\subset\dd$. This is an arc in $\dd$ whose 
  closure in $\overline{\dd}$ intersects $\fr\dd$  
  in    two points $e^{i\theta_0}$ and $e^{i\theta_1}$.
Let $\hat{W}$ be the connected component of $\hat{C} \setminus \dd$ which is mapped to $W$, 
and denote by $I:=[\theta_0,\theta_1]$ the arc $\mathbb{S}^1 \cap \partial \hat{W}$. 
Observe that $\phi_K(I) = \overline{W} \cap \partial K$.  By assumption, $I$
contains $\phi_K^{-1}(x)$, so $I$ is non trivial  and in particular, 
$\theta_0\neq \theta_1$. 

Rotate the situation so that $1 \in I$ and $\phi_K(1) = x$. Fix a sector 
$S = S_{\theta, \delta}$ such that  $S\subset \hat{W}$, and
$ \phi_K^{-1}(x)\subset (-\theta,\theta)$.
Pick any $\zeta \in S$ and any open arc $A$
satisfying $\phi_K^{-1}(x)\subset A\subset \overline{A} \subset (-\theta,\theta)$. 
Choose $\Omega = D(x,r)$ with $r>0$ small enough so that
the closure of $\phi_K^{-1}(\Omega)$ is included in $A$.

Lemma~\ref{lem:sector} now asserts that  $\omega(\zeta, \cdot, S)\rest{A}\asymp 
\omega(\zeta, \cdot, \dd)\rest{A}$. Pushing forward by $\phi_K$ and using 
  the conformal invariance of harmonic measure (see, e.g.,~\cite[Theorem~7.22]{peres-morters}), we get that 
$\omega(z, \cdot , \phi_K(S))\rest{\Omega}\asymp \omega(z, \cdot , K^\complement)\rest{\Omega}$, where 
$z = \phi_K(\zeta)$. Thus we infer that 
\[
\omega(z, \cdot , W)\rest{\Omega}\leq   \omega(z, \cdot, K^\complement )\rest{\Omega} \leq c 
\omega(z, \cdot , \phi_K(S))\rest{\Omega} \leq c \omega(z, \cdot , W)\rest{\Omega},
\] where the 
first and last inequality follow  directly from the inclusions $W \subset K^\complement$ and 
$\phi_K(S)\subset W$ respectively. Finally by the Harnack inequality, 
   $\omega(z , \cdot, K^\complement )\asymp
\omega(\infty , \cdot, K^\complement )$, and the  proof is complete.
\end{proof}

\subsubsection{Endpoints}\label{subs:endpoints}
Assume that $J_f$ is connected and locally connected, and write $\phi_f= \phi_{K_f}$. 
We say that $x = \phi_f(e^{i\theta})$ is an \emph{endpoint} of $J_f$  if there exists a  sequence 
of  intervals $(\theta_n^1, \theta_n^2)$ in the circle $\mathbb{S}^1$, decreasing to $\set{\theta}$ and 
 such that  for every $n$, $\phi(\theta_n^1)   = \phi(\theta_n^2)$. The following lemma is 
  essentially contained in~\cite{zdunik}. 

 \begin{lem} \label{lem:zdunik3}
Let $f$ is a polynomial with connected and locally connected $J_f$  and such that  
 $J_f$ is neither a Jordan curve nor an  interval. Then 
 endpoints are dense in $J_f$. 
 \end{lem}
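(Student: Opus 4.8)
The plan is to exploit the two key structural features of a connected locally connected polynomial Julia set that is neither a Jordan curve nor an interval: on the one hand the dynamics (the map $\phi_f$ semiconjugates $z \mapsto z^d$ on $\fr\dd$ to $f$ on $J_f$, so the set of endpoints is completely invariant and backward orbits equidistribute), and on the other hand the topology, which forces the existence of at least one nontrivial identification $\phi_f(\theta_1) = \phi_f(\theta_2)$ with $\theta_1 \neq \theta_2$. The reference to~\cite{zdunik} suggests that the statement really reduces to producing one endpoint; density should then follow by spreading it around with the dynamics.

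\textbf{Step 1: there is at least one endpoint.} Since $J_f$ is not a Jordan curve, $\phi_f|_{\fr\dd}$ is not injective, so there exist $\theta \neq \theta'$ with $\phi_f(e^{i\theta}) = \phi_f(e^{i\theta'})$; the corresponding crosscut (together with the two arcs of $\fr\dd$ it cuts off) divides $\dd$ and hence the Fatou set. Because $J_f$ is not an arc, one of the two ``sides'' must contain more boundary than just the identification point, and iterating this cutting procedure — passing to a sequence of crosscuts based at points that are successively identified — one extracts a nested sequence of arcs $(\theta_n^1,\theta_n^2)$ shrinking to a single angle $\theta_0$ with $\phi_f(e^{i\theta_n^1}) = \phi_f(e^{i\theta_n^2})$. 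The point $\phi_f(e^{i\theta_0})$ is then an endpoint by definition. (One must check the arcs can be taken to shrink to a point and not to a nondegenerate arc; this uses that $\phi_f$ is continuous and that distinct prime ends of a nondegenerate subarc of $\fr\dd$ are not all identified, else $J_f$ would locally be an arc.)

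\textbf{Step 2: propagate.} Let $x_0 = \phi_f(e^{i\theta_0})$ be an endpoint. The functional equation $f \circ \phi_f(z) = \phi_f(z^d)$ shows first that $f(x_0)$ is again an endpoint (a witnessing sequence of arcs for $x_0$ maps forward to one for $f(x_0)$ after reparametrizing by $z \mapsto z^d$), and second, and more usefully, that every preimage of $x_0$ under every $f^n$ is an endpoint: an arc $(\theta_n^1,\theta_n^2)$ with $\phi_f(e^{i\theta_n^1})=\phi_f(e^{i\theta_n^2})$ pulls back under a branch of the $d^n$-th root to $d^n$ arcs with the same identification property, each still shrinking to a point. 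Hence the backward orbit $\bigcup_n f^{-n}(x_0)$ consists entirely of endpoints. Since $x_0 \in J_f$ and $J_f$ is the Julia set, this backward orbit is dense in $J_f$, and therefore so is the set of endpoints.

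\textbf{Main obstacle.} The delicate point is Step 1, specifically ruling out that every nontrivial identification of $\phi_f$ produces, after the cutting procedure, a limiting arc rather than a limiting point — i.e.\ genuinely deducing from ``$J_f$ is not a Jordan curve and not an arc'' that a \emph{degenerate} crosscut limit exists. This is exactly the combinatorial/topological content isolated in~\cite{zdunik}: one argues that if no angle were an endpoint, then the lamination of identifications on $\fr\dd$ would be so rigid that $J_f$ would be forced to be either a single Jordan curve (no identifications survive in the limit) or a dendrite collapsing onto an interval. I would handle this by invoking the structure of the Thurston lamination associated to $\phi_f$ and the fact, from~\cite{zdunik}, that the set of biaccessible (equivalently here, non-endpoint-type) points carries zero harmonic measure unless $J_f$ is an interval or a circle — combined with density of backward orbits, this both yields the existence of an endpoint and, re-run as in Step 2, its density.
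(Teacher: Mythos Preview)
Your high-level strategy --- produce endpoints and then spread them by backward iteration --- is exactly the paper's. But both steps, as written, have genuine gaps.

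In Step~2 you claim that every $f^n$-preimage of an endpoint is again an endpoint, because an identification arc $(\theta_k^1,\theta_k^2)$ ``pulls back under a branch of the $d^n$-th root with the same identification property''. This inference requires $f$ to be locally injective near the preimage point: from $f(\phi_f(\eta_k^1)) = f(\phi_f(\eta_k^2))$ you can only conclude $\phi_f(\eta_k^1)=\phi_f(\eta_k^2)$ when that preimage is not critical. So the reduction is not to ``produce one endpoint'' but to ``produce one endpoint outside the forward critical orbit''. The paper achieves this by first constructing a \emph{Cantor set} of endpoints (hence uncountably many, so at least one avoids the countable set $\bigcup_{n\ge1} f^n(\crit(f))$): it uses that, since $f$ is non-integrable, the \emph{gaps} of the Thurston lamination are dense in $\overline\dd$, and then iterates a ``find a gap inside each complementary arc'' construction to build nested dyadic families of arcs whose endpoints are identified and whose intersections give distinct endpoints (they are separated by crosscuts). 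Only after picking one such endpoint off the post-critical orbit does the backward-orbit argument go through.

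Your fallback via Zdunik's theorem also does not work, because the equivalence ``biaccessible $\Leftrightarrow$ not an endpoint'' is false in general. When $K_f$ has a bounded Fatou component $U$, a generic point of $\partial U$ away from any pinch point is uniaccessible from infinity but is \emph{not} an endpoint in the paper's sense: locally $J_f$ is a Jordan arc there, with no identifications of external angles nearby, so no shrinking sequence of arcs with identified endpoints exists. Thus Zdunik's zero-measure result for biaccessible points gives you an abundance of uniaccessible points, not of endpoints. The substantive combinatorial input you are missing is precisely the density-of-gaps argument the paper supplies.
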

 
 \begin{rmk}\label{rmk:crosscut} If $x\in J_f$ is an endpoint, then for any $\delta>0$, 
 for large enough $n$ the  image $C$ under $\phi_f$ of the 
 chord joining $\theta_n$ to $\theta'_n$ 
 is a crosscut of $K_f^\complement$ contained in $D(x,\delta)$. 
 \end{rmk}
  
 \begin{proof} 
Recall that the identification of external angles in  $K_f$ can be encoded in the so-called Thurston lamination $\mathcal{L}_f$, 
which is the  lamination by hyperbolic geodesics in $\fr \dd$  such that a leaf joins $\theta$ and $\theta'$ in 
$\mathbb{S}^1$ whenever  $\phi_f(e^{i\theta}) = \phi_f(e^{i\theta'})$ (see \cite[Appendix]{thurston} for details). When 
$J_f$ is not a Jordan curve, this lamination is non trivial, so by backward invariance 
the set of endpoints of leaves of $\mathcal{L}_f$ is dense in $\mathbb{S}^1$. A \emph{gap} of this lamination 
is the closure of a connected component of $\dd\setminus \supp(\mathcal{L}_f)$. A gap $P$ is the closure of the (hyperbolic)
convex hull of its intersection with  $\fr\dd$, and if we write $\partial P = P \cap \fr\dd$, then $\fr\dd \setminus \partial P$ has at 
 least three connected components. 
Since
$f$ is not integrable, it follows from~\cite[Proposition II.6.1]{thurston} (see also the discussion in~\cite[\S 4]{luo})
that the union of gaps is dense in $\overline{\dd}$.

Pick any gap $P$. Since the leaves of the lamination do not cross, for each connected component $I$ 
of $\fr\dd \setminus \partial P$ we can find a gap $P_{I}$ contained in the convex hull of $I$, and $I\setminus \fr P_I$ admits at least $4$ components, so two them, say $I_1$ and $I_2$ satisfy $\abs{{I_j}}\leq \abs{I}/2$. Proceeding inductively, we construct a sequence of disjoint intervals $I_{\e_1\cdots \e_n}$, $\e_i\in \set{1,2}$ such that $\abs{ I_{\e_1\cdots \e_n}}\leq 2^{-n}$. Note that if 
$\theta$ is the decreasing intersection of such a sequence of intervals, then $\phi_f(e^{i\theta})$ is an endpoint. Furthermore by construction these endpoints are separated by crosscuts so they are disjoint. 

This argument thus produces a Cantor set, hence an uncountable set,  of endpoints. In particular there exists an endpoint $x$ 
which does not lie in the post-critical set of $f$, so every preimage of $x$ is an endpoint, and we conclude that endpoints are dense in $J_f$. 
\end{proof}

\subsubsection{Conclusion of the proof of  Theorem~\ref{thm:harmonic_measure} when $J_f$ is not a Jordan curve} \label{sec:no-jordan}
   Let $\sigma$ be as in the statement of the theorem
and reduce $U$ if necessary so that 
$\sigma$ is a biholomorphism in a  neighborhood of  $\overline U$. 
Since by assumption $f_1$ and $f_2$ are not integrable, their Julia set cannot be an interval.

Suppose $J_1$ is not a Jordan curve. Then by Lemma~\ref{lem:zdunik3} (see also Remark~\ref{rmk:crosscut})
there exists $a\in J_1\cap U$ and  a   crosscut $C$ of $K_1^\complement$ with 
$\overline{C} \cap K_1 = \{a\}$ (so that $C$ is a Jordan curve),  and $C\subset U$. Let $W$ be the bounded component of $K_1^\complement \setminus C$ and 
pick $z\in W$. Then $\sigma(C)$ is a crosscut of $J_2$
with $\sigma(\overline{C}) \cap J_2 = \{\sigma(a)\}$. Now by the maximum principle $\sigma(C)$ must be contained in 
$K_2^\complement$ otherwise its interior would be disjoint from $J_2$, which is contradictory.  
By applying Lemma~\ref{lem:zdunik2} to $z$ in $W$ and $\sigma(z)$ in $\sigma(W)$, and using the conformal invariance of harmonic measure,
we conclude that  $\mu_1\asymp \sigma^\varstar\mu_2$ on some open set $\Omega$ 
intersecting $J_1$, 
as was to be shown. \qed

\subsubsection{The case of Jordan curves}\label{subsub:jordan}
Assume now that $J_1$ is a Jordan curve. 
Then $J_1$ has no endpoints in $U$, so that $J_2$
has no endpoints either in $\sigma(U)$.
It follows from Lemma~\ref{lem:zdunik3} that $J_2$ is a Jordan curve as well.

Take a crosscut $C$ of $J_1$ in $U$ such that $\overline{C} \cap J_1$ consists now of two distinct points, and 
denote by $W$ the bounded connected component of $K_1^\complement \setminus C$. 
When $\sigma(W)\subset K_2^\complement$, then the arguments of \S \ref{sec:no-jordan} applies ad litteram.
Proposition~\ref{prop:inside_out} below proves that the possibility that $\sigma(W)\subset K_2$ does not 
occur.
This finishes the proof. 
\qed

 \section{Preserving the sides of a Jordan curve Julia set} \label{sec:jordan}

\begin{prop}\label{prop:inside_out}
Let $f_1$ and $f_2$ be non-integrable polynomials such that $J_1$ is a Jordan curve. Let $U$ be a connected  
open subset intersecting $J_1$ and $\sigma$ be a biholomorphism defined 
on $U$ such that $\sigma(U\cap J_1) = \sigma(U)\cap J_2$. Then $J_2$ is a Jordan curve and $\sigma$ maps  
$K_1\cap U$  
 to $K_2\cap \sigma(U)$.    
\end{prop}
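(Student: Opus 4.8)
The plan is to prove Proposition~\ref{prop:inside_out} by contradiction: suppose $\sigma$ is a biholomorphism on $U$ with $\sigma(U\cap J_1)=\sigma(U)\cap J_2$ but, instead of mapping the inside to the inside, it maps $K_1\cap U$ into the \emph{unbounded} component $K_2^\complement$ (and hence $K_1^\complement \cap U$ into $K_2$, after possibly shrinking $U$). Note first that $J_2$ is automatically a Jordan curve, since $J_1$ having no endpoints in $U$ forces $J_2$ to have no endpoints in $\sigma(U)$, and then Lemma~\ref{lem:zdunik3} applies. The key point is then to derive a contradiction from the fact that, in this scenario, $\sigma$ would identify a small arc of $J_1$ seen \emph{from inside} $K_1$ with a small arc of $J_2$ seen \emph{from outside} $K_2$, i.e.\ from the side carrying $\mu_2=\omega(\infty,\cdot,K_2^\complement)$.

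First I would set up the two harmonic measures to compare. On the $J_1$-side, I take a crosscut $C$ of $K_1$ in $U$ with $\overline C\cap J_1=\{a,b\}$ two distinct points, let $W\subset K_1^\complement$ be the bounded complementary component, and note that $\overline W\cap J_1$ is a non-degenerate subarc of the Jordan curve $J_1$; Lemma~\ref{lem:zdunik2} then gives, near a suitable $x\in J_1\cap\overline W$, that $\omega(z,\cdot,W)\asymp \omega(\infty,\cdot,K_1^\complement)=\mu_1$ locally. This is the harmonic measure of $J_1$ \emph{from the outside} of $K_1$. Under the (bad) assumption $\sigma(W)\subset K_2$, the open set $\sigma(W)$ is a Jordan domain contained in the \emph{filled} Julia set $K_2$, with $\sigma(\overline W)\cap J_2$ a non-degenerate arc of $J_2$. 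So $\sigma$ conformally identifies $\omega(\cdot,\cdot,W)$ with $\omega(\cdot,\cdot,\sigma(W))$, which is harmonic measure of (a piece of) $J_2$ seen \emph{from the bounded side}, i.e.\ from inside $K_2$. Now I would invoke the same localization machinery on the $\sigma(W)\subset K_2$ side: since $K_2$ is a full connected locally connected compact set and $\sigma(W)$ is bounded by a crosscut of its complement in $\cc\setminus J_2^{\mathrm{bounded side}}$... — more carefully, $\mathrm{int}(K_2)$ is a union of Fatou components, and $\sigma(W)$ lies in one of them; I compare $\omega(\cdot,\cdot,\sigma(W))$ near $\sigma(x)$ with the harmonic measure of that Fatou component $\Omega_2$ viewed from an interior point.

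The contradiction comes from combining this with the previous sections: on the one hand Theorem~\ref{thm:harmonic_measure}'s ingredients force $\mu_1\asymp\sigma^\varstar\mu_2$ only if $\sigma$ respects sides; on the other hand, if $\sigma$ flips sides then $\mu_1$ would be locally comparable to the harmonic measure of an interior Fatou component of $K_2$ pulled back by $\sigma$. The heart of the matter is then: \emph{the harmonic measure of $K_2$ from infinity and the harmonic measure of a bounded Fatou component of $K_2$ along their common boundary arc are mutually singular} when $J_2$ is a non-smooth quasicircle — precisely the phenomenon referenced after the statement of \S\ref{subsub:jordan} via \cite[Theorem~VI.6.3]{garnett-marshall}. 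Hence $\sigma$ would force $\mu_1$ to be \emph{both} (locally, up to bounded factors) equal to itself via the outside identification, \emph{and} absolutely continuous with respect to a mutually singular measure — impossible unless $J_2$ is smooth, which is excluded since $f_2$ is non-integrable. To make this airtight I would instead argue more intrinsically: use that $\mu_1$ is $f_1$-balanced and has Hölder continuous potential, so it is doubling / satisfies the dimension estimates of TCE-type only in special cases, whereas the harmonic measure of an \emph{inner} Fatou component of a polynomial (conjugate to a Blaschke product, so to a power map on a disk in the simplest case) has exact dimension $1$ at the right points by Makarov's theorem — but the outer harmonic measure $\mu_2=\mu_{f_2}$ of a non-smooth $J_2$ has dimension strictly less than $1$ in the right sense, again by Makarov/Zdunik. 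The numerical invariant that distinguishes the two sides is the dimension (or the integral means spectrum) of harmonic measure: $\dim\omega_{\mathrm{out}}=\dim\mu_{f_2}<1<\dim\omega_{\mathrm{in}}$ is impossible, forcing $1=\dim$ and smoothness.

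The main obstacle, and where the real work lies, is making the comparison with the \emph{interior} harmonic measure rigorous and then extracting the side-distinguishing invariant: one must (i) show $\sigma(W)$ genuinely lands inside a single bounded Fatou component $\Omega_2$ of $f_2$ and that $\Omega_2$ has locally connected boundary so Lemma~\ref{lem:zdunik2}-type arguments apply on that side too (this uses that TCE or the hypotheses of Theorem~\ref{thm:harmonic_measure} give local connectivity of $J_2$, and that bounded Fatou components of polynomials with locally connected Julia set are Jordan domains), and (ii) quote or prove the precise statement that the two harmonic measures along the shared arc are mutually singular for non-smooth $J_2$, citing \cite[Theorem~VI.6.3]{garnett-marshall} together with the fact (Fatou, \S\ref{subs:levin}) that a non-integrable polynomial has non-smooth Julia set so $J_2$ is nowhere a smooth arc. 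Once those are in place, $\sigma^\varstar\mu_2\asymp\mu_1\asymp\omega(\cdot,\cdot,W)$ combined with $\omega(\cdot,\cdot,\sigma(W))\perp\sigma^\varstar\mu_2$ locally yields the contradiction, completing the proof that $\sigma(W)\subset K_2^\complement$, i.e.\ $\sigma(K_1\cap U)=K_2\cap\sigma(U)$.
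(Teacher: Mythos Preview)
Your proposal has a genuine gap: neither the mutual-singularity argument nor the dimension argument closes.

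Suppose $\sigma$ flips sides. Your chain of comparisons correctly gives (locally) $\sigma_\varstar\mu_1 \asymp \omega_{\mathrm{in},2}$ and, symmetrically, $\sigma_\varstar\omega_{\mathrm{in},1} \asymp \mu_2$, where $\omega_{\mathrm{in},i}$ is the inner harmonic measure of $J_i$. Combined with $\mu_i \perp \omega_{\mathrm{in},i}$ on each side, this is \emph{entirely consistent}: $\sigma$ simply exchanges a pair of mutually singular measures with another such pair. No contradiction arises. Your final line asserts $\sigma^\varstar\mu_2\asymp\mu_1$, but that is precisely what Theorem~\ref{thm:harmonic_measure} is establishing \emph{via} this proposition, so invoking it here is circular.

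The dimension argument also fails: both the inner and the outer harmonic measures of a Jordan curve have Hausdorff dimension exactly $1$, by Makarov's theorem (each is the harmonic measure of a simply connected domain). The claim $\dim\mu_{f_2}<1$ is false; Zdunik's theorem gives $\dim\mu_{f_2}<\dim J_{f_2}$ in the non-integrable case, which is not the same thing. So no conformal invariant of harmonic measure alone distinguishes the two sides of a non-smooth quasicircle.

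The paper's proof is entirely different and brings in the dynamics. After reducing to the hyperbolic case (parabolic cusps point inward, which already rules out a flip), one uniformizes $\Int(K_1)$ and $\pu\setminus K_2$ by $\dd$: the interior model of $f_1$ is a hyperbolic Blaschke product $g_1$, and the exterior model of $f_2$ is $M_{d_2}(z)=z^{d_2}$. A uniform version of Levin's finiteness theorem (Lemma~\ref{lem:uniform1}) shows that for \emph{every} periodic point of $g_1$ of period $k_1$ there is a local conjugacy $\tau\circ g_1^{k_1 b}=M_{d_2}^{b'}\circ\tau$ with $b\le B$ independent of the point. Hence the Lyapunov exponents of periodic orbits of $g_1$ lie in the discrete set $\bigcup_{1\le b\le B}\frac{\nn}{b}\log d_2$. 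A separate rigidity statement (specification plus the dimension formula) shows that a hyperbolic Blaschke product whose periodic Lyapunov spectrum is discrete must be M\"obius conjugate to $z^{d_1}$; this makes $f_1$ integrable, a contradiction. The side-distinguishing invariant is thus the \emph{dynamical} Lyapunov spectrum of the Blaschke model, not any measure-theoretic property of harmonic measure.
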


\begin{proof} We already saw in \S\ref{subsub:jordan}
that under the assumptions of the proposition,  $J_2$ is a Jordan curve, too. 
We argue by contradiction and assume that $\sigma$ flips the interior of $J_1$ to the exterior of $J_2$. 

\noindent{\bf Step 1:} reduction to the hyperbolic case.  

There is no critical point of $f_1$ on $J_1$, otherwise by pulling back a neighborhood of the 
corresponding critical value, $J_1$ would have several branches at the critical point, and would not be a Jordan curve. The same applies to $J_2$. Thus the 
unique bounded Fatou component of $f_1$ (resp. $f_2$) must be the basin of an attracting or parabolic fixed point.

If $f_1$ has an attracting point, then it attracts all critical points, and 
 we conclude that $f_1$ is hyperbolic. 
In particular $J_1$ is a quasi-circle, and so is $J_2$. In particular $J_2$ has no cusps, so $f_2$ has no parabolic points, and we infer that $f_2$ is hyperbolic as well.  

If $f_1$ has a parabolic fixed point, then $J_1$ admits a dense set of cusps, 
hence $J_2$ cannot be a quasi-circle. It follows that $f_2$ is not hyperbolic, hence $J_2$
admits a dense set of cusps as well.  
Since all critical points of $f_1$ belong to $\Int(K_1)$, \cite[Theorem~3]{baker-eremenko} implies that 
all cusps are preimages of the cusp at the parabolic fixed point $p$. 
From this and the fact that all critical points are attracted to $p$, all cusps of $J_1$
point inwards, that is, towards  the interior of $K_1$. The same must be true of $J_2$.   However, 
our assumption on   $\sigma$ 
implies that the cusps of $J_2$   should  point  outwards,
which is a contradiction. Therefore $f_1$ and $f_2$ are hyperbolic.

\medskip 

\noindent{\bf Step 2:} using  the  uniform Levin theorem. 

A straightforward compactness argument yields the following uniformity statement in    
 Levin's Theorem: 
\begin{cor}[{of Theorem~\ref{thm:levin}}] \label{cor:uniform_levin1}
Let $f$ be a   rational map with a non-smooth Julia set. Fix $r>0$ and 
$r_1<r_2$. Then there exists  
 $M = M(f,r,r_1,r_2)$ such that for every $x\in J_f$ there are at most $M$ local symmetries $\sigma$ of 
$J_f$ defined in $B(x,2r)$ and such that 
\begin{equation}\label{eq:uniform_levin1}
r_1\leq \diam(\sigma(B(x,  r)))   \leq r_2.
\end{equation}
\end{cor}

For local biholomorphisms between Julia sets, this yields:
\begin{lem} \label{lem:uniform_levin2}
Let $f_1$ and $f_2$  be    rational maps with   non-smooth Julia sets. Then, given 
  $r>0$ and $0<c_1<c_2<1$, there exists  
 $M' = M'(f_1, f_2 ,r,c_1,c_2)$ such that for every $x\in J_1$ there are at most $M'$ local biholomorphisms 
  $\sigma$  defined in  $B(x,2r)$, mapping $J_1$ to $J_2$, and such that 
\begin{equation}\label{eq:uniform_levin2}
c_1\leq \diam(\sigma(B(x,r)))   \leq c_2.
\end{equation}
\end{lem}

\begin{proof}
By the Koebe Distortion Theorem, there exist  $r_1$ and $r_2$  depending only on $r$, $c_1$ and $c_2$ such that for any $\sigma$ as in the statement of the lemma 
 $$B(\sigma(x), 2r_1)\subset  \sigma(B(x,r)) \subset{B(\sigma(x), r_2)}. $$ Likewise, there exists 
 $r_3 = r_3(r, c_1, c_2)$ such that  $\sigma\inv(B(\sigma(x), r_1)) \supset B(x, r_3)$ and finally there exists 
 $r_4 = r_4 (r, c_1, c_2)$ such that $\sigma(B(x, r_3))\supset B(\sigma(x), r_4)$. Now fix such a local biholomorphism $\sigma_0$ and let $y = \sigma(x)\in J_2$. For any other such $\sigma$, we infer that $\tau = \sigma\circ \sigma_0\inv$ is a symmetry of $J_2$, defined in $B(y, 2r_1)$, and satisfying
 $$B(\tau(y), r_4)\subset \tau(B(y,r_1))\subset B(\tau(y), r_2),$$ and from Corollary~\ref{cor:uniform_levin1} we conclude that there are only $M' = M'(f_2, r_1, r_4, 2r_2)$ such maps  $\tau$, and we are done. 
 \end{proof}

 The next result plays a key role in our argument. 
 \begin{lem}\label{lem:uniform1}
Let $f_1$ and $f_2$ be non-integrable hyperbolic polynomials such that $J_1$ and $J_2$ are Jordan curves. Let $U$ be a connected  
open subset intersecting $J_1$ and $\sigma$ be a biholomorphism defined 
on $U$ such that $\sigma(U\cap J_1) = \sigma(U)\cap J_2$.

There exists a constant $B$ depending only on $f_1$, $f_2$ and $\sigma$ such that the following holds. 
For any periodic point $p_1\in J_1$ of period $k_1$, 
there exists a local biholomorphism $\tilde\sigma$
defined in a neighborhood of $p_1$ such that $\tilde \sigma(p_1) = p_2$ is periodic under $f_2$ and  
$\tilde\sigma \circ f_1^{k_1b}= f_2^{b'} \circ \tilde\sigma$
for some $b\le B$ and some integer $b'$.
 \end{lem}
 
 \begin{proof}
 Since $f_1$ and $f_2$ are hyperbolic, we may suppose that $|f'_1| >1$ on $J_1$, and
$|f'_2| >1$ on $J_2$. We may also find $r = r(f_1)>0$ such that 
for any point $p\in J_1$ and any $q \in f_1^{-n}(p)$ there
exists a univalent branch $f_{1,-n}$ of $f_1^n$ 
defined in $B(p,2r)$ and mapping $p$ to $q$ (we fix $\rho>0$ such that  the analogous property holds for $f_2$). 

For $p_1$ as in the statement, choose any integer $N\in \nn^*$
and a univalent inverse branch $f_{1,-N}$ of $f_1^N$ defined on $B(p_1,2r)$
with values in $U$. For notational ease we denote by $f_1^{-k_1n}$ the branch of 
$(f_1^{k_1n})\inv$ fixing  $p_1$. 

Write $\kappa = |(f^{k_1})'(p_1)|>1$  and for each $n$ consider the map 
$F_n := \sigma \circ f_{1,-N}\circ f_1^{-k_1n}$. This map is defined
on $B(p_1,2r)$, and by the Koebe Distortion Theorem, it satisfies
\[
B(F_n(p_1), c'_1 \kappa^n)
\subset
F_n(B(p_1,r))
\subset
B(F_n(p_1), c'_2 \kappa^n)
\]
for some uniform constants $c'_1 < c'_2$ depending only on $f_1$ and $\sigma$.

For any $n$, let $m= m_n$ be the largest integer such that
the diameter of $f_2^m (F_n(B(p_1,r))$ is bounded by $\rho$.
Note that $n\mapsto m_n$ is non-decreasing. 
 Write $q_m= f_2^m (F_n(p_1))$.
Let $f_{2,-m}$ be the univalent branch defined on $B(q_m, 2\rho)$ 
and mapping $q_m$ to $F_n(p_1)$. 
Set $L=\sup_{J_2} |f_2'|$.  
By Koebe distortion, we get
\[
f_{2,-m} \lrpar{B\lrpar{q_m,\frac{\rho}{L}}} \supset B\left(F_n(p_1), \frac{\rho}{4L|(f_2^m)'(F_n(p_1))|}\right)~.
\]
Now observe that by maximality of $m$,  $f_2^m(F_n(B(p_1,r)))$ is not included in $B(q_m,\rho/L)$
so that
$\frac{\rho}{4L|(f_2^m)'(F_n(p_1))|} \le c'_2 \kappa^n$. Thus by applying Koebe to $f_2^m$ on the disk $B(F_n(p_1), c'_1\kappa^n)$, 
we conclude that $\tau_n := f_2^m \circ F_n$ is a sequence of univalent maps
defined on $B(p_1,2r)$ such that
\[
\frac{c'_1}{16 Lc'_2} \rho
\le
\diam (\tau_n (B(p_1,r)))
\le
\rho,
\]
and $\sigma_n(J_1\cap B(p_1,2r)) \subset J_2$. 

Lemma~\ref{lem:uniform_levin2} yields an integer $B$ depending only of $f_1$, $f_2$ and $\sigma$, and a pair of integers $0< n < n' \le B$ such that  
$\tau_n = \tau_{n'}$. Expanding this equality gives 
$$f_2^{m_n} \circ \sigma \circ f_{1,-N}\circ f_1^{-k_1n} = 
f_2^{m_{n'}} \circ \sigma \circ f_{1,-N}\circ f_1^{-k_1n'},  $$
that is, 
$$\tilde \sigma = f_2^{m_{n'}- m_n} \circ \tilde \sigma \circ f_1^{-k_1(n'-n)} \text{, where } 
\tilde \sigma =f_2^{m_n} \circ \sigma \circ f_{1,-N}\circ f_1^{-k_1n},$$ and the result follows.
 \end{proof}

\begin{rmk}
By pushing the argument further it is possible to prove that $b'\leq k_1 B'$ for some uniform $B'$.  
\end{rmk}

\noindent{\bf Step 3:} multipliers and smooth rigidity for  expanding maps on the circle. 

In this paragraph we  
prove some rigidity results  for Blaschke products based on periodic points multipliers. 
If $p$ is a periodic point of period $n$, its Lyapunov exponent is by definition $\unsur{n}\log\abs{(f^n)'(p)}$. 
 The following lemma is presumably  well-known. We provide a proof for the convenience of the reader.

\begin{lem}\label{lem:rigidity1}
Let $g$ be a uniformly hyperbolic  Blaschke product   of degree $d$  
such that $J_g = \fr\dd$.
Assume that  the Lyapunov exponents of periodic points of $g$ take only one value $\log e$. 
Then $e=d$ and  $g$ is   conjugate to $z\mapsto z^d$ by a Möbius transformation. 
\end{lem}
 
 \begin{proof}
 Since $g$ is uniformly expanding, any ergodic invariant measure   has a positive Lyapunov exponent. 
By approximating it by periodic orbits we infer that 
this  Lyapunov exponent   is equal to $\log e$.   
Applying  the dimension formula (see e.g.~\cite[Theorem~11.4.1]{pu_book}) to the unique measure of maximal entropy $\mu_g$, 
we get that 
\[1\geq \mathrm{HD}(\mu_g) = \frac{h_{\mu_g}(g)}{\chi_{\mu_g}(g)} = \frac{\log d}{\log e}, \text{ hence } \log e\geq \log d.\]
Likewise, applying it to the unique  smooth invariant measure $\nu$, we get 
 $$1= \mathrm{HD}(\nu)  = \frac{h_{\nu}(g)}{\chi_{\nu}(g)} \leq \frac{\log d}{\log e}, \text{ hence } \log e\leq \log d.$$
 From this we conclude that $e=d$ and that $\nu$ is the measure of maximal entropy.
Then, if we let $h$ be the conjugacy between $g$ and $M_d$, $h\circ g = M_d\circ h$,  by uniqueness of the maximal entropy 
 measure, we infer that $h_\varstar \nu = \leb_{\fr \dd}$, so $h$ is smooth.
 Finally, by \cite[Theorem~4]{shub-sullivan_circle},  $g\rest{\fr\dd}$ is conjugate to $z^{d}$ by a Möbius transformation, 
and the proof is complete. 
  \end{proof}

\begin{lem}\label{lem:specification1}
For any uniformly expanding $C^{1}$-map of the circle, the closure of the set of Lyapunov exponents of periodic orbits 
is an interval.
\end{lem}

From the two previous lemmas we immediately get:

\begin{cor}\label{cor:specification_rigidity1}
Suppose $g$ is a uniformly hyperbolic  Blaschke product of degree $d$ whose set of Lyapunov exponents of periodic orbits is discrete. 
Then $g$ is Möbius conjugate to $z\mapsto z^d$. 
\end{cor}

\begin{proof}[Proof of Lemma~\ref{lem:specification1}] 
It is enough to show that if there are two periodic orbits $x_1$ of period $n_1$, and  $x_2$ of period $n_2$, of
respective Lyapunov exponents $\chi_1$ and $\chi_2$, then there is a 
periodic orbit whose Lyapunov exponent is approximately $\unsur{2}(\chi_1+\chi_2)$.  This follows
from the periodic specification property, which holds for any expanding map of the circle.

Choose $\eta >0$, and pick 
$\e>0$ so small     that  
\[\chi_1- \eta \le \frac1{n_1} \log \abs{ (f^{n_1})'(x) } \le \chi_1+ \eta\] (resp.  
 $\chi_2- \eta \le\frac1{n_2} \log \abs{ (f^{n_2})'(x) }\le \chi_2+ \eta$)
for any $x \in \fr\dd$ such that $d(x,x_1) \le \e$ (resp. $d(x,x_2)\le \e$).

By the periodic specification property, there exists an integer $M\ge 1$ such that for any $q, q'\ge 1$ 
there is a periodic orbit $x, f(x), \cdots, f^{qn_1+q'n_2+2M}(x) = x$ such that 
$d(f^k(x),f^k(x_1)) \le \e$ for all $ 0 \le k \le q n_1-1$, and 
$d(f^{k+qn_1+M}(x),f^k(x_2)) \le \e$ for all $ 0 \le k \le q'n_2-1$. 

The Lyapunov exponent of $x$ then satisfies:
\begin{align*}
\chi &\le  \frac1{qn_1+q'n_2+2M} \left(n_1q (\chi_1+\eta)+ n_2q' (\chi_2+\eta)+  (2M) \log (\sup |f'|)\right)
\\
\chi &\ge  \frac1{qn_1+q'n_2+2M} \left(n_1q (\chi_1-\eta)+ n_2q' (\chi_2-\eta) +(2M) \log (\inf |f'|)\right)
\end{align*}
So if we choose $q$ and $q'$ very large compared to $M$ and satisfying $qn_1= q'n_2$,
we conclude that $\chi$ is very close to $\unsur{2}(\chi_1+\chi_2)$, as announced. 
\end{proof}

\medskip

\noindent{\bf Step 4:} uniformization and conclusion.

We return to the original situation. Let $f_1$ and $f_2$ be non-integrable hyperbolic polynomials such that $J_1$ and $J_2$ are Jordan curves. Let $U$ be a connected  
open subset intersecting $J_1$ and $\sigma$ be a biholomorphism defined 
on $U$ such that $\sigma(U\cap J_1) = \sigma(U)\cap J_2$ and
$\sigma(K_1\cap U) \subset K_2^\complement$.

Let $\phi_1\colon\dd\to  \Int(K_1)$ be a uniformization with $\phi_1(0)$ the attracting fixed point of $f_1$, and 
$\phi_2\colon\dd\to \pu\setminus K_2$ be a uniformization with $\phi_2(0) = \infty$. 
Since $J_1$ and $J_2$ are Jordan curves, both $\phi_1$ and $\phi_2$ extend to 
respective homeomorphisms 
$\fr\dd\to J_1$ and $\fr\dd\to J_2$. Actually, since $J_1$ and $J_2$ are quasi-circles, these homeomorphisms are 
bi-Hölder (indeed they are quasi-symmetric, see \cite[Chapter 5]{pommerenke}). Let $g_1 = \phi_1\inv\circ f_1\circ 
\phi_1$ (resp. $g_2 = \phi_2\inv\circ f_2\circ \phi_2$). Up to conjugating by a rotation, $g_2(z)  = M_{d_2}(z) := z^{d_2}$. 
Observe that $g_1(z)$ extends to a Blaschke product of degree $d_1$. Indeed $g_1$ extends by Schwarz 
reflexion to a rational map which satisfies $g_1(\dd) = \dd$ and $g_1(\dd^\complement) \subset \dd^\complement$, 
hence $\dd$ is totally invariant so $g_1$ is a  Blaschke product of the same degree as $f_1$, which is uniformly hyperbolic on $\fr\dd$ because no critical orbit of $g_1$ approaches $\fr\dd$. 

We claim that the set of Lyapunov exponents of periodic orbits of $g_1$ is discrete. Taking this claim for granted,   by Corollary~\ref{cor:specification_rigidity1}, we obtain that $g_1$ is Möbius conjugate
 to $M_{d_1}$
thus $f_1$ admits a totally invariant fixed point in $K_1$, so it is integrable, which is a contradiction.

To justify our claim, we proceed as follows.  Pick any periodic point $q_1 \in \fr\dd$ for $g_1$ of period $k_1$. 
Then $p_1:= \phi_1^{-1}(q_1)$ is $f_1$-periodic of the same period, and Lemma~\ref{lem:uniform1} implies
the existence of a local biholomorphism $\tilde\sigma$ sending $p_1$ to a periodic point $p_2$ for $f_2$ such that 
$\tilde\sigma \circ f_1^{k_1b}= f_2^{b'} \circ \tilde\sigma$ for some $b\le B= B(f_1,f_2,\sigma)$ and some $b'\ge1$.
By the Schwarz reflection principle, the map 
$\tau:=\phi_2\inv \circ \tilde\sigma \circ \phi_1 $ is defined in a neighborhood of $q_1$
and satisfies
$\tau \circ g_1^{k_1b}= g_1^{b'}\circ\tau  = M_{d_2}^{b'} \circ \tau$. Computing derivatives, this
implies that the Lyapunov exponent of $g_1$ at $q_1$
belongs to $\bigcup_{1\le b\le B}\frac{\nn}{b} \log d_2$ which is a discrete set. 

The proof is complete.
\end{proof}


\bibliographystyle{plain}
\bibliography{bib-symmetry}
\end{document}